\documentclass[a4paper, 11pt]{amsart}
\usepackage{amsmath, amssymb, amsthm, mathrsfs}
\usepackage[mathscr]{eucal}
\usepackage[all]{xy}

\setlength{\oddsidemargin}{5mm}
\setlength{\evensidemargin}{5mm}
\setlength{\topmargin}{0mm}
\setlength{\textheight}{230mm}
\setlength{\textwidth}{150mm}

\theoremstyle{plain} 
 \newtheorem{thm}{Theorem}[section]
 \newtheorem{lem}[thm]{Lemma}
 \newtheorem{cor}[thm]{Corollary}
 \newtheorem{prop}[thm]{Proposition}
 
\theoremstyle{definition}
  \newtheorem{defn}[thm]{Definition}
  
  \newtheorem{notation}[thm]{Notation}

\theoremstyle{remark}
  \newtheorem{rem}[thm]{Remark}

\newcommand{\C}{\mathbb{C}}
\newcommand{\cal}{\mathcal}
\newcommand{\N}{\mathbb{N}}
\newcommand{\Z}{\mathbb{Z}}
\newcommand{\calr}{\mathcal{R}}

\newcommand{\calg}{\mathcal{G}}
\newcommand{\calh}{\mathcal{H}}
\newcommand{\cals}{\mathcal{S}}
\newcommand{\cali}{\mathcal{I}}
\newcommand{\sh}{\mathscr{H}}

\newcommand{\scc}{\mathscr{C}}
\newcommand{\sd}{\mathscr{D}}
\newcommand{\se}{\mathscr{E}}
\newcommand{\scb}{\mathscr{B}}

\newcommand{\calq}{\mathcal{Q}}
\newcommand{\sff}{\mathsf{F}}
\newcommand{\ci}[2]{\cite[#1]{#2}}
\renewcommand{\c}{\curvearrowright}

  \makeatletter
  \@addtoreset{equation}{section}
  \makeatother

\begin{document}

\title[Splitting in orbit equivalence]{Splitting in orbit equivalence, treeable groups, and the Haagerup property}
\author{Yoshikata Kida}
\address{Department of Mathematics, Kyoto University, 606-8502 Kyoto, Japan}
\email{kida@math.kyoto-u.ac.jp}
\date{January 29, 2015}
\thanks{This work was supported by JSPS Grant-in-Aid for Young Scientists (B), No.25800063}

\begin{abstract}
Let $G$ be a discrete countable group and $C$ its central subgroup with $G/C$ treeable.
We show that for any treeable action of $G/C$ on a standard probability space $X$, the groupoid $G\ltimes X$ is isomorphic to the direct product of $C$ and
$(G/C)\ltimes X$, through cohomology of groupoids.
We apply this to show that any group in the minimal class of groups containing treeable groups and closed under taking direct products, commensurable groups and central extensions has the Haagerup property.
\end{abstract}

\maketitle


\section{Introduction}\label{sec-intro}

It is an elementary fact that any group extension of a free group splits.
In the framework of measured groupoids, Series \cite{series} observed that any extension of the equivalence relation associated with a free action of a free group splits analogously.
The same splitting therefore holds for any extension of an ergodic treeable equivalence relation of type ${\rm II}_1$, thanks to Gaboriau \cite{gab-cost} and Hjorth \cite{hj}.
A discrete measured equivalence relation is called treeable, roughly speaking, if it has a measurable bundle structure whose fiber is a simplicial tree.
Treeability was introduced by Adams \cite{ad}, and is widely regarded as an analogue of freeness of groups.
In the first half of the paper, we show that the same splitting holds for any central extension of a treeable equivalence relation of type ${\rm II}_1$ which is not necessarily ergodic.
In the second half of the paper, we apply this to show the Haagerup property of certain central extensions of discrete countable groups.

In this introduction, we avoid to precisely define a central extension of an equivalence relation.
We instead state the splitting theorem for the case where the central extension arises from a p.m.p.\ action of a discrete countable group $G$ such that a central subgroup of $G$ acts trivially. 
We mean by a {\it p.m.p.}\ action of a discrete countable group $G$ a measurable action of $G$ on a standard probability space preserving the measure, where ``p.m.p." stands for ``probability-measure-preserving".

\begin{thm}\label{thm-split-groupoid}
Let $1\to C\to G\to \Gamma \to 1$ be an exact sequence of discrete countable groups such that $C$ is central in $G$.
We denote by $q\colon G\to \Gamma$ the quotient map.
Let $\Gamma \c (X, \mu)$ be a free and p.m.p.\ action such that the associated equivalence relation is treeable.
Let $G$ act on $(X, \mu)$ through $q$.
Then there exists an isomorphism of discrete measured groupoids,
\[I\colon C\times (\Gamma \ltimes (X, \mu))\to G\ltimes (X, \mu),\]
such that for any $c\in C$ and a.e.\ $x\in X$, we have $I(c, (e, x))=(c, x)$, where $e$ is the neutral element of $\Gamma$; and for any $g\in G$ and a.e.\ $x\in X$, the coordinate of $I^{-1}(g, x)$ in $\Gamma \ltimes (X, \mu)$ is $(q(g), x)$.
\end{thm}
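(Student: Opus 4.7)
\emph{Plan.} I will construct a Borel groupoid homomorphism $\sigma\colon \Gamma \ltimes X \to G \ltimes X$ splitting the natural surjection $\pi\colon(g,x)\mapsto(q(g),x)$, and then set $I(c,\alpha):=(c,r(\alpha))\cdot\sigma(\alpha)$, where $r$ is the range map of $\Gamma\ltimes X$ and the multiplication takes place in $G\ltimes X$. The two prescribed properties of $I$ are immediate from this formula, and bijectivity follows since $\pi^{-1}(\alpha)$ is a $C$-torsor. The verification that $I$ is a groupoid homomorphism is where the centrality of $C$ enters: for composable $\alpha_1,\alpha_2$, computing $I(c_1,\alpha_1)I(c_2,\alpha_2)$ requires commuting $\sigma(\alpha_1)$ past the central element $(c_2,r(\alpha_2))$, which is legitimate precisely because $C$ is central in $G$.

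\emph{Building $\sigma$ from the treeing.} Fix a Borel treeing $\mathcal{T}$ of the orbit equivalence relation of $\Gamma\c(X,\mu)$, and orient its edges in a Borel manner. Since the action is free, each oriented edge from $x$ to $y$ is of the form $(\delta_e,x)$ for a unique $\delta_e\in\Gamma$, and after fixing a Borel set-theoretic section $\Gamma\to G$ of $q$, each oriented edge receives a canonical lift to an arrow in $G\ltimes X$. For an arbitrary $(\gamma,x)\in\Gamma\ltimes X$ there is a unique reduced path $x=x_0,x_1,\ldots,x_n=\gamma x$ in the $\mathcal{T}$-component of $x$, each $x_{i-1}\to x_i$ being an oriented edge or the reverse of one. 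Define $\sigma(\gamma,x)$ to be the groupoid product in $G\ltimes X$ of the lifts along this path, using inverse arrows for reversed edges. Applying $q$ to each factor recovers $\gamma$, so $\sigma(\gamma,x)$ lies above $(\gamma,x)$.

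\emph{Homomorphism property.} For a composable pair $(\gamma_1,\gamma_2 x)$, $(\gamma_2,x)$, concatenating at $\gamma_2 x$ the reduced paths defining $\sigma(\gamma_1,\gamma_2 x)$ and $\sigma(\gamma_2,x)$ gives a (possibly non-reduced) path from $x$ to $\gamma_1\gamma_2 x$, whose $G\ltimes X$-product is exactly $\sigma(\gamma_1,\gamma_2 x)\sigma(\gamma_2,x)$. Iteratively deleting backtracks reduces it to the unique reduced path, which defines $\sigma(\gamma_1\gamma_2,x)$; each deletion removes a pair of arrows of the form $\alpha,\alpha^{-1}$ and so does not change the product. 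This is the only place where the tree (no-cycle) hypothesis is used.

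\emph{Main obstacle.} The principal technical hurdle is measurability, because the reduced path between $x$ and $\gamma x$ has unbounded combinatorial length as $(\gamma,x)$ varies. The standard remedy is to partition $\Gamma\ltimes X$ according to this path length and to define $\sigma$ Borel on each piece via measurable selectors for the tree components of $\mathcal{T}$. If the paper instead follows the cohomological route suggested by the abstract, the same combinatorial input reappears as the assertion that the class in $H^2(\Gamma\ltimes X,C)$ classifying the extension $G\ltimes X$ is trivialized by a $1$-cochain obtained from the treeing, and the splitting $\sigma$ is recovered from that trivializing cochain.
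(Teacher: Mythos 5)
Your proposal is correct, but it takes a genuinely different route from the paper. The paper deduces Theorem \ref{thm-split-groupoid} from a cohomological splitting criterion: it chooses an arbitrary measurable section $u$ of the quotient map, forms the associated $2$-cocycle $\sigma\in Z^2(\Gamma\ltimes X,C)$, and shows $\sigma$ is a coboundary because $H^2(\mathcal{R},C)=0$ for every p.m.p.\ treeable relation $\mathcal{R}$ (Corollary \ref{cor-treeable}). That vanishing is itself nontrivial: it invokes Gaboriau and Hjorth to replace an \emph{ergodic} treeable relation by one coming from a free action of a free group, Westman's isomorphism $H^n(F\ltimes Y,C)\cong H^n(F,\mathsf{F}(Y,C))$, and then a separate measurable-selection argument (Proposition \ref{prop-comp}, following Fisher--Morris--Whyte) to glue the trivializing cochains over the ergodic decomposition in the non-ergodic case. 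You instead build the splitting homomorphism directly from a treeing, by lifting oriented edges through a set-theoretic section of $q$ and multiplying along the unique geodesic; freeness identifies each edge with a group element, acyclicity gives the homomorphism property via backtrack cancellation, and the construction is purely local on classes, so no ergodicity reduction and no appeal to Gaboriau--Hjorth or Westman is needed. Your measurability discussion is brief but the remedy you name (partition $\Gamma\ltimes X$ by the sequence of edge labels of the geodesic, a countable Borel partition) is standard and does close that gap. What your approach buys is a self-contained, elementary proof of Theorem \ref{thm-split-groupoid} (and it adapts verbatim to the groupoid version, Theorem \ref{thm-section}); what the paper's approach buys is the stronger statement $H^n(\mathcal{R},C)=0$ for all $n\geq 2$, and in particular a reusable $H^2$-vanishing that is applied again in the proof of Theorem \ref{thm-action} to untwist the cocycles $\sigma_i$, so the cohomological machinery is not expendable for the paper as a whole. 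One small point to make explicit if you write this up: the lift of the reverse of an oriented edge must be \emph{defined} as the inverse of the lift of the edge (as you do), since for a set-theoretic section one has $s(\delta^{-1})\neq s(\delta)^{-1}$ in general, and without this convention the backtrack cancellation would only hold modulo $C$.
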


We refer to Theorem \ref{thm-section} for a general version of Theorem \ref{thm-split-groupoid}.
Using this splitting, we show that there exists an ergodic, free and p.m.p.\ action of $G$ which is orbit equivalent to the direct product of such actions of $C$ and of $\Gamma$ (see Theorem \ref{thm-split}).

Theorem \ref{thm-split-groupoid} for the case where the action $\Gamma \c (X, \mu)$ is orbit equivalent to a free p.m.p.\ action of a free group $F$ is derived from Series' observation based on cohomology of discrete measured groupoids (\cite[\S 4, (C)]{series}).
The proof is outlined as follows:
Let $\calr$ be the discrete measured equivalence relation associated with the action $\Gamma \c (X, \mu)$.
Analogously to the theory of group extensions, there is one-to-one correspondence between a central extension of $\calr$ with a central subgroup $C$ and an element of the cohomology group $H^2(\calr, C)$, such that the splitting of the extension corresponds to $0$.
The group $H^2(\calr, C)$ is isomorphic to the second cohomology group of $F$ with coefficient in some module, thanks to Westman \cite{west2}.
The latter vanishes because $F$ is a free group.
The splitting then follows.

\begin{rem}
For any p.m.p.\ treeable equivalence relation $\calr$ and any abelian Polish group $C$, regarding $C$ as an $\calr$-module on which $\calr$ acts trivially, we indeed show that $H^n(\calr, C)=0$ for any integer $n$ with $n\geq 2$ (Corollary \ref{cor-treeable}).

Treeability was extended for discrete measured groupoids by Anantharaman-Delaroche \cite{ana-t}, \cite{ana-h} and Ueda \cite{ueda}, where several features of treeable equivalence relations are extended.
It is natural to ask whether the above vanishing of cohomology also holds for any treeable groupoid.
This question remains unsolved because we do not know whether any ergodic treeable groupoid arises from an action of a free group, up to stable isomorphism.
\end{rem}

In \cite[\S 7.3.3]{ccjjv}, the following question on the Haagerup property (HAP) of groups (\cite{h}) is asked:
For a central extension $1\to C\to G\to \Gamma \to 1$ of locally compact groups, is it true that $G$ has the HAP if and only if $\Gamma$ has the HAP?
We give new examples of a discrete countable group $G$ and its central subgroup $C$ such that $G$ and $G/C$ have the HAP.
They are obtained as an application of the splitting theorem.

A discrete countable group is called {\it treeable} if it has an ergodic, free and p.m.p.\ action such that the associated equivalence relation is treeable.
We refer to \cite{al}, \cite{btw} and \cite{gab-ex} for examples of treeable groups.
If a discrete countable group $G$ has a central subgroup $C$ with $G/C$ treeable, then $G$ has the HAP because $G$ is measure equivalent to the direct product $C\times (G/C)$ by Theorem \ref{thm-split-groupoid} (or Theorem \ref{thm-split}) and the HAP is invariant under measure equivalence (\cite[Proposition 1.3]{jol-h-vn}, \cite[Remark in p.172]{jol-h} and \cite[Proposition 3.1]{popa}).
Using Theorem \ref{thm-split-groupoid} and its general version, we further obtain a much broader class of central extensions having the HAP.

\begin{notation}
Let $\scc$ denote the smallest subclass of the class of discrete countable groups that satisfies the following conditions {\rm (1)--(4)}:
\begin{enumerate}
\item[(1)] Any treeable group belongs to $\scc$.
\item[(2)] The direct product of two groups in $\scc$ belongs to $\scc$.
\item[(3)] For a discrete countable group $G$ and its finite index subgroup $H$, we have $G\in \scc$ if and only if $H\in \scc$.
\item[(4)] Any central extension of a group in $\scc$ belongs to $\scc$.
\end{enumerate}
\end{notation}

\begin{thm}\label{thm-h}
Any group in $\scc$ has the HAP.
More generally, for any discrete countable group $G$ and its central subgroup $C$ with $G/C\in \scc$, the pair $(G, C)$ has the gHAP.
\end{thm}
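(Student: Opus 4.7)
My plan is to prove the second, more general assertion; the first then follows by taking $C=\{e\}$, since $G\in\scc$ gives $G/\{e\}=G\in\scc$. I will argue by induction on the construction of the class $\scc$, showing that the subclass
\[
\scc':=\{\Gamma\in\scc : (G,C)\text{ has gHAP for every central extension }1\to C\to G\to\Gamma\to 1\}
\]
contains the treeable groups and is closed under operations (2)--(4) defining $\scc$.

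For the base case, assume $\Gamma$ is treeable and let $1\to C\to G\to\Gamma\to 1$ be a central extension. I would pick an ergodic free p.m.p.\ action of $\Gamma$ on some $(X,\mu)$ with treeable orbit equivalence relation, and let $G$ act on $(X,\mu)$ through $q$. Theorem~\ref{thm-split-groupoid} provides the splitting $G\ltimes(X,\mu)\cong C\times(\Gamma\ltimes(X,\mu))$ that is the identity on $C$, and Theorem~\ref{thm-split} upgrades it to a measure equivalence coupling between $G$ and $C\times\Gamma$ that respects the central subgroup. Since $\Gamma$ is treeable it has the HAP, and so $(C\times\Gamma,C)$ trivially has gHAP (pull back HAP approximants from $\Gamma$ to $C$-invariant pd functions on $C\times\Gamma$ which vanish at infinity modulo $C$). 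Invoking invariance of gHAP under measure equivalence of pairs --- an adaptation of the Jolissaint--Popa argument to the relative setting --- then transfers gHAP to $(G,C)$.

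For the inductive steps I would proceed as follows. Operation~(4) is the most conceptual: if $\Gamma'\in\scc$ is itself a central extension $1\to K\to\Gamma'\to\Gamma\to 1$ with $\Gamma\in\scc'$, and $1\to C\to G\to\Gamma'\to 1$ is any central extension, then $C'':=q_{G}^{-1}(K)$ is central in $G$ with $G/C''\cong\Gamma$, so the inductive hypothesis supplies gHAP for $(G,C'')$. Refining this to gHAP for $(G,C)$ exploits amenability of $C''/C\cong K$ to convert $C''$-invariant pd approximants into $C$-invariant ones that still vanish at infinity modulo $C$. Operation~(2) is handled by writing $G$ as a central product $G_1\ast_C G_2$ with $G_i=q^{-1}(\Gamma_i)$ and combining pd approximants for $(G_i,C)$ via tensor-type products that remain $C$-invariant. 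Operation~(3) is standard: commensurability preserves gHAP via induction/restriction of pd functions adapted to pairs.

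The main obstacle will be the invariance of gHAP under measure equivalence of pairs. The Jolissaint--Popa proof of ME invariance of the HAP proceeds by transferring approximating pd functions across an ME coupling; for pairs $(G,C)$ one must verify that $C$-invariance and the $C_0$-behavior modulo $C$ survive this transfer. This requires a choice of fundamental domains compatible with the central subgroups on both sides, which is precisely what the groupoid splitting of Theorem~\ref{thm-split-groupoid} is designed to supply. Once this relative ME invariance is secured, the inductive verifications (2)--(4) become essentially formal.
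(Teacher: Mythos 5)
Your induction on the defining operations of $\scc$ breaks down at the two steps that are actually hard, and in both cases for the same algebraic reason: iterating central extensions does not produce central extensions. For operation (4) you set $C''=q_G^{-1}(K)$ and claim it is central in $G$; this is false in general. From $K$ central in $\Gamma'=G/C$ you only get $[G,C'']\subset C$, not $[G,C'']=\{e\}$ (take $G$ the integer Heisenberg group, $C$ its center, $\Gamma'=\Z^2$, $K$ any infinite cyclic subgroup of $\Z^2$: then $C''\cong\Z^2$ is not central in $G$). Since the gHAP is only defined for pairs $(G,C'')$ with $C''$ central, the inductive hypothesis cannot be invoked. Similarly, for operation (2) the preimages $G_i=q^{-1}(\Gamma_i)$ satisfy only $[G_1,G_2]\subset C$, so $G$ is not a central product: writing $g=g_1g_2$, the quantity $(g_1g_2)^{-1}(h_1h_2)$ differs from $(g_1^{-1}h_1)(g_2^{-1}h_2)$ by a commutator in $C$, and your ``tensor-type product'' of positive definite functions picks up a character of that commutator and need not be positive definite. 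Finally, the ``relative ME invariance of gHAP,'' which you correctly flag as an obstacle, is never established; it is not in the paper either, and it is not what the splitting theorem supplies.

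These obstructions are precisely what the paper's argument is built to circumvent, and it does so at the groupoid level rather than the group level. Theorem \ref{thm-action} shows by induction on the construction of $\scc$ that every $G\in\scc$ admits an ergodic p.m.p.\ action whose groupoid $\calg$, after restriction to a suitable $Y$ with $\calg Y=X$, contains a \emph{weakly co-abelian} subgroupoid isomorphic to a direct product of treeable equivalence relations. In the central-extension step, each treeable factor $\calr_i$ is lifted separately using $H^2(\calr_i,C)=0$ (Corollary \ref{cor-treeable}); the failure of the lifts to commute is measured by homomorphisms $\omega^{ij}\colon\calr_i\to H^1(\calr_j,C)$ with countable abelian image, and one passes to the co-abelian subrelations $\cals_i=\bigcap_j\ker\omega^{ij}$, which are still treeable by Gaboriau and whose lifts do commute. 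One then climbs back up through the chain of co-abelian extensions using Propositions \ref{prop-hap-ext} and \ref{prop-ghap-ext} (permanence of the HAP/gHAP of groupoids under extensions with abelian amenable quotient, via induced representations and a F\o lner argument), Propositions \ref{prop-hap-rest} and \ref{prop-ghap-rest} (restriction), and Proposition \ref{prop-group-oid} (passage from the groupoid pair back to the group pair). Your proposal contains no mechanism playing the role of the $\omega^{ij}$ or of the co-abelian permanence results, and without them the induction cannot close.
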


The generalized Haagerup property (gHAP) is defined for the pair $(G, C)$ of a locally compact group $G$ and its closed central subgroup $C$ (\cite[Definition 4.2.1]{ccjjv}).
This is used to discuss the HAP of a connected Lie group and its central extension.
It is shown that for any two pairs $(A, C)$, $(B, C)$ of discrete countable groups with the gHAP, the amalgamated free product $A\ast_CB$ has the gHAP with respect to $C$, and thus has the HAP (\cite[Proposition 6.2.3 (2)]{ccjjv}).
We should mention the following fundamental facts on the gHAP:
Let $G$ be a locally compact group and $C$ its closed central subgroup.
\begin{itemize}
\item If $(G, C)$ has the gHAP, then $G/C$ has the HAP.
\item The pair $(G, \{ e\})$ has the gHAP if and only if $G$ has the HAP.
\item If $G$ is second countable and $(G, C)$ has the gHAP, then $G$ has the HAP.
\item If $G$ is amenable, then $(G, C)$ has the gHAP.
\end{itemize}
We refer to Lemma 4.2.8, Lemma 4.2.10, Lemma 4.2.11 and Proposition 4.2.12 in \cite{ccjjv} for these assertions, respectively.
It follows that if $A$ and $B$ are discrete countable amenable groups with a common central subgroup $C$, then $A\ast_CB$ has the HAP (\cite[Corollary 6.2.5]{ccjjv}).
Theorem \ref{thm-h} covers this result.

Our initial motivation of this work is to find groups having an ergodic, free, p.m.p.\ and stable action.
We say that an ergodic p.m.p.\ action is {\it stable} if the associated groupoid is isomorphic to its direct product with the ergodic hyperfinite equivalence relation of type ${\rm II}_1$.
The author shows that for a discrete countable group $G$ with a central subgroup $C$, if $(G, C)$ does not have property (T), then $G$ has such a stable action (\cite[Theorem 1.1]{kida-srt}).
By Theorem \ref{thm-h}, we therefore obtain the following:

\begin{cor}
Any group in $\scc$ with the infinite center has an ergodic, free, p.m.p.\ and stable action, and is therefore measure equivalent to its direct product with $\Z$.
\end{cor}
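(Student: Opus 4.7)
The plan is to combine Theorem~\ref{thm-h} with the author's prior stability criterion \cite[Theorem 1.1]{kida-srt}, which produces an ergodic, free, p.m.p.\ and stable action of $G$ as soon as the pair $(G,C)$ fails to have (relative) property (T) for some central subgroup $C$. So let $G\in\scc$ have infinite center $Z$, and fix any infinite central subgroup $C$ of $G$ (for concreteness $C=Z$). Applying Theorem~\ref{thm-h} to the pair $(G,\{e\})$ gives the gHAP of $(G,\{e\})$, which by the bulleted properties of the gHAP listed before Theorem~\ref{thm-h} is simply the HAP of $G$.

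Next I would verify that $G$ having the HAP together with $C$ being infinite forbids relative property (T) of $(G,C)$, which is the classical incompatibility of a-$T$-menability with any infinite relative (T). A short sketch: the HAP supplies a sequence of normalized positive-definite functions $\phi_n\in C_0(G)$ tending pointwise to $1$, and in the associated GNS representations $(\pi_n,\xi_n)$ the vectors $\xi_n$ are almost invariant; since $\{g:\phi_n(g)=1\}$ is finite on a discrete group, this obstructs the uniform existence of nonzero $C$-invariant vectors for infinite $C$ required by relative property (T). Rather than reprove this folklore I would just cite \cite{ccjjv}. Then \cite[Theorem 1.1]{kida-srt} applies and furnishes the desired ergodic, free, p.m.p.\ and stable action of $G$.

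For the measure-equivalence conclusion, the strategy is to realize the ergodic hyperfinite equivalence relation of type ${\rm II}_1$ as the orbit relation of an ergodic, free, p.m.p.\ action of $\Z$; the stable decomposition of the orbit groupoid of the $G$-action as its direct product with this hyperfinite relation then promotes to an orbit equivalence between the $G$-action and a free, p.m.p.\ action of $G\times\Z$ on the same standard probability space. By the standard correspondence between orbit equivalence of free p.m.p.\ actions and measure equivalence of the acting groups, one concludes that $G$ is measure equivalent to $G\times\Z$.

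The main obstacle is the rigorous handling of the implication ``HAP plus infinite subgroup implies no relative (T)''; modulo this well-known fact, the corollary is an essentially one-line concatenation of Theorem~\ref{thm-h} and the author's earlier stability theorem. An alternative route would be to try to apply Theorem~\ref{thm-h} directly to the pair $(G,C)$ to obtain the gHAP (which, together with $G/C$ infinite, also rules out relative (T)), but this would require first showing $G/C\in\scc$, which is not immediate from the closure properties defining $\scc$; for this reason the HAP-only route above seems the cleaner one.
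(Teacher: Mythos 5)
Your proposal is correct and follows essentially the same route the paper intends: the paper offers no separate proof, presenting the corollary as an immediate consequence of Theorem \ref{thm-h} (which gives the HAP of any $G\in\scc$) combined with \cite[Theorem 1.1]{kida-srt}, exactly via the standard incompatibility of the HAP with relative property (T) of $(G,C)$ for an infinite central subgroup $C$. Your choice of the HAP-only route (rather than trying to establish $G/C\in\scc$ for the gHAP of the pair) and your derivation of the measure equivalence with $G\times\Z$ from stability both match the intended argument.
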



The paper is organized as follows:
In Section \ref{sec-groupoid}, we prepare terminology and notation on discrete measured groupoids.
We review cohomology of discrete measured groupoids and its basic properties, due to Westman \cite{west1}, \cite{west2}.
The relationship with cohomology of their ergodic components are also discussed.
In Section \ref{sec-split}, we prove the splitting theorem \ref{thm-section} for a central extension of a treeable equivalence relation.

In Section \ref{sec-hap}, we discuss the HAP and gHAP of discrete measured groupoids.
The former was originally introduced by Anantharaman-Delaroche \cite{ana-h}.
For a technical requirement, changing her definition slightly, we introduce another property and call it the HAP in this paper.
We review a Hilbert bundle and a representation of a discrete measured groupoid on it, following \cite{ana-t}, \cite{ramsay} and \cite{wil}.
We discuss induced representations, the $c_0$-property of a representation, and the weak containment of the trivial representation.
One of the aim of Section \ref{sec-hap} is to show that our HAP of a discrete measured groupoid is preserved under taking an extension whose quotient is an abelian group (Proposition \ref{prop-hap-ext}).
In Section \ref{sec-c}, applying it, we prove Theorem \ref{thm-h}.

In Appendix \ref{sec-app}, we compare the HAPs of Jolissaint, Anantharaman-Delaroche, and ours.
Jolissaint \cite{jol-h} introduced the HAP for p.m.p.\ discrete measured equivalence relations.
His definition can naturally be extended for discrete measured groupoids.
We show that this extension of the HAP of Jolissaint, the HAP of Anantharaman-Delaroche, and our HAP are all equivalent.

Throughout the paper, for a set $S$, we denote by $|S|$ its cardinality.
Let $\N$ denote the set of positive integers.
Unless otherwise mentioned, we mean by a discrete group a discrete and countable group.

\medskip

\noindent {\it Acknowledgements.} We thank Yoshimichi Ueda for informing us of Series' work \cite{series}.


\section{Discrete measured groupoids and their cohomology}\label{sec-groupoid}

We mean by a {\it standard probability space} $(X, \mu)$ a standard Borel space $X$ endowed with a probability measure $\mu$.
We refer to \cite{arv} and \cite{kec-set} for its fundamentals.
Throughout this section, we fix a standard probability space $(X, \mu)$.

\subsection{Terminology}

Recall that a subset $A$ of $X$ is called {\it $\mu$-measurable} if there exist Borel subsets $B_1$, $B_2$ of $X$ such that $B_1\subset A\subset B_2$ and $\mu(B_2\setminus B_1)=0$.
When $\mu$ is understood from the context, let us simply call it a {\it measurable} subset of $X$.
We also call $\mu$-measurable functions, maps, etc.\ measurable ones.
All relations among measurable sets and maps that appear in this paper are understood to hold up to sets of measure zero, unless otherwise mentioned.

We follow the terminology employed in \cite{ana-t} and \cite{ana-h} on discrete measured groupoids.
Let $\calg$ be a groupoid on $X$ with $r, s\colon \calg \to X$ the range and source maps, respectively.
We set
\[\calg^{(2)}=\{\, (g, h)\in \calg \times \calg \mid s(g)=r(h)\, \}.\]
For $x, y\in X$, we set $\calg^y=r^{-1}(y)$ and $\calg_x=s^{-1}(x)$.
We say that $\calg$ is {\it Borel} if it is endowed with a standard Borel structure such that the range, source, inverse and product maps are Borel, where $\calg^{(2)}$ is endowed with the standard Borel structure induced by $\calg \times \calg$.
We say that $\calg$ is {\it discrete} if for any $y\in X$, the set $\calg^y$ is countable.

Suppose that $\calg$ is Borel and discrete.
Let $\tilde{\mu}$ be the $\sigma$-finite measure on $\calg$ defined so that for any Borel subset $E$ of $\calg$, we have
\[\tilde{\mu}(E)=\int_X|E\cap \calg_x|\, d\mu(x).\]
Let $\tilde{\mu}^{-1}$ be the image of $\tilde{\mu}$ under the inverse map of $\calg$.
We say that $\mu$ is {\it quasi-invariant} under $\calg$ if $\tilde{\mu}$ and $\tilde{\mu}^{-1}$ are equivalent.
We say that $\mu$ is {\it invariant} under $\calg$ if $\tilde{\mu}=\tilde{\mu}^{-1}$.

We say that $\calg$ is a {\it discrete measured groupoid} on $(X, \mu)$ if $\calg$ is Borel and discrete and the measure $\mu$ is quasi-invariant under $\calg$.
If $\mu$ is invariant under $\calg$, then $\calg$ is called {\it p.m.p.}
Unless otherwise mentioned, we mean by a {\it subgroupoid} of $\calg$ a measurable subgroupoid of $\calg$ whose unit space is equal to $X$.

For a measurable subset $A$ of $X$, we define the {\it saturation} of $A$ by $\calg$ as
\[\calg A=\{ \, r(g)\in X\mid s(g)\in A\, \},\]
which is a measurable subset of $X$. 
We say that $\calg$ is {\it ergodic} if for any measurable subset $A$ of $X$, we have $\mu(\calg A)=0$ or $1$.
For a measurable subset $A$ of $X$ with positive measure, we define the {\it restriction} of $\calg$ to $A$ as
\[\calg|_A=\{ \, g\in \calg \mid r(g), s(g)\in A\, \},\]
which is a discrete measured groupoid on $(A, \mu|_A)$.

Let $\calh$ be a discrete measured groupoid on a standard probability space $(Y, \nu)$.
We mean by a {\it homomorphism} from $\calg$ into $\calh$ a measurable map $\alpha \colon \calg \to \calh$ such that the measure $\alpha_*\tilde{\mu}$ on $\calh$ is absolutely continuous with respect to $\tilde{\nu}$, and for a.e.\ $(g_1, g_2)\in \calg^{(2)}$, we have $(\alpha(g_1), \alpha(g_2))\in \calh^{(2)}$ and $\alpha(g_1g_2)=\alpha(g_1)\alpha(g_2)$.
If there exists a homomorphism $\beta \colon \calh \to \calg$ such that $\beta \circ \alpha$ is the identity on $\calg$ and $\alpha \circ \beta$ is the identity on $\calh$, then $\alpha$ is called an {\it isomorphism}.
We say that $\calg$ and $\calh$ are {\it isomorphic} if there exists an isomorphism between them.
We say that $\calg$ and $\calh$ are {\it stably isomorphic} if there exist measurable subsets $A\subset X$ and $B\subset Y$ such that $\calg A=X$, $\calh B=Y$, and $\calg|_A$ and $\calh|_B$ are isomorphic.

Let $G$ be a discrete group.
Suppose that we have a {\it non-singular} action $G\c (X, \mu)$, i.e., a Borel action of $G$ on $X$ preserving the class of $\mu$.
The product set $G\times X$ is then endowed with a structure of a discrete measured groupoid on $(X, \mu)$ as follows:
The range and source maps are defined by $r(\gamma, x)=\gamma x$ and $s(\gamma, x)=x$ for $\gamma \in G$ and $x\in X$.
The product is defined by $(\gamma, \delta x)(\delta, x)=(\gamma \delta, x)$ for $\gamma, \delta \in G$ and $x\in X$.
The unit at $x\in X$ is $(e, x)$, where $e$ is the neutral element of $G$.
The inverse of $(\gamma, x)\in G\times X$ is $(\gamma^{-1}, \gamma x)$.
The measure $\mu$ is quasi-invariant because the action $G\c (X, \mu)$ is non-singular.
This discrete measured groupoid is denoted by $G\ltimes (X, \mu)$, and is called the groupoid {\it associated} with the action $G\c (X, \mu)$.
We note that the groupoid $G\ltimes (X, \mu)$ is p.m.p.\ if and only if the action $G\c (X, \mu)$ is p.m.p.

For a non-singular action $G\c (X, \mu)$, we define the equivalence relation {\it associated} with the action as the discrete measured groupoid on $(X, \mu)$,
\[\calr =\{ \, (\gamma x, x)\in X\times X\mid \gamma \in G,\ x\in X\, \},\]
where the range and source maps are defined by $r(\gamma x, x)=\gamma x$ and $s(\gamma x, x)=x$ for $\gamma \in G$ and $x\in X$; the product is defined by $(x, y)(y, z)=(x, z)$ for $(x, y), (y, z)\in \calr$; the unit at $x\in X$ is $(x, x)$; and the inverse of $(x, y)\in \calr$ is $(y, x)$.
We have the quotient homomorphism $q \colon G\ltimes (X, \mu)\to \calr$ defined by $q(\gamma, x)=(\gamma x, x)$ for $\gamma \in G$ and $x\in X$.


\subsection{Cohomology}\label{subsec-coh}

Throughout this subsection, we fix a discrete measured groupoid $\calg$ on $(X, \mu)$, and fix an abelian Polish group $C$.
We review cohomology of $\calg$ with coefficient in the $\calg$-module $C$ on which $\calg$ acts trivially, due to Westman \cite{west1}, \cite{west2}.
We refer to Feldman-Moore \cite{fm1} and Series \cite{series} for cohomology of discrete measured equivalence relations with more general coefficients.

Let $r, s\colon \calg \to X$ be the range and source maps of $\calg$, respectively.
We set $\calg^{(0)}=X$, and for a positive integer $n$, we set
\[\calg^{(n)}=\{ \, (g_1,\ldots, g_n)\in \calg^n\mid s(g_i)=r(g_{i+1}) \ \textrm{for\ any}\ i=1, 2,\ldots, n-1\, \}.\]
We define a map $s_n\colon \calg^{(n)}\to X$ by $s_n(g_1,\ldots, g_n)=s(g_n)$ for $(g_1,\ldots, g_n)\in \calg^{(n)}$.
For any $x\in X$, the fiber $s_n^{-1}(x)$ at $x$ is countable.
We define a measure $\mu_n$ on $\calg^{(n)}$ by
\[\mu_n(E)=\int_X|E\cap s_n^{-1}(x)|\, d\mu(x)\]
for a measurable subset $E$ of $\calg^{(n)}$.
Note that we have $\mu^{(1)}=\tilde{\mu}$ on $\calg^{(1)}=\calg$.

The product of two elements of the abelian group $C$ are denoted by addition.
For any non-negative integer $n$, we define $C^n(\calg, C)$ as the module of measurable maps from $\calg^{(n)}$ into $C$, where two maps equal almost everywhere are identified, and the module structure is endowed by pointwise addition.
We define a homomorphism
\[\delta^n\colon C^n(\calg, C)\to C^{n+1}(\calg, C)\]
as follows:
For $\phi \in C^0(\calg, C)$, we define $\delta^0\phi \in C^1(\calg, C)$ by
\[(\delta^0\phi)(g)=\phi(s(g))-\phi(r(g))\]
for $g\in \calg$.
If $n\geq 1$, then for $\phi \in C^n(\calg, C)$, we define $\delta^n\phi \in C^{n+1}(\calg, C)$ by
\begin{align*}
(\delta^n\phi)(g_0, g_1,\ldots, g_n)&=\phi(g_1,\ldots, g_n)+\sum_{k=1}^n(-1)^k\phi(g_0,\ldots, g_{k-2}, g_{k-1}g_k, g_{k+1},\ldots, g_n)\\
& \qquad +(-1)^{n+1}\phi(g_0,\ldots, g_{n-1})
\end{align*}
for $(g_0, g_1, \ldots, g_n)\in \calg^{(n+1)}$.
By direct computation, we have $\delta^n\circ \delta^{n-1}=0$ for any positive integer $n$.
It follows that $(C^\bullet(\calg, C), \delta^\bullet)$ is a cochain complex.

Let $n$ be a non-negative integer.
We set $Z^n(\calg, C)=\ker \delta^n$.
We set $B^0(\calg, C)=0$, and set $B^n(\calg, C)=\delta^{n-1}(C^{n-1}(\calg, C))$ if $n\geq 1$.
We define $H^n(\calg, C)$ as the quotient module $Z^n(\calg, C)/B^n(\calg, C)$, and call it the $n$-th {\it cohomology group} of $\calg$ with coefficient in $C$.

Let $\calh$ be a discrete measured groupoid on a standard probability space $(Y, \nu)$.
Let $n$ be a non-negative integer.
Any homomorphism $\alpha \colon \calg \to \calh$ induces a homomorphism from $C^n(\calh, C)$ into $C^n(\calg, C)$ compatible with the coboundary map $\delta^n$.
We therefore have the induced homomorphism $\alpha^*\colon H^n(\calh, C)\to H^n(\calg, C)$.

Let $\alpha, \beta \colon \calg \to \calh$ be homomorphisms.
Let $\alpha_0, \beta_0\colon X\to Y$ denote the maps induced by $\alpha$ and $\beta$, respectively.
We say that $\alpha$ and $\beta$ are {\it similar} if there exists a measurable map $\varphi \colon X\to \calh$ such that for a.e.\ $x\in X$, the range and source of $\varphi(x)$ are $\beta_0(x)$ and $\alpha_0(x)$, respectively, and for a.e.\ $g\in \calg$, we have $\varphi(r(g))\alpha(g)=\beta(g)\varphi(s(g))$.
The following lemma is proved by constructing a cochain homotopy (see the proof of \cite[Theorem 2.3]{west1}).

\begin{lem}[\ci{Theorems 2.3 and 3.55}{west1}]\label{lem-similar}
Let $\alpha, \beta \colon \calg \to \calh$ be similar homomorphisms.
Then for any non-negative integer $n$, the induced homomorphisms $\alpha^*, \beta^*\colon H^n(\calh, C)\to H^n(\calg, C)$ are equal.
\end{lem}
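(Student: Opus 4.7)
The plan is to prove the lemma by constructing an explicit cochain homotopy between the pullback cochain maps $\alpha^{\#}, \beta^{\#} \colon C^{n}(\calh, C) \to C^{n}(\calg, C)$ defined by $(\alpha^{\#}\phi)(g_{1}, \ldots, g_{n}) = \phi(\alpha(g_{1}), \ldots, \alpha(g_{n}))$ and similarly for $\beta^{\#}$. These are well-defined because the absolute-continuity hypothesis on $\alpha_{*}\tilde{\mu}$, $\beta_{*}\tilde{\mu}$ ensures null sets are respected, and they commute with $\delta^{n}$ by the homomorphism property. The case $n = 0$ is immediate: if $\phi \in Z^{0}(\calh, C)$, then $\phi$ is $\calh$-invariant, so for a.e.\ $x \in X$ one has $\phi(\alpha_{0}(x)) = \phi(s(\varphi(x))) = \phi(r(\varphi(x))) = \phi(\beta_{0}(x))$, whence $\alpha^{\#}\phi = \beta^{\#}\phi$.

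For $n \geq 1$, I would define a prism operator $s^{n} \colon C^{n}(\calh, C) \to C^{n-1}(\calg, C)$ by
\[
(s^{n}\phi)(g_{1}, \ldots, g_{n-1}) = \sum_{k=0}^{n-1}(-1)^{k}\phi\bigl(\beta(g_{1}), \ldots, \beta(g_{k}), \varphi(x_{k}), \alpha(g_{k+1}), \ldots, \alpha(g_{n-1})\bigr),
\]
where $x_{0} = r(g_{1})$, $x_{k} = s(g_{k})$ for $1 \leq k \leq n-1$, with the convention that for $n = 1$ this reduces to $(s^{1}\phi)(x) = \phi(\varphi(x))$. Composability of each inserted $n$-tuple in $\calh$ is guaranteed by the conditions $s(\varphi(x)) = \alpha_{0}(x)$ and $r(\varphi(x)) = \beta_{0}(x)$, which match exactly the ranges of $\beta(g_{k})$ and the sources of $\alpha(g_{k+1})$. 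Measurability of $s^{n}\phi$ is inherited from $\varphi$, $\alpha$, $\beta$, and $\phi$.

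The core step is to verify the standard homotopy identity
\[
\delta^{n-1} s^{n}\phi + s^{n+1}\delta^{n}\phi = \alpha^{\#}\phi - \beta^{\#}\phi
\]
by direct expansion. When $\delta^{n-1}$ is applied to the $k$-th summand of $s^{n}\phi$, the interior faces either pair $\beta(g_{i})\beta(g_{i+1}) = \beta(g_{i}g_{i+1})$ or $\alpha(g_{i})\alpha(g_{i+1}) = \alpha(g_{i}g_{i+1})$, cancelling matching terms from $s^{n+1}\delta^{n}\phi$; the faces adjacent to $\varphi(x_{k})$ use the naturality relation $\varphi(r(g))\alpha(g) = \beta(g)\varphi(s(g))$ to telescope with the summands of index $k \pm 1$. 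Only the extreme contributions at $k = 0$ and $k = n$ survive, producing exactly $\alpha^{\#}\phi - \beta^{\#}\phi$. Once this is in hand, restricting to a cocycle $\phi \in Z^{n}(\calh, C)$ gives $\alpha^{\#}\phi - \beta^{\#}\phi = \delta^{n-1}(s^{n}\phi) \in B^{n}(\calg, C)$, so $\alpha^{*} = \beta^{*}$ on $H^{n}(\calh, C)$.

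The only genuine obstacle is the sign bookkeeping in the simplicial cancellation step, which is mechanical but lengthy; the measure-theoretic subtleties are handled uniformly by the paper's convention of working modulo null sets.
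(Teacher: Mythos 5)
Your proposal is correct and follows exactly the route the paper indicates: the paper proves this lemma by citing Westman's cochain-homotopy construction, and your prism operator $s^{n}$ is precisely that homotopy (your signs check out against the paper's coboundary convention, e.g.\ for $n=1$ the relation $\varphi(r(g))\alpha(g)=\beta(g)\varphi(s(g))$ cancels the middle terms and leaves $\phi(\alpha(g))-\phi(\beta(g))$). You in fact supply more detail than the paper, which defers the verification entirely to Westman.
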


As an application of Lemma \ref{lem-similar}, we obtain the following:

\begin{lem}\label{lem-rest}
Let $A$ be a measurable subset of $X$ with $\calg A=X$.
Let $i\colon \calg|_A\to \calg$ be the homomorphism defined as the inclusion.
Then for any non-negative integer $n$, the induced homomorphism $i^*\colon H^n(\calg, C)\to H^n(\calg|_A, C)$ is an isomorphism.
\end{lem}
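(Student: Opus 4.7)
The plan is to construct a homomorphism $j\colon \calg \to \calg|_A$ in the reverse direction and invoke Lemma \ref{lem-similar}, showing that $j^*$ is the two-sided inverse of $i^*$ on cohomology.

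First I would build a measurable ``retraction'' $\psi\colon X\to \calg$. Since $\calg A=X$ up to null sets, the Borel map $r\colon s^{-1}(A)\to X$ is surjective with countable fibers, so the Lusin-Novikov uniformization theorem yields a measurable section $\psi_0\colon X\to s^{-1}(A)$ with $r\circ \psi_0=\mathrm{id}_X$. Modifying $\psi_0$ on $A$ by setting $\psi(x)=1_x$ for $x\in A$ and $\psi(x)=\psi_0(x)$ for $x\in X\setminus A$ gives a measurable map $\psi\colon X\to \calg$ with $r(\psi(x))=x$ and $s(\psi(x))\in A$ for a.e.\ $x$, and with $\psi(x)=1_x$ whenever $x\in A$.

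Next I would define
\[
j(g)=\psi(r(g))^{-1}\, g\, \psi(s(g))\qquad (g\in \calg).
\]
Since $\psi(r(g))^{-1}$ has source $r(g)$ and range in $A$, and $\psi(s(g))$ has range $s(g)$ and source in $A$, $j(g)$ lies in $\calg|_A$. A short computation using $\psi(s(g_1))=\psi(r(g_2))$ when $(g_1,g_2)\in \calg^{(2)}$ shows $j(g_1g_2)=j(g_1)j(g_2)$, so $j$ is a measurable homomorphism; quasi-invariance of $\mu$ under $\calg$ together with countability of the fibers of $r,s$ ensures the pushforward of $\tilde{\mu}$ is absolutely continuous with respect to $\widetilde{\mu|_A}$.

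The two compositions are now immediate to analyze. For $g\in \calg|_A$ one has $r(g),s(g)\in A$, so $\psi(r(g))=1_{r(g)}$ and $\psi(s(g))=1_{s(g)}$, giving $j\circ i=\mathrm{id}_{\calg|_A}$. For $i\circ j$ acting on $\calg$, take $\varphi=\psi\colon X\to \calg$; its range and source at $x$ are $x$ and $s(\psi(x))$, matching $(\mathrm{id}_\calg)_0(x)$ and $(i\circ j)_0(x)$. The identity
\[
\varphi(r(g))\,(i\circ j)(g)=\psi(r(g))\,\psi(r(g))^{-1}\, g\, \psi(s(g))=g\,\psi(s(g))=\mathrm{id}_\calg(g)\,\varphi(s(g))
\]
shows $i\circ j$ is similar to $\mathrm{id}_\calg$. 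By Lemma \ref{lem-similar}, $j^*\circ i^*=(i\circ j)^*=\mathrm{id}$ on $H^n(\calg,C)$ and $i^*\circ j^*=(j\circ i)^*=\mathrm{id}$ on $H^n(\calg|_A,C)$, so $i^*$ is an isomorphism.

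The main obstacle is really only the measurable-selection step producing $\psi$ with the additional property $\psi|_A=\mathrm{id}$; once that is in hand, everything reduces to the formal similarity computation and Lemma \ref{lem-similar}.
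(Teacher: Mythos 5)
Your proof is correct and is essentially the paper's own argument: both construct a measurable section landing in $A$ (your $\psi$ is the inverse of the paper's $f$), conjugate by it to get a retraction $j\colon\calg\to\calg|_A$ with $j\circ i=\mathrm{id}$ and $i\circ j$ similar to $\mathrm{id}_\calg$, and conclude via Lemma \ref{lem-similar}. The only addition is your explicit appeal to Lusin--Novikov for the selection, which the paper leaves implicit.
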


\begin{proof}
By the assumption $\calg A=X$, there exists a measurable map $f\colon X\to \calg$ such that for any $x\in X$, we have $s(f(x))=x$ and $r(f(x))\in A$; and for any $x\in A$, the element $f(x)$ is the unit of $\calg$ at $x$.
We define a homomorphism $j\colon \calg \to \calg|_A$ by $j(g)=f(r(g))gf(s(g))^{-1}$ for $g\in \calg$.
The composition $j\circ i$ is the identity on $\calg|_A$, and the composition $i\circ j$ is the homomorphism from $\calg$ into itself similar to the identity on $\calg$.
The lemma follows from Lemma \ref{lem-similar}.
\end{proof}

Let $G\c (X, \mu)$ be a non-singular action of a discrete group $G$.
Let $\sff (X, C)$ denote the module of measurable maps from $X$ into $C$, where two maps equal almost everywhere are identified, and the module structure is endowed by pointwise addition.
We endow $\sff(X, C)$ with a $G$-module structure by $(\gamma f)(x)=f(\gamma^{-1}x)$ for $\gamma \in G$, $f\in \sff(X, C)$ and $x\in X$.
Let $(C^\bullet(G, \sff(X, C)), \delta^\bullet)$ denote the usual cochain complex of the group $G$ with coefficient in the $G$-module $\sff(X, C)$.
For any non-negative integer $n$, the natural isomorphism
\[\pi_n\colon C^n(G\ltimes (X, \mu), C)\to C^n(G, \sff(X, C))\]
is defined by
\[(\pi_n\phi)(\gamma_1,\ldots, \gamma_n)(x)=\phi((\gamma_1, \gamma_1^{-1}x), (\gamma_2, \gamma_2^{-1}\gamma_1^{-1}x),\ldots, (\gamma_n, \gamma_n^{-1}\gamma_{n-1}^{-1}\cdots \gamma_1^{-1}x))\]
for $\phi \in C^n(G\ltimes (X, \mu), C)$, $\gamma_1,\ldots, \gamma_n\in G$ and $x\in X$.
The map $\pi_n$ is compatible with the coboundary map.
We therefore obtain the following theorem that relates cohomology of $G\ltimes (X, \mu)$ to that of $G$.

\begin{thm}[\ci{Theorem 1.0}{west2}]\label{thm-coh}
Let $G\c (X, \mu)$ be a non-singular action of a discrete group $G$.
For any non-negative integer $n$, the homomorphism $\pi_n$ induces the isomorphism from $H^n(G\ltimes (X, \mu), C)$ onto $H^n(G, \sff(X, C))$.
\end{thm}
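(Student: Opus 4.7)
The plan is to exhibit $\pi_n$ as a $\Z$-module isomorphism on cochains that commutes with $\delta^\bullet$; the theorem will then follow by passing to cohomology. The excerpt already states that $\pi_n$ is compatible with the coboundary, which is a direct verification from inserting the two formulas (one for the groupoid coboundary, one for the group-cohomology coboundary applied to the $G$-module $\sff(X,C)$). Hence the substantive content is that each $\pi_n$ is a bijection on the modules of a.e.\ equivalence classes of cochains.

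Set $\calg=G\ltimes (X,\mu)$. I would verify the bijection via the Borel parametrization
\[\Phi_n\colon G^n\times X\to \calg^{(n)},\quad (\gamma_1,\ldots,\gamma_n,x)\mapsto \bigl((\gamma_1,\gamma_1^{-1}x),(\gamma_2,\gamma_2^{-1}\gamma_1^{-1}x),\ldots,(\gamma_n,\gamma_n^{-1}\cdots \gamma_1^{-1}x)\bigr),\]
which sends a point to the unique composable tuple of range $x$ with group coordinates $(\gamma_1,\ldots,\gamma_n)$, hence is a Borel bijection. On the set $\{((g_1,x_1),\ldots,(g_n,x_n))\in \calg^{(n)} : g_i=\gamma_i,\, x_n\in A\}$ of composable tuples (for fixed $\gamma_i\in G$ and $A\subset X$ Borel), the measure $\mu_n$ assigns mass $\mu(A)$, whereas the pushforward $(\Phi_n)_*(m^n\times \mu)$ of the product of counting measure $m^n$ on $G^n$ with $\mu$ assigns mass $\mu(\gamma_1\cdots \gamma_n A)$; these differ by the Radon--Nikodym cocycle of the quasi-invariant action $G\c (X,\mu)$, which is strictly positive. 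Hence the two measures share the same null sets, and $\pi_n$ descends to a well-defined map on a.e.\ equivalence classes.

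The inverse is given by
\[(\pi_n^{-1}\psi)((g_1,x_1),\ldots,(g_n,x_n))=\psi(g_1,\ldots,g_n)(g_1g_2\cdots g_nx_n)\]
for $\psi\in C^n(G,\sff(X,C))$ and composable tuples, where $g_1\cdots g_nx_n$ is the common range of the tuple; a direct substitution via $\Phi_n$ confirms that $\pi_n\circ \pi_n^{-1}$ and $\pi_n^{-1}\circ \pi_n$ are the identity on a.e.\ equivalence classes. Combined with the intertwining $\pi_{n+1}\circ \delta^n=\delta^n\circ \pi_n$, this shows that $\pi_n$ induces the asserted isomorphism $H^n(\calg,C)\cong H^n(G,\sff(X,C))$. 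The main subtle point is the measure-theoretic comparison in the second paragraph: one must use quasi-invariance, rather than bare measurability of the action, to ensure that $\mu_n$-negligibility and the a.e.\ equivalence of the transported data correspond, which is precisely what forces $\pi_n$ to descend to cochains modulo a.e.\ equivalence.
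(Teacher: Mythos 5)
Your proposal is correct and follows essentially the same route as the paper, which simply records that $\pi_n$ is a cochain-level isomorphism compatible with the coboundary maps and refers to Westman for the details; your verification that $\Phi_n$ is a Borel bijection carrying $\mu_n$ to a measure equivalent to $m^n\times\mu$ (via quasi-invariance), together with the explicit inverse $(\pi_n^{-1}\psi)(\ldots)=\psi(g_1,\ldots,g_n)(g_1\cdots g_nx_n)$, is exactly the content being invoked. No gaps.
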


The following is a generalization of \cite[Corollary 1.1]{west2}.

\begin{cor}\label{cor-tree}
Let $\calr$ be a discrete measured equivalence relation which is ergodic, p.m.p.\ and treeable.
Let $C$ be an abelian Polish group, and regard it as an $\calr$-module on which $\calr$ acts trivially.
Then $H^n(\calr, C)=0$ for any integer $n$ with $n\geq 2$.
\end{cor}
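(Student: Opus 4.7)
My plan is to reduce to the case of a free, p.m.p.\ action of a free group, where the vanishing falls out immediately from Theorem \ref{thm-coh} together with the classical fact that free groups have cohomological dimension at most $1$.

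First I would invoke the Hjorth--Gaboriau realization theorem recalled in the introduction: any ergodic, p.m.p., treeable equivalence relation is stably isomorphic to the orbit equivalence relation $\calr'$ of a free, ergodic, p.m.p.\ action of a free group $F$ on some standard probability space $(Y,\nu)$. By the definition of stable isomorphism there exist measurable subsets $A\subset X$ and $B\subset Y$ with $\calr A=X$ and $\calr' B=Y$ such that $\calr|_A$ and $\calr'|_B$ are isomorphic as discrete measured groupoids. Applying Lemma \ref{lem-rest} to each of these inclusions yields natural isomorphisms
\[H^n(\calr,C)\cong H^n(\calr|_A,C)\cong H^n(\calr'|_B,C)\cong H^n(\calr',C)\]
for every non-negative integer $n$, so it is enough to compute $H^n(\calr',C)$.

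Now, because the action $F\c (Y,\nu)$ is free, the quotient homomorphism $q\colon F\ltimes (Y,\nu)\to \calr'$ from Section \ref{sec-groupoid} is an isomorphism of discrete measured groupoids, and Theorem \ref{thm-coh} then identifies $H^n(\calr',C)$ with the ordinary group cohomology $H^n(F,\sff(Y,C))$, where $\sff(Y,C)$ carries its natural $F$-module structure. Since $F$ is free it has cohomological dimension at most $1$, so $H^n(F,M)=0$ for every $F$-module $M$ and every $n\geq 2$; specializing to $M=\sff(Y,C)$ closes the argument.

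There is no real technical obstacle here: the only non-formal input is the Hjorth--Gaboriau theorem used in the first step, and it is precisely its restriction to equivalence relations (rather than to arbitrary treeable groupoids) that forces the hypothesis in the corollary and matches the open question flagged in the paper's remark. Everything else is the formal machinery already assembled in Section \ref{sec-groupoid}, so the proof should be quite short.
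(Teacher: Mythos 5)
Your argument is correct and is essentially the paper's own proof: both reduce to a free, ergodic, p.m.p.\ action of a free group via Gaboriau--Hjorth, pass through Lemma \ref{lem-rest} to handle the stable isomorphism, apply Theorem \ref{thm-coh}, and conclude from the vanishing of higher cohomology of free groups. The only cosmetic difference is that you route through the orbit equivalence relation $\calr'$ and note separately that freeness makes $F\ltimes (Y,\nu)\to\calr'$ an isomorphism, whereas the paper states the stable isomorphism with $F\ltimes(Y,\nu)$ directly.
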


\begin{proof}
Thanks to Gaboriau \cite[Proposition II.6]{gab-cost} and Hjorth \cite[Corollary 1.2]{hj}, we have a free group $F$ and its ergodic, free and p.m.p.\ action $F\c (Y, \nu)$ such that $\calr$ is stably isomorphic to the groupoid $F\ltimes (Y, \nu)$.
For any non-negative integer $n$, the group $H^n(\calr, C)$ is isomorphic to $H^n(F\ltimes (Y, \nu), C)$ by Lemma \ref{lem-rest}, and to $H^n(F, \sff(Y, C))$ by Theorem \ref{thm-coh}.
If $n\geq 2$, then the last cohomology group is zero because $F$ is a free group.
\end{proof}


\subsection{Restrictions to ergodic components}

Let $\calr$ be a discrete measured equivalence relation on $(X, \mu)$.
Let $C$ be an abelian Polish group, and regard it as an $\calr$-module on which $\calr$ acts trivially.
In this subsection, we show that if two cocycles on $\calr$ into $C$ are cohomologous on almost every ergodic component of $\calr$, then they are indeed cohomologous as cocycles on $\calr$.
The proof relies on Fisher-Morris-Whyte \cite{fmw}.
We do not deal with the same problem for a general groupoid here.

Let $\theta \colon (X, \mu)\to (Z, \xi)$ be the ergodic decomposition for $\calr$.
Let $\mu =\int_Z\mu_z\, d\xi(z)$ be the disintegration of $\mu$ with respect to $\theta$.
For $z\in Z$, we define $\calr_z$ as the discrete measured equivalence relation $\calr$ on $(X, \mu_z)$, which is ergodic and is also regarded as a groupoid on $\theta^{-1}(z)$.
It is known that there exists a measurable action of a discrete group $G$ on $(X, \mu)$ whose orbit equivalence relation is $\calr$ (\cite[Theorem 1]{fm1}).
The ergodic decomposition for $\calr$ is identified with that for the action of $G$.

For a non-negative integer $n$, a measurable map $\phi \colon \calr^{(n)}\to C$ and $z\in Z$, we denote by $\phi_z\colon (\calr_z)^{(n)}\to C$ the restriction of $\phi$.

\begin{prop}\label{prop-comp}
We keep the above notation.
Fix a non-negative integer $n$, and pick a measurable map $\phi \colon \calr^{(n)}\to C$.
If $\phi_z\in B^n(\calr_z, C)$ for a.e.\ $z\in Z$, then $\phi\in B^n(\calr, C)$.
\end{prop}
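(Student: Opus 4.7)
The strategy is to reduce the statement to a measurable selection problem across the ergodic decomposition. Since the disintegration $\mu=\int_Z\mu_z\,d\xi(z)$ induces a corresponding disintegration of the natural measure on $\calr^{(n-1)}$ into the fiberwise measures on $(\calr_z)^{(n-1)}$, any $\xi$-measurably chosen family $\{\psi^z\in C^{n-1}(\calr_z,C)\}_{z\in Z}$ assembles to a unique element $\psi\in C^{n-1}(\calr,C)$, and the desired identity $\delta^{n-1}\psi=\phi$ is equivalent to the fiberwise identities $\delta^{n-1}\psi^z=\phi_z$ holding for $\xi$-a.e.\ $z$. The whole content of the proposition therefore lies in producing a measurable selection $z\mapsto\psi^z$, given that each fiberwise preimage $(\delta^{n-1})^{-1}(\phi_z)$ is nonempty by hypothesis.

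To carry this out, I would equip each $C^m(\calr_z,C)$ with the Polish topology of convergence in $\mu_z$-measure, under which it becomes a Polish abelian group and each coboundary $\delta^m$ is a continuous homomorphism. Following the framework of Fisher-Morris-Whyte \cite{fmw}, one assembles the collection $\{C^{n-1}(\calr_z,C)\}_{z\in Z}$ into a standard Borel field of Polish groups over $(Z,\xi)$, equipped with a fiberwise Borel homomorphism $\delta^{n-1}$ into the analogous target field, and realizes $z\mapsto\phi_z$ as a Borel section of the target. The set
\[\Omega=\{\,(z,\psi)\mid z\in Z,\ \psi\in C^{n-1}(\calr_z,C),\ \delta^{n-1}\psi=\phi_z\,\}\]
is then a Borel subset of the total space, whose fibers are nonempty for $\xi$-a.e.\ $z$ and are cosets of the closed subgroup $Z^{n-1}(\calr_z,C)\subset C^{n-1}(\calr_z,C)$. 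The Jankov-von Neumann uniformization theorem yields a universally measurable, hence $\xi$-measurable, selection $z\mapsto\psi^z\in\Omega_z$; reassembling via the disintegration produces the desired $\psi\in C^{n-1}(\calr,C)$, and the fiberwise equations $\delta^{n-1}\psi^z=\phi_z$ upgrade to $\delta^{n-1}\psi=\phi$ on $\calr^{(n)}$.

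The main obstacle is setting up the bundle-of-Polish-groups apparatus rigorously: verifying that $\{C^{n-1}(\calr_z,C)\}_{z\in Z}$ carries a standard Borel structure compatible with the disintegration of $\calr$, that the fiberwise coboundary $\delta^{n-1}$ is Borel between the total spaces, and that $z\mapsto\phi_z$ is a Borel section of the target field rather than merely a pointwise measurable assignment. These are exactly the technical points handled in Fisher-Morris-Whyte \cite{fmw} in closely related contexts, and the proof would proceed by adapting their constructions. Once the bundle is in place, both the measurable selection and the reassembly back to a global cochain on $\calr^{(n-1)}$ are essentially formal, the only subtlety being that the selection is determined only up to a fiberwise coset of cocycles, which is irrelevant for the existence statement.
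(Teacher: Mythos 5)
Your proposal is correct and follows essentially the same route as the paper: disintegrate over the ergodic decomposition, topologize the fiberwise cochain spaces as Polish spaces of measurable functions, verify Borelness of the set of pairs $(z,\psi)$ with $\delta^{n-1}\psi=\phi_z$, apply a von Neumann--type measurable selection, and reassemble. The paper implements your ``bundle of Polish groups'' concretely by combining Feldman--Moore with the Rohlin decomposition of \cite{fmw} to identify every fiber with a subspace of the single fixed space $\sff(G^{n-1}\times S,C)$ of groupoid cochains, the one nontrivial extra step being a lemma showing that the set of such cochains descending to the equivalence relation $\calr_z$ is Borel in $z$ --- which is exactly the technical point your last paragraph defers to \cite{fmw}.
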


To prove this proposition, we prepare notation and a lemma.
Let $(S, \eta)$ be a standard probability space and $(M, d)$ a separable metric space.
As in Subsection \ref{subsec-coh}, we define $\sff(S, M)$ as the space of measurable maps from $S$ into $M$, where two maps are identified if they are equal almost everywhere.
If $(M, d)$ is complete, then $\sff(S, M)$ is a Polish space, under the topology of convergence in measure (\cite[Notation 2.4]{fmw}).

\begin{lem}\label{lem-map}
Let $(S, \eta)$ be a standard probability space, $(M, d)$ a separable metric space, and $L$ a standard Borel space.
Then the following two assertions hold:
\begin{enumerate}
\item[(i)] For any Borel map $\varphi \colon L\times S\to M$, the induced map $\widetilde{\varphi}\colon L\to \sff(S, M)$ is defined by $(\widetilde{\varphi}(x))(s)=\varphi(x, s)$ for $x\in L$ and $s\in S$, and is Borel.
\item[(ii)] Conversely, for any Borel map $\Phi \colon L\to \sff(S, M)$, there exists a Borel map $\varphi \colon L\times S\to M$ such that for any $x\in L$, we have $\varphi(x, s)=(\Phi(x))(s)$ for a.e.\ $s\in S$.
\end{enumerate}
\end{lem}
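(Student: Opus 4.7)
The plan is to equip $\sff(S,M)$ with the standard metric $D(f,g)=\int_S \min(d(f(s),g(s)),1)\, d\eta(s)$, which induces the topology of convergence in measure and makes $\sff(S,M)$ a separable metric space. Assertion (i) then follows from a direct Fubini-type argument, while (ii) requires a martingale-style approximation.

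For (i), Borelness of $\varphi$ on $L \times S$ implies that for each $x \in L$ the section $s \mapsto \varphi(x,s)$ is Borel, so $\widetilde\varphi(x)$ is a well-defined element of $\sff(S,M)$. To show $\widetilde\varphi$ is Borel, it is enough to show that for each $f_0$ in a countable dense subset of $\sff(S,M)$, the function $x \mapsto D(\widetilde\varphi(x),f_0)$ is Borel; this is immediate from integrating out $s$ in the jointly Borel integrand $(x,s)\mapsto \min(d(\varphi(x,s),f_0(s)),1)$, since open balls around a countable dense sequence generate the Borel $\sigma$-algebra of $\sff(S,M)$.

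For (ii) the issue is to produce a single Borel $\varphi$ on $L\times S$ whose section $\varphi(x,\cdot)$ equals $\Phi(x)$ for \emph{every} individual $x$, the null set in $S$ being allowed to depend on $x$. I would first reduce to $M=[0,1]$ by means of an Urysohn embedding of $M$ into the Hilbert cube $[0,1]^{\N}$ and handle coordinates separately. For $M=[0,1]$, fix a refining sequence $\{\mathcal{P}_k\}_{k\in \N}$ of finite Borel partitions of $S$ whose union generates the $\eta$-measurable structure, and set $\varphi_k(x,s)$ equal to the average of $\Phi(x)$ over the atom of $\mathcal{P}_k$ containing $s$. Since the linear functionals $f \mapsto \int_A f\, d\eta$ are continuous on the bounded part of $\sff(S,[0,1])$, Borelness of $\Phi$ gives joint Borelness of each $\varphi_k$ on $L\times S$. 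For every fixed $x$, the $L^1$-martingale convergence theorem yields $\varphi_k(x,s)\to \Phi(x)(s)$ for $\eta$-almost every $s$, so $\varphi(x,s)=\limsup_k \varphi_k(x,s)$ is the required Borel function.

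The main obstacle is the pointwise-in-$x$ uniformity in (ii): nothing in the hypothesis selects a jointly Borel representative of the $\Phi(x)$ directly, and a naive simple-function approximation would only yield equality $\varphi(x,\cdot)=\Phi(x)$ after discarding an exceptional set of $x$'s. The martingale filtration circumvents this by converging for every $x$ simultaneously, at the cost of requiring a linear structure on $M$, which is why the reduction to the Hilbert cube is performed first. Once coordinatewise equality holds, the resulting $\varphi$ lies in $M$ for $\eta$-a.e.\ $s$ for each $x$, and modifying on the exceptional set by a fixed basepoint of $M$ produces a Borel function $L \times S\to M$ as claimed.
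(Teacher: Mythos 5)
The paper does not actually prove this lemma: immediately after the statement it refers to Lemmas 2.6 and 2.7 of Fisher--Morris--Whyte \cite{fmw} for assertions (i) and (ii), respectively. So you are supplying an argument where the paper supplies a citation. Your argument is essentially the standard one and is sound in the setting the paper actually uses. For (i), metrizing convergence in measure by $D(f,g)=\int_S\min(d(f(s),g(s)),1)\,d\eta(s)$, observing that $\sff(S,M)$ is separable so that balls about a countable dense set generate its Borel $\sigma$-algebra, and applying Tonelli to the jointly Borel integrand is correct and complete (after fixing Borel representatives of the reference functions $f_0$). For (ii), the conditional-expectation/martingale scheme along a refining generating sequence of finite partitions is exactly the right device to get a jointly Borel function whose $x$-sections converge a.e.\ for \emph{every} $x$, and your observation that $f\mapsto\int_A f\,d\eta$ is continuous on the uniformly bounded part of $\sff(S,[0,1])$ correctly yields joint Borelness of each approximant.

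One caveat deserves flagging. Your reduction of (ii) to $M=[0,1]$ embeds $M$ into the Hilbert cube via a homeomorphism $h$ and, at the end, tests whether $\varphi_\infty(x,s)$ lies in $h(M)$ before applying $h^{-1}$ or substituting a basepoint. This requires $h(M)$ to be a Borel subset of $[0,1]^{\N}$, which holds when $M$ is completely metrizable (then $h(M)$ is a $G_\delta$), but not for an arbitrary separable metric space as the lemma is literally stated; for a non-absolutely-Borel $M$ the exceptional set $\{(x,s):\varphi_\infty(x,s)\notin h(M)\}$ need not be Borel and the last step breaks down. In this paper the lemma is only ever applied with $M=C$ an abelian Polish group (and the surrounding text of the paper emphasizes the complete case), so your proof covers every use made of the lemma; but if you want the statement in full generality you should either add a completeness (or absolute Borelness) hypothesis on $M$ or replace the Hilbert-cube reduction by an argument intrinsic to $M$.
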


We refer to \cite[Lemmas 2.6 and 2.7]{fmw} for these assertions (i) and (ii), respectively.

\begin{proof}[Proof of Proposition \ref{prop-comp}]
As already mentioned, by \cite[Theorem 1]{fm1}, we have a Borel action of a discrete group $G$ on $X$ such that
\[\calr =\{\, (gx, x)\in X\times X\mid g\in G,\ x\in X\, \}.\]
Applying the Rohlin decomposition theorem (\cite[Proposition 2.21]{fmw}), we may assume that there exist a standard probability space $(S, \eta)$, a Borel action of $G$ on $Z\times S$ and a Borel isomorphism $\Theta \colon X\to Z\times S$ such that
\begin{itemize}
\item $\Theta_*\mu$ and $\xi \times \eta$ are equivalent; and
\item the actions of $G$ on $X$ and on $Z\times S$ are equivariant under $\Theta$; and
\item for any $z\in Z$, the measure on $\{ z\} \times S$ induced by $\eta$ is quasi-invariant and ergodic under the action of $G$.
\end{itemize}
We identify $X$ with $Z\times S$ under $\Theta$.
There exists a Borel map $\rho_S\colon G\times Z\times S\to S$ such that for any $g\in G$ and any $z\in Z$, we have $g(z, s)=(z, \rho_S(g, z, s))$ for a.e.\ $s\in S$.
For any $z\in Z$, there exists a Borel map $\rho_z\colon G\times S\to S$ such that $\rho_z(g, s)=\rho_S(g, z, s)$ for any $g\in G$ and any $s\in S$.

For any $z\in Z$, we have the action $\rho_z$ of $G$ on a conull Borel subset of $S$.
The measure $\eta$ is quasi-invariant under this action.
We denote by $\calg_z$ the discrete measured groupoid associated with the action $\rho_z$.

Let $m$ be a non-negative integer.
We denote by $G^m$ the direct product of $m$ copies of $G$ if $m\geq 1$, and the trivial group if $m=0$.
For any $z\in Z$, the space $C^m(\calg_z, C)$ is identified with $\sff(G^m\times S, C)$ under the measurable isomorphism from $G^m\times S$ onto $\calg_z^{(m)}$ defined by
\[((g_1,\ldots, g_m), s)\mapsto ((g_1, \rho_z(g_2\cdots g_m, s)), (g_2, \rho_z(g_3\cdots g_m, s)),\ldots, (g_m, s))\]
for $g_1,\ldots, g_m\in G$ and $s\in S$.
For any $z\in Z$, we have the map
\[I_{m, z}\colon C^m(\calr_z, C)\to \sff(G^m\times S, C)\]
induced by the quotient map from $\calg_z$ onto $\calr_z$.
We set $\Omega_{m, z}=I_{m, z}(C^m(\calr_z, C))$.

\begin{lem}
Let $m$ be a non-negative integer.
Then the set
\[\Omega_m=\{ \, (z, f)\in Z\times \sff(G^m\times S, C)\mid f\in \Omega_{m, z}\, \}\]
is Borel in $Z\times \sff(G^m\times S, C)$.
\end{lem}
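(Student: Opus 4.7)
The plan is to exhibit $\Omega_m$ as a countable intersection of Borel subsets of $Z\times \sff(G^m\times S, C)$, indexed by the countable set $G^m\times G^m$. First I would give a concrete description of $\Omega_{m,z}$: an element $f\in \sff(G^m\times S, C)$ lies in $\Omega_{m,z}$ precisely when it descends through the quotient $\calg_z^{(m)}\to \calr_z^{(m)}$. Under the parameterization of $\calg_z^{(m)}$ by $G^m\times S$ recalled just before the lemma, the points $(\bar g, s)$ and $(\bar g', s)$ have the same image in $\calr_z^{(m)}$ exactly when $s$ lies in the set
\[A_{\bar g, \bar g', z}=\{\, s\in S\mid \rho_z(g_ig_{i+1}\cdots g_m, s)=\rho_z(g_i'g_{i+1}'\cdots g_m', s)\ \textrm{for every}\ 1\le i\le m\, \}.\]
Hence, by countability of $G^m$, the condition $f\in \Omega_{m,z}$ unwinds to the following countable family of almost-everywhere equalities: for every $(\bar g, \bar g')\in G^m\times G^m$, one has $f(\bar g, s)=f(\bar g', s)$ for $\eta$-a.e.\ $s\in A_{\bar g, \bar g', z}$.

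For each pair $(\bar g, \bar g')\in G^m\times G^m$, the set $A_{\bar g, \bar g'}=\{\, (z, s)\in Z\times S\mid s\in A_{\bar g, \bar g', z}\, \}$ is Borel by Borel measurability of $\rho_S\colon G\times Z\times S\to S$. I would then define a map
\[\Phi_{\bar g, \bar g'}\colon Z\times \sff(G^m\times S, C)\to \sff(S, C),\quad (z, f)\mapsto \bigl[\, s\mapsto \chi_{A_{\bar g, \bar g'}}(z, s)\bigl(f(\bar g, s)-f(\bar g', s)\bigr)\, \bigr],\]
and observe that the characterization above translates to the equality
\[\Omega_m=\bigcap_{(\bar g, \bar g')\in G^m\times G^m}\Phi_{\bar g, \bar g'}^{-1}(\{0\}).\]
Since the singleton $\{0\}\subset \sff(S, C)$ is closed in the topology of convergence in measure and $G^m\times G^m$ is countable, it suffices to show that each $\Phi_{\bar g, \bar g'}$ is Borel.

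To prove Borel measurability of $\Phi_{\bar g, \bar g'}$, I would regard $G^m\times S$ as a standard probability space by putting a probability measure of full support on the countable set $G^m$ and taking the product with $\eta$. Applying Lemma~\ref{lem-map}(ii) to the identity map of $\sff(G^m\times S, C)$ yields a Borel map
\[\tilde f\colon \sff(G^m\times S, C)\times G^m\times S\to C\]
such that for every $f\in \sff(G^m\times S, C)$ and every $\bar h\in G^m$, the equality $\tilde f(f, \bar h, s)=f(\bar h, s)$ holds for $\eta$-a.e.\ $s\in S$. I would then define the Borel map
\[\psi_{\bar g, \bar g'}\colon Z\times \sff(G^m\times S, C)\times S\to C,\quad \psi_{\bar g, \bar g'}(z, f, s)=\chi_{A_{\bar g, \bar g'}}(z, s)\bigl(\tilde f(f, \bar g, s)-\tilde f(f, \bar g', s)\bigr),\]
and invoke Lemma~\ref{lem-map}(i) with $L=Z\times \sff(G^m\times S, C)$ to conclude that the induced map $L\to \sff(S, C)$, which agrees pointwise with $\Phi_{\bar g, \bar g'}$ by construction, is Borel.

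The main obstacle is the bookkeeping required to set up the two invocations of Lemma~\ref{lem-map} cleanly. Specifically, one has to pin down a standard probability space structure on $G^m\times S$ whose null sets coincide with those implicit in the definition of $\sff(G^m\times S, C)$, so that the evaluation-at-$\bar g$ operation is well defined on equivalence classes; this works precisely because $G^m$ is countable and can be equipped with a probability measure of full support. Once these identifications are fixed, the proof is a routine two-step application of the two parts of Lemma~\ref{lem-map}.
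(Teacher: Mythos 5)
Your proof is correct and follows essentially the same route as the paper's: both reduce membership in $\Omega_{m,z}$ to a countable family of almost-everywhere identities encoding invariance under the fibers of $\calg_z^{(m)}\to\calr_z^{(m)}$, and realize each identity as a Borel condition by applying the two halves of Lemma \ref{lem-map} exactly as you describe. The only difference is cosmetic: the paper indexes the conditions by single-coordinate moves $(\gamma,i)\in G\times\{1,\ldots,m\}$ and tests the fixed-point equation $\tau^i_\gamma(z,f)=f$ in $\sff(G^m\times S,C)$, whereas you index by all pairs $(\bar g,\bar g')\in G^m\times G^m$ and test vanishing in $\sff(S,C)$, which spares you the (easy) verification that the single-coordinate moves generate the fiber equivalence.
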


\begin{proof}
For any $z\in Z$, we have $\Omega_{0, z}=\sff(S, C)$, and therefore have $\Omega_0=Z\times \sff(S, C)$.
The lemma follows when $m=0$.

We assume $m\geq 1$.
We claim that for any $\gamma \in G$ and any $i=1,\ldots, m$, there exists a Borel map
\[\tau_\gamma^i \colon Z\times \sff(G^m\times S, C)\to \sff(G^m\times S, C)\]
such that for any $z\in Z$ and any $f\in \sff(G^m\times S, C)$, the following equation holds:
For any $g=(g_1,\ldots, g_m)\in G^m$ and a.e.\ $s\in S$, we have
\begin{align*}
& \tau_\gamma^i(z, f)(g, s)\\
= \, & \begin{cases}
f((g_1,\ldots, g_{i-1}, \gamma g_i, g_{i+1},\ldots, g_m), s) & \textrm{if}\ \rho_z(\gamma, \rho_z(g_i\cdots g_m, s))=\rho_z(g_i\cdots g_m, s)\\
f(g, s) & \textrm{otherwise}.
\end{cases}
\end{align*}
If the claim is shown, then the lemma follows because we have
\[\Omega_m=\bigcap_{\gamma \in G}\bigcap_{i=1}^m\, \{ \, (z, f)\in Z\times \sff(G^m\times S, C)\mid \tau_\gamma^i(z, f)=f\, \}.\]

Fix $\gamma \in G$ and $i=1,\ldots, m$.
Applying Lemma \ref{lem-map} (ii) to the identity on $\sff(G^m\times S, C)$, we obtain a Borel map
\[u\colon \sff(G^m\times S, C)\times G^m\times S\to C\]
such that for any $f\in \sff(G^m\times S, C)$, we have $u(f, t)=f(t)$ for a.e.\ $t\in G^m\times S$.
Similarly, we obtain a Borel map
\[v\colon \sff(G^m\times S, C)\times G^m\times S\to C\]
such that for any $f\in \sff(G^m\times S, C)$, we have
\[v(f, (g, s))=f((g_1,\ldots, g_{i-1}, \gamma g_i, g_{i+1},\ldots, g_m), s)\]
for any $g=(g_1,\ldots, g_m)\in G^m$ and a.e.\ $s\in S$.
We define a map
\[w\colon Z\times \sff(G^m\times S, C)\times G^m\times S\to C\]
by
\[w(z, f, (g, s))=\begin{cases}
v(f, (g, s)) & \textrm{if}\ \rho_z(\gamma, \rho_z(g_i\cdots g_m, s))=\rho_z(g_i\cdots g_m, s)\\
u(f, (g, s)) & \textrm{otherwise}.
\end{cases}
\]
for $z\in Z$, $f\in \sff(G^m\times S, C)$, $g=(g_1,\ldots, g_m)\in G^m$ and $s\in S$.
The map $w$ is Borel, and induces the Borel map $\widetilde{w}$ by Lemma \ref{lem-map} (i).
We set $\tau_\gamma^i=\widetilde{w}$.
The claim then follows.
\end{proof}

We set $\calg =G\ltimes (X, \mu)$.
For a non-negative integer $m$, the space $C^m(\calg, C)$ is identified with $\sff(G^m\times X, C)$ under the Borel isomorphism from $G^m\times X$ onto $\calg^{(m)}$ defined by
\[((g_1,\ldots, g_m), x)\mapsto ((g_1, g_2\cdots g_mx), (g_2, g_3\cdots g_mx),\ldots, (g_m, x))\]
for $g_1,\ldots, g_m\in G$ and $x\in X$.
We have the map
\[I_m\colon C^m(\calr, C)\to \sff(G^m\times X, C)\]
induced by the quotient map from $\calg$ onto $\calr$.
For each $\phi \in C^m(\calr, C)$, we identify it with $I_m(\phi)$ if there is no confusion.
We then have the induced map $\widetilde{\phi}\colon Z\to \sff(G^m\times S, C)$.

The rest of the proof relies on the proof of \cite[Theorem 3.4 and Corollary 3.6]{fmw}.
Fix a non-negative integer $n$, and pick a measurable map $\phi \colon \calr^{(n)}\to C$.
Suppose that for a.e.\ $z\in Z$, the map $\phi_z$ belongs to $B^n(\calr_z, C)$.
To prove the proposition, we may assume $n\geq 1$ because $B^0=0$.
We define a map
\[d\colon Z\times \sff(G^n\times S, C)\times \sff(G^{n-1}\times S, C)\to \sff(G^n\times S, C)\]
by
\[d(z, \psi, f)(g, s)=\psi(g, s)-(\delta_z^{n-1}f)(g, s)\]
for $z\in Z$, $\psi \in \sff(G^n\times S, C)$, $f\in \sff(G^{n-1}\times S, C)$, $g\in G^n$ and $s\in S$, where
\[\delta_z^{n-1}\colon C^{n-1}(\calg_z, C)\to C^n(\calg_z, C)\]
is the coboundary map.
Following the proof of \cite[Theorem 3.4]{fmw}, we see that the map $d$ is Borel.
We define a map
\[p\colon Z\times \sff(G^{n-1}\times S, C)\to Z\times \sff(G^n\times S, C)\]
by
\[p(z, f)=(z, d(z, \widetilde{\phi}(z), f))\]
for $z\in Z$ and $f\in \sff(G^{n-1}\times S, C)$.
The map $p$ is Borel.

We define $\Sigma$ as the Borel subset $Z\times \{ 0\}$ of $Z\times \sff(G^n\times S, C)$.
The assumption that $\phi_z$ belongs to $B^n(\calr_z, C)$ for a.e.\ $z\in Z$ implies that the projection of $p^{-1}(\Sigma)\cap \Omega_{n-1}$ into $Z$ contains a conull Borel subset of $Z$.
By the von Neumann selection theorem (\cite[Theorem 3.4.3]{arv}, \cite[Theorem 2.3]{fmw}), there exists a Borel map $F\colon Z\to \sff(G^{n-1}\times S, C)$ such that
\[(z, F(z))\in p^{-1}(\Sigma)\cap \Omega_{n-1}\quad \textrm{for\ a.e.}\ z\in Z.\]
By Lemma \ref{lem-map} (ii), there exists a Borel map $f\colon G^{n-1}\times X\to C$ such that for any $z\in Z$ and any $g\in G^{n-1}$, we have
\[F(z)(g, s)=f(g, (z, s))\quad \textrm{for\ a.e.}\ s\in S.\]
The map $f$ belongs to $I_{n-1}(C^{n-1}(\calr, C))$ because $(z, F(z))$ belongs to $\Omega_{n-1}$ for a.e.\ $z\in Z$.
For a.e.\ $z\in Z$, we have $d(z, \widetilde{\phi}(z), F(z))=0$ because $(z, F(z))$ belongs to $p^{-1}(\Sigma)$.
For a.e.\ $x=(z, s)\in Z\times S$ and any $g\in G^n$, we therefore have
\[\phi(g, x)=\widetilde{\phi}(z)(g, s)=(\delta_z^{n-1}F(z))(g, s)=(\delta^{n-1}f)(g, x).\]
It follows that $\phi \in B^n(\calr, C)$.
\end{proof}

It follows from the definition of treeability that for any treeable equivalence relation, its almost every ergodic component is also treeable.
Combining this with Corollary \ref{cor-tree} and Proposition \ref{prop-comp}, we obtain the following:

\begin{cor}\label{cor-treeable}
Let $\calr$ be a discrete measured equivalence relation which is p.m.p.\ and treeable (but not necessarily ergodic).
Let $C$ be an abelian Polish group, and regard it as an $\calr$-module on which $\calr$ acts trivially.
Then $H^n(\calr, C)=0$ for any integer $n$ with $n\geq 2$.
\end{cor}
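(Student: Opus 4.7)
The proof is essentially an assembly of tools already in place. The plan is to reduce the statement to the ergodic case via the ergodic decomposition, invoke Corollary \ref{cor-tree} on almost every fiber, and then use Proposition \ref{prop-comp} to glue the fiberwise coboundary data into a genuine coboundary on $\calr$.

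More precisely, let $\theta\colon (X,\mu)\to (Z,\xi)$ be the ergodic decomposition for $\calr$, and let $\calr_z$ denote the ergodic equivalence relation on $(\theta^{-1}(z), \mu_z)$ obtained by restricting $\calr$, as in the setup of Proposition \ref{prop-comp}. The first step is to observe that treeability passes to almost every ergodic component: a measurable treeing of $\calr$, being a bundle of simplicial trees indexed by the unit space, restricts to a treeing of $\calr_z$ for $\xi$-a.e.\ $z\in Z$, so $\calr_z$ is an ergodic, p.m.p., treeable equivalence relation. By Corollary \ref{cor-tree}, for every integer $n\geq 2$ we have $H^n(\calr_z, C)=0$ for a.e.\ $z\in Z$.

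Now fix $n\geq 2$ and take any cocycle $\phi\in Z^n(\calr, C)$. For a.e.\ $z\in Z$, the restriction $\phi_z\colon (\calr_z)^{(n)}\to C$ is again a cocycle (the cocycle identity is a pointwise condition, and the negligible set where it could fail on $\calr$ translates to a negligible set of $z$ by Fubini applied to the disintegration $\mu=\int_Z \mu_z\, d\xi(z)$). By the vanishing on ergodic components, $\phi_z\in B^n(\calr_z, C)$ for a.e.\ $z\in Z$. Proposition \ref{prop-comp} then applies and gives $\phi\in B^n(\calr, C)$, so $\phi$ represents the zero class in $H^n(\calr, C)$. Since $\phi$ was arbitrary, $H^n(\calr, C)=0$.

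The only genuine work in this argument is already packaged: the measurable selection needed to promote a fiberwise primitive $F(z)$ of $\phi_z$ to a global primitive is exactly the content of Proposition \ref{prop-comp}, and the vanishing on an ergodic fiber is Corollary \ref{cor-tree} combined with Westman's isomorphism $H^n(F\ltimes (Y,\nu), C)\cong H^n(F, \sff(Y,C))$ and the freeness of $F$. Thus the main (and essentially only) conceptual point to verify carefully is that almost every ergodic component of a treeable equivalence relation is itself treeable; this is the step the author flags as following from the definition of treeability, and it is where I would spend the most care writing out the argument.
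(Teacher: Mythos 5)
Your argument is correct and coincides with the paper's own proof: the paper derives Corollary \ref{cor-treeable} by noting that almost every ergodic component of a treeable equivalence relation is treeable, applying Corollary \ref{cor-tree} to those components, and then gluing via Proposition \ref{prop-comp}. Your write-up simply makes explicit the routine verifications (restriction of cocycles to fibers, Fubini for the disintegration) that the paper leaves implicit.
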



\section{Splitting of groupoids}\label{sec-split}

We introduce a central subgroupoid of a discrete measured groupoid in the following:

\begin{defn}\label{defn-ce}
Let $\calg$ be a discrete measured groupoid on a standard probability space $(X, \mu)$.
We mean by a {\it central subgroupoid} of $\calg$ the pair $(C, \iota)$ of a discrete group $C$ and an injective homomorphism $\iota \colon \cal{C}\to \calg$, where $\cal{C}$ is the discrete measured groupoid associated with the trivial action of $C$ on $X$, such that
\begin{itemize}
\item the map from the unit space of $\cal{C}$ into that of $\calg$ induced by $\iota$ is the identity on $X$; and
\item for any $c\in C$ and a.e.\ $g\in \calg$, the equation $g\iota(c, s(g))=\iota(c, r(g))g$ holds.
\end{itemize}
\end{defn}

For $c\in C$ and $x\in X$, we write $\iota(c, x)$ as $c$ simply if its range or source is understood from the context.
We then have the equation $gc=cg$ for any $c\in C$ and a.e.\ $g\in \calg$.
This notation satisfies the following associative law:
For any $c\in C$ and a.e.\ $(g, h)\in \calg^{(2)}$, we have $c(gh)=(cg)h=g(hc)=(gh)c$.
This element can thus be denoted by $cgh=gch=ghc$ without ambiguity.
Let $\calg /C$ denote the quotient space for right (or left) multiplication of $C$ on $\calg$.
The space $\calg /C$ naturally admits a structure of a discrete measured groupoid on $(X, \mu)$.
We then say that $\calg$ is a {\it central extension} of $\calg /C$ with a central subgroup $C$.

Let $G$ be a discrete group with a central subgroup $C$.
Let $G\c (X, \mu)$ be a non-singular action such that $C$ acts trivially.
We define $\iota$ as the identity on $C\ltimes (X, \mu)$.
The pair $(C, \iota)$ is then a central subgroupoid of $G\ltimes (X, \mu)$. 
Theorem \ref{thm-split-groupoid} is therefore a special case of the following:

\begin{thm}\label{thm-section}
Let $\calg$ be a p.m.p.\ discrete measured groupoid on a standard probability space $(X, \mu)$ and $(C, \iota)$ a central subgroupoid of $\calg$.
Suppose that $\calg /C$ is isomorphic to a treeable equivalence relation.
Let $\theta \colon \calg \to \calg /C$ denote the quotient map.
Then the following two assertions hold:
\begin{enumerate}
\item The homomorphism $\theta$ splits, that is, there exists a subgroupoid $\calg_1$ of $\calg$ such that the restriction of $\theta$ to $\calg_1$ is an isomorphism onto $\calg/C$.
\item The product set $C\times \calg_1$ admits the structure of a discrete measured groupoid on $(X, \mu)$ with respect to the coordinatewise product operation.
The map $I\colon C\times \calg_1\to \calg$ defined by $I(c, g)=cg$ for $c\in C$ and $g\in \calg_1$ is then an isomorphism of discrete measured groupoids.
\end{enumerate}
\end{thm}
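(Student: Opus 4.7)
The strategy is the standard cohomological classification of central extensions, transferred to the measured groupoid setting via Corollary \ref{cor-treeable}. The plan is to choose a Borel section of $\theta$, encode its failure to be multiplicative by a 2-cocycle in $Z^2(\calg/C, C)$, trivialize this cocycle using the vanishing of $H^2(\calg/C, C)$, and then read off the splitting. First, by Lusin--Novikov applied to the countable-to-one Borel map $\theta$ (writing $\calg$ as a countable disjoint union of Borel sets on which $\theta$ is injective), I would choose a Borel section $\sigma \colon \calg/C \to \calg$, normalized to send units to units. Define a Borel map $\omega \colon (\calg/C)^{(2)} \to C$ by the relation
\[\sigma(g)\sigma(h) = \iota(\omega(g,h), r(g))\cdot \sigma(gh).\]
The value $\omega(g,h)$ is well defined because $\theta(\sigma(g)\sigma(h)) = gh = \theta(\sigma(gh))$ and the fibers of $\theta$ are free $C$-orbits under $\iota$. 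Associativity in $\calg$ combined with centrality of $\iota$ forces $\omega$ to satisfy the 2-cocycle identity, so $\omega \in Z^2(\calg/C, C)$.

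Since $\calg/C$ is isomorphic to a p.m.p.\ treeable equivalence relation, Corollary \ref{cor-treeable} yields $H^2(\calg/C, C) = 0$, and therefore $\omega = \delta^1 \eta$ for some Borel $\eta \colon \calg/C \to C$, meaning $\omega(g,h) = \eta(g) - \eta(gh) + \eta(h)$. Setting $\sigma'(g) := \iota(-\eta(g), r(g))\sigma(g)$, a direct computation using this coboundary formula together with centrality of $\iota$ yields $\sigma'(g)\sigma'(h) = \sigma'(gh)$. Hence $\sigma'$ is a groupoid homomorphism, and $\calg_1 := \sigma'(\calg/C)$ is a subgroupoid of $\calg$ on which $\theta$ restricts to an isomorphism onto $\calg/C$, proving (1).

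For (2), endow $C \times \calg_1$ with the direct product groupoid structure in which the first factor is the groupoid associated with the trivial action of $C$ on $X$. Define $I(c,g) := \iota(c, r(g))\cdot g$. Centrality of $\iota$ gives
\[I(c,g)\cdot I(d,h) = \iota(c, r(g))\,g\,\iota(d, r(h))\,h = \iota(c+d, r(g))\,gh = I(c+d, gh),\]
so $I$ is a homomorphism. For surjectivity, given $\tilde g \in \calg$, the element $g_1 := \sigma'(\theta(\tilde g)) \in \calg_1$ satisfies $\theta(\tilde g g_1^{-1}) = \theta(\tilde g)\theta(g_1)^{-1}$ a unit, so $\tilde g g_1^{-1} = \iota(c, r(\tilde g))$ for a unique $c \in C$, whence $\tilde g = I(c, g_1)$. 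Injectivity follows from $\theta|_{\calg_1}$ and $\iota$ both being injective. Hence $I$ is an isomorphism of discrete measured groupoids.

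The substantive input is Corollary \ref{cor-treeable}; the main technical care needed is measurability bookkeeping, namely ensuring that $\sigma$ can be chosen Borel, that $\omega$ defines a genuine element of $C^2(\calg/C, C)$ up to null sets, and that the trivialization $\eta$ of $\omega$ can be chosen Borel. These points reduce to Lusin--Novikov selection together with the framework of Section \ref{sec-groupoid}, but they are the steps most sensitive to careful phrasing.
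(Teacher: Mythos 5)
Your proposal is correct and follows essentially the same route as the paper: choose a measurable section of $\theta$, record its defect of multiplicativity as a $2$-cocycle, kill it using $H^2(\calg/C,C)=0$ from Corollary \ref{cor-treeable}, and twist the section by the resulting coboundary to obtain the splitting subgroupoid $\calg_1$, with assertion (2) then following from centrality exactly as you describe. The only cosmetic differences are your additive notation for $C$ and your explicit appeal to Lusin--Novikov for the Borel section, which the paper leaves implicit.
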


\begin{proof}
We set $\calq =\calg /C$, and pick a measurable section $u\colon \calq \to \calg$ of $\theta$.
We define a map $\sigma \colon \calq^{(2)} \to C$ so that
\[\sigma(g, h)u(gh)=u(g)u(h)\quad \textrm{for\ a.e.\ }(g, h)\in \calq^{(2)}.\]
The map $\sigma$ belongs to $Z^2(\calq, C)$.
By Corollary \ref{cor-treeable}, there exists a measurable map $\phi \colon \calq \to C$ such that
\[\sigma(g, h)=\phi(h)\phi(gh)^{-1}\phi(g)\quad \textrm{for\ a.e.\ }(g, h)\in \calq^{(2)}.\]

We set
\[\calg_1=\{ \, \phi(g)^{-1}u(g)\in \calg \mid g\in \calq \, \}.\]
We show that $\calg_1$ is a subgroupoid of $\calg$.
For a.e.\ $(g, h)\in \calq^{(2)}$, the equation
\begin{align}\label{eq-gh}
(\phi(g)^{-1}u(g))(\phi(h)^{-1}u(h))&=\phi(g)^{-1}\phi(h)^{-1}u(g)u(h)=\phi(g)^{-1}\phi(h)^{-1}\sigma(g, h)u(gh)\\
&=\phi(gh)^{-1}u(gh)\nonumber
\end{align}
holds.
The subset $\calg_1$ is thus closed under multiplication.
For $x\in X$, let $e_x\in \calq$ denote the unit at $x$.
By equation (\ref{eq-gh}), for a.e.\ $x\in X$, the element $\phi(e_x)^{-1}u(e_x)$ is the unit of $\calg$ at $x$.
It also follows from equation (\ref{eq-gh}) that for a.e.\ $g\in \calq$, the element $\phi(g^{-1})^{-1}u(g^{-1})$ is the inverse of $\phi(g)^{-1}u(g)$.
The subset $\calg_1$ is therefore a subgroupoid of $\calg$.

By the definition of $\calg_1$, the restriction of $\theta$ to $\calg_1$ is an isomorphism onto $\calq$.
Assertion (i) follows.
We have $cg=gc$ for any $c\in C$ and $g\in \calg$.
The map $I\colon C\times \calg_1\to \calg$ in assertion (ii) therefore preserves products, and is an isomorphism.
Assertion (ii) follows.
\end{proof}

\begin{rem}
Series \cite{series} constructs cohomology theory of a discrete measured equivalence relation $\calr$ with coefficient in a measured field $A=\{ A_x\}_{x\in X}$ of abelian, locally compact and second countable groups.
She studies a groupoid extension of $\calr$ by $A$ and its connection with cohomology, following the theory of group extensions.
A central extension in Definition \ref{defn-ce} is an extension in \cite{series} such that the field of groups is constant and further admits centrality.
Among other things, Series aims to reconstruct a groupoid from given $\calr$ and $A$.
The reconstruction is described through cohomology of $\calr$ (\cite[Theorem 3.5]{series}).
Using this, she also discusses when an extension splits (\cite[\S 4, (B)]{series}).
\end{rem}

Let $G\c (X, \mu)$ and $H\c (Y, \nu)$ be ergodic and p.m.p.\ actions of discrete groups.
These two actions are called {\it orbit equivalent (OE)} if the equivalence relations associated to them are isomorphic.
The following is a consequence of Theorem \ref{thm-section}, and asserts splitting of a free action of a central extension of a treeable group.

\begin{thm}\label{thm-split}
Let $1\to C\to G\to \Gamma \to 1$ be an exact sequence of discrete groups such that $C$ is central in $G$.
Let $C\c (Y, \nu)$ and $\Gamma \c (Z, \xi)$ be ergodic, free and p.m.p.\ actions.
Suppose that the equivalence relation associated with the action $\Gamma \c (Z, \xi)$ is treeable.
Let $C\times \Gamma$ act on the product space $(Y\times Z, \nu \times \xi)$ coordinatewise.

Then there exist an ergodic, free and p.m.p.\ action $G\c (X, \mu)$ and an isomorphism of measure spaces, $f\colon (X, \mu)\to (Y\times Z, \nu \times \xi)$, such that
\begin{itemize}
\item the two actions $G\c (X, \mu)$ and $C\times \Gamma \c (Y\times Z, \nu \times \xi)$ are OE through $f$;
\item for any $c\in C$ and a.e.\ $x\in X$, we have $f(cx)=(c, e)f(x)$, where $e$ is the neutral element of $\Gamma$;
\item for any $g\in G$ and a.e.\ $x\in X$, we have $f(gx)=(c, q(g))f(x)$ for some $c\in C$, where $q\colon G\to \Gamma$ is the quotient map. 
\end{itemize}
\end{thm}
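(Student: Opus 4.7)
The plan is to take $X := Y \times Z$ with $\mu := \nu \times \xi$ and $f$ the identity map, then construct the $G$-action on $X$ by twisting the coordinatewise action via a cocycle extracted from Theorem~\ref{thm-split-groupoid}.

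First, I let $G$ act on $(Z,\xi)$ through the quotient map $q\colon G\to \Gamma$, so that $C$ acts trivially and the associated equivalence relation on $Z$ is the treeable one coming from $\Gamma \c Z$. Applying Theorem~\ref{thm-split-groupoid} yields an isomorphism $I\colon C\times (\Gamma\ltimes (Z,\xi))\to G\ltimes (Z,\xi)$ with $I(c_0,(e,z))=(c_0,z)$ and whose $\Gamma$-coordinate is $(q(g),z)$. Writing $I^{-1}(g,z)=(c(g,z),(q(g),z))$ defines a measurable map $c\colon G\times Z\to C$ which, because $I^{-1}$ is a groupoid homomorphism, satisfies the cocycle identity
\[c(g_1g_2,z)=c(g_1,q(g_2)z)\,c(g_2,z),\]
and the first compatibility condition above forces $c(c_0,z)=c_0$ for all $c_0\in C$ and a.e.\ $z\in Z$.

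Second, I define the $G$-action on $X$ by $g\cdot (y,z):=(c(g,z)\cdot y,\ q(g)\cdot z)$. The cocycle identity makes this an action, and the product measure is preserved by a direct Fubini computation using the p.m.p.\ hypotheses on each factor. Freeness goes in two stages: $g\cdot (y,z)=(y,z)$ forces $q(g)=e$ by freeness of $\Gamma \c Z$, hence $g\in C$; then $c(g,z)=g$ and $g\cdot y=y$ force $g=e$ by freeness of $C\c Y$. Ergodicity of the coordinatewise $C\times \Gamma$-action on $Y\times Z$ is a standard Fubini slicing argument from the ergodicity of each factor, and will transfer to the $G$-action once I identify the orbit equivalence relations.

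Third, with $f$ the identity, the two equivariance properties in the conclusion are immediate from the definition of the action together with $c(c_0,z)=c_0$. It remains to check orbit equivalence, i.e.\ that the $G$-orbit of $(y,z)$ equals its coordinatewise $C\times \Gamma$-orbit. One inclusion is visible from the definition. For the other, given $(c_0,\gamma)\in C\times \Gamma$ I pick any lift $g_0\in q^{-1}(\gamma)$ and set $g:=(c_0\cdot c(g_0,z)^{-1})\,g_0$, so that the cocycle identity yields $g\cdot (y,z)=(c_0 y,\gamma z)$. Given Theorem~\ref{thm-split-groupoid}, the whole argument is essentially bookkeeping and I do not anticipate any substantive obstacle; the key conceptual point is that the splitting isomorphism, read on its $C$-coordinate, is exactly the cocycle data one needs to twist the coordinatewise $C\times \Gamma$-action into a free action of the central extension $G$ without altering the orbit equivalence relation.
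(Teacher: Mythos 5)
Your proposal is correct and is essentially the paper's own argument: both proofs apply the splitting theorem to $G\ltimes (Z, \xi)$ (with $G$ acting through $q$) and use the resulting $C$-valued cocycle over the $\Gamma$-action to twist the coordinatewise $C\times \Gamma$-action on $Y\times Z$ into a $G$-action with the same orbits. The only difference is presentational: the paper packages the orbit-equivalence verification as a measure-equivalence coupling $Y\times Z\times G$ with a common fundamental domain $Y\times Z\times \{ 1_G\}$, whereas you check the action axioms, freeness, and the orbit identity directly from the cocycle relation --- but the induced $G$-action on $Y\times Z$ is the same in both cases.
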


\begin{proof}
Let $G$ act on $(Z, \xi)$ through the map $q$.
We set $\calg =G\ltimes (Z, \xi)$ and $\calq =\Gamma \ltimes (Z, \xi)$.
By Theorem \ref{thm-section}, we have a subgroupoid $\calg_1$ of $\calg$ and an isomorphism $I\colon C\times \calg_1\to \calg$ such that $I(c, g)=cg$ for any $c\in C$ and a.e.\ $g\in \calg_1$.
Let $\theta_1\colon \calg_1\to \calq$ be the restriction of the quotient map from $\calg$ onto $\calq$.
The map $\theta_1$ is an isomorphism.

We also define the p.m.p.\ action $C\times \Gamma \c (Z, \xi)$ through the projection from $C\times \Gamma$ onto $\Gamma$, and set $\calh =(C\times \Gamma)\ltimes (Z, \xi)$.
We then have the isomorphism $J\colon \calh \to \calg$ defined by
\[J((c, \gamma), z)=I(c, \theta_1^{-1}(\gamma, z))\quad \textrm{for}\ c\in C,\ \gamma \in \Gamma \ \textrm{and}\ z\in Z.\]

We construct a measure space $(\Sigma, m)$ as follows:
We set $\Sigma =Y\times Z\times G$, and define $m$ as the product measure of $\nu$, $\xi$ and the counting measure on $G$.
Let $\alpha \colon \calh \to G$ be the homomorphism defined as the composition of $J$ with the projection from $\calg$ onto $G$.
We define a measure-preserving action $(C\times \Gamma)\times G\c (\Sigma, m)$ by
\[((c, \gamma), g)(y, z, h)=(cy, \gamma z, \alpha((c, \gamma), z)hg^{-1})\]
for $c\in C$, $\gamma \in \Gamma$, $g, h\in G$, $y\in Y$ and $z\in Z$.
We denote by $1_{C\times \Gamma}$ and $1_G$ the neutral elements of $C\times \Gamma$ and $G$, respectively.
The subset $Y\times Z\times \{ 1_G\}$ of $\Sigma$ is a fundamental domain for both the actions of $(C\times \Gamma) \times \{ 1_G\}$ and $\{ 1_{C\times \Gamma}\} \times G$ on $(\Sigma, m)$.
It follows that $(\Sigma, m)$ is a measure-equivalence coupling of $C\times \Gamma$ and $G$.

We set $X=Y\times Z\times \{ 1_G\}$.
We have the natural p.m.p.\ action of $G$ on $X$ because $X$ is identified with the quotient space $\Sigma /((C\times \Gamma)\times \{ 1_G\})$.
We also have the p.m.p.\ action of $C\times \Gamma$ on $X$ because $X$ is identified with the quotient space $\Sigma /(\{ 1_{C\times \Gamma}\} \times G)$.
This action is isomorphic to the coordinatewise action $C\times \Gamma \c Y\times Z$ under the projection from $X$ onto $Y\times Z$.
The two actions $G\c X$ and $C\times \Gamma \c X$ are OE under the identity map on $X$.
Since the latter action is ergodic and free, so is the former action.
For any $c\in C$, the automorphism of $X$ induced by $c$ as an element of $G$ and that induced by $(c, e)$ are equal.
The second condition in Theorem \ref{thm-split} holds.
The third condition holds because for any $c\in C$, any $\gamma \in \Gamma$ and a.e.\ $z\in Z$, we have $q\circ \alpha((c, \gamma), z)=\gamma$.
\end{proof}


\section{The Haagerup property of discrete measured groupoids}\label{sec-hap}

Throughout this section, we fix a standard probability space $(X, \mu)$.

\subsection{Hilbert bundles and representations}

We review a Hilbert bundle over $X$ and a representation of a groupoid on it, following \cite[Sections 2 and 3]{ana-t}, \cite[Section 2]{ramsay} and \cite[Appendix F]{wil}.

Let $\sh =\{ \sh_x\}_{x\in X}$ be a family of separable Hilbert spaces indexed by elements of $X$.
We define $X\ast \sh$ as the set of pairs $(x, v)$ of $x\in X$ and $v\in \sh_x$, and define $p\colon X\ast \sh \to X$ as the projection sending each $(x, v)\in X\ast \sh$ to $x$.
We often identify the fiber $p^{-1}(x)=\{ x\} \times \sh_x$ with $\sh_x$ if there is no confusion.
A map $\sigma \colon X \to X\ast \sh$ with $p(\sigma_x)=x$ for any $x\in X$ is called a {\it section} of $X\ast \sh$.
A section $\sigma$ is called {\it normalized} if $\Vert \sigma_x\Vert =1$ for any $x\in X$.

\begin{defn}[\ci{Definition 2.2}{ana-t}, \ci{p.264}{ramsay}, \ci{Definition F.1}{wil}]\label{defn-hb}
Let $\sh =\{ \sh_x\}_{x\in X}$ be a family of separable Hilbert spaces with $p\colon X\ast \sh \to X$ the projection defined above.
We mean by a {\it Hilbert bundle} over $X$ is such a set $X\ast \sh$ equipped with a standard Borel structure satisfying the following conditions (1) and (2):
\begin{enumerate}
\item[(1)] A subset $E$ of $X$ is Borel if and only if $p^{-1}(E)$ is Borel in $X\ast \sh$.
\item[(2)] There exists a sequence $(\sigma^n)_n$ of Borel sections of $X\ast \sh$ such that
\begin{enumerate}
\item[(a)] for any $n$, the function $\bar{\sigma}^n$ on $X\ast \sh$ defined by $\bar{\sigma}^n(x, v)=\langle \sigma^n_x, v\rangle$ for $(x, v)\in X\ast \sh$ is Borel, where $\langle \cdot, \cdot \rangle$ is inner product of $\sh_x$;
\item[(b)] for any $n$ and $m$, the function $x\mapsto \langle \sigma^n_x, \sigma^m_x\rangle$ on $X$ is Borel; and
\item[(c)] the family of the functions $\bar{\sigma}^n$ for $n$ and the map $p$ separates points of $X\ast \sh$.
\end{enumerate}
\end{enumerate}
The sequence $(\sigma^n)_n$ is called a {\it fundamental sequence} for $X\ast \sh$.
\end{defn}

It is known that a section $\sigma \colon X\to X\ast \sh$ is Borel if and only if for any $n$, the function $x\mapsto \langle \sigma_x, \sigma^n_x\rangle$ on $X$ is Borel (\cite[Remark F.3]{wil}).
For a Hilbert bundle, we use the symbol $X\ast \sh$ or simply $\sh$ if there is no confusion.
Unless otherwise mentioned, a section of a Hilbert bundle is assumed to be Borel.
For a given family $\sh =\{ \sh_x\}_{x\in X}$, to equip the set $X\ast \sh$ with a standard Borel structure, we use the following:

\begin{prop}[\ci{Proposition F.8}{wil}]\label{prop-hb}
Suppose that we have a family $\sh =\{ \sh_x\}_{x\in X}$ of separable Hilbert spaces indexed by elements of $X$ and have a sequence $(\sigma^n)_n$ of sections of $X\ast \sh$ such that conditions (b) and (c) in Definition \ref{defn-hb} hold.
Then there exists a unique standard Borel structure on $X\ast \sh$ such that $X\ast \sh$ is a Hilbert bundle over $X$ and $(\sigma^n)_n$ is its fundamental sequence. 
\end{prop}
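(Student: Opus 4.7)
The plan is to realize $X\ast \sh$ as a Borel subset of the standard Borel space $X\times \C^{\N}$ by means of an explicit coordinate embedding, and then to transfer the Borel structure.

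First I would define
\[\Phi \colon X\ast \sh \to X\times \C^{\N},\qquad \Phi(x,v) = \bigl(x,(\langle \sigma^n_x, v\rangle)_{n\in \N}\bigr).\]
Injectivity of $\Phi$ is immediate from hypothesis (c): if $\Phi(x,v)=\Phi(x,v')$, then $v-v'$ is orthogonal to every $\sigma^n_x$, and (c) applied to the two points $(x,v)$ and $(x,v')$ forces $v=v'$. The same reasoning, applied to $(x,0)$ against an arbitrary $(x,w)$ with $w$ in the orthogonal complement of the span, shows that $\{\sigma^n_x\}_n$ has dense linear span in $\sh_x$ for every $x$.

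Next I would characterize the image of $\Phi$ by a Borel condition. Let $\Sigma$ denote the set of $(x,(a_n)_n) \in X\times \C^{\N}$ such that the conjugate-linear map sending $\sigma^n_x \mapsto a_n$, extended linearly on the dense span, is well-defined and bounded on $\sh_x$; by the Riesz representation theorem and the density just observed, this set coincides with $\Phi(X\ast \sh)$. Boundedness can be encoded as the countable condition that
\[\Bigl|\sum_{i\in F} \overline{c_i}\, a_i\Bigr|^{2} \le K \sum_{i,j\in F} c_i\, \overline{c_j}\, \langle \sigma^j_x, \sigma^i_x\rangle\]
holds for every finite $F\subset \N$ and every Gaussian-rational tuple $(c_i)_{i\in F}$, for some positive integer $K$ depending on $(x,(a_n))$. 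Since (b) makes each Gram entry $\langle \sigma^j_x, \sigma^i_x\rangle$ Borel in $x$, the set $\Sigma$ is a Borel subset of $X\times \C^{\N}$, hence itself a standard Borel space.

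Equipping $X\ast \sh$ with the unique Borel structure that turns $\Phi$ into a Borel isomorphism onto $\Sigma$, the axioms of Definition \ref{defn-hb} are routine to check: (1) reduces to $p^{-1}(E) = \Phi^{-1}(E\times \C^{\N})$; (2a) holds because $\bar{\sigma}^n$ is the composition of $\Phi$ with the $n$-th coordinate projection; and (2b), (2c) are verbatim the hypotheses. For uniqueness, any other standard Borel structure satisfying the conclusion forces $\Phi$ to be a Borel injection between standard Borel spaces, so the Lusin--Souslin theorem promotes $\Phi$ to a Borel isomorphism onto a Borel image, which must coincide with $\Sigma$; consequently the pulled-back structure agrees with the one already constructed. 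The main obstacle I anticipate is the second step: encoding the extendability of a putative Riesz representative as countably many Borel conditions on the Gram data, and verifying that this encoding cuts out precisely the image of $\Phi$ rather than a strictly larger set.
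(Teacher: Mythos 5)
Your proof is correct and realizes exactly the structure the paper records from \cite{wil}: the pullback $\sigma$-algebra along your $\Phi$ is precisely ``the weakest measurable structure such that the functions $\bar{\sigma}^n$ and the projection $p$ are measurable,'' and your identification of the image with the Borel set $\Sigma$ (plus Lusin--Souslin for uniqueness) supplies the standardness argument that the paper delegates to Williams. The only points glossed over are routine: the ``only if'' direction of axiom (1) follows by composing with the zero section $x\mapsto (x,0,0,\ldots)\in\Sigma$, and the Gram-data inequality must be read with the quantifier ``for some $K$'' outermost, then extended from Gaussian-rational to all complex tuples by continuity.
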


Indeed, in view of the proof of loc.\ cit., the standard Borel structure on $X\ast \sh$ is defined as the weakest measurable structure such that the functions $\bar{\sigma}^n$ for $n$ and the projection $p\colon X\ast \sh \to X$ are measurable, where $\bar{\sigma}^n$ is defined similarly to that in condition (a) in Definition \ref{defn-hb}.

\begin{rem}
By Proposition \ref{prop-hb}, given a countable family $(\sh^n)_n$ of Hilbert bundles over $X$, we naturally obtain their direct sum $\bigoplus_n\sh^n$, the Hilbert bundle over $X$ whose fiber at $x\in X$ is the direct sum $\bigoplus_n\sh^n_x$.
\end{rem}

For a Hilbert bundle $\sh$ over $X$, we denote by ${\rm Iso}(\sh)$ the groupoid on $X$ whose elements are triples $(x, V, y)$ with $x, y\in X$ and $V$ a Hilbert space isomorphism from $\sh_y$ onto $\sh_x$, and whose product is defined by $(x, V, y)(y, W, z)=(x, V\circ W, z)$.
We equip ${\rm Iso}(\sh)$ with the weakest measurable structure such that for any $n$ and $m$, the function $(x, V, y)\mapsto \langle V\sigma^n_y, \sigma^m_x\rangle$ on ${\rm Iso}(\sh)$ is measurable, where $(\sigma^n)_n$ is a fundamental sequence for $\sh$.

Let $\calg$ be a discrete measured groupoid on $(X, \mu)$.
We mean by a {\it representation} of $\calg$ on a Hilbert bundle $\sh$ over $X$ a measurable homomorphism $\pi \colon \calg \to {\rm Iso}(\sh)$ inducing the identity on $X$.
For an element $g\in \calg$ whose range and source are $y$ and $x$, respectively, the element $\pi (g)\in {\rm Iso}(\sh)$ is written as $(y, \bar{\pi}(g), x)$, where $\bar{\pi}(g)$ is a Hilbert space isomorphism from $\sh_x$ onto $\sh_y$.
To lighten the symbols, we identify $\pi(g)$ with $\bar{\pi}(g)$.
We then have $\pi(gh)=\pi(g)\pi(h)$ for a.e.\ $(g, h)\in \calg^{(2)}$, and have $\pi(g^{-1})=\pi(g)^{-1}$ for a.e.\ $g\in \calg$.

In the rest of this subsection, we discuss the following two properties of representations of $\calg$, the $c_0$-property and the weak containment of the trivial representation.

\begin{defn}
Let $\calg$ be a discrete measured groupoid on $(X, \mu)$ and $\pi$ a representation of $\calg$ on a Hilbert bundle $\sh$ over $X$.
We say that $\pi$ is $c_0$ (or is a {\it $c_0$-representation}) if for any normalized sections $u$, $v$ of $\sh$, any $\delta>0$ and any $\varepsilon >0$, there exists a measurable subset $Y$ of $X$ such that $\mu(X\setminus Y)<\delta$ and for a.e.\ $x\in Y$, we have
\[|\{ \, g\in (\calg|_Y)_x\mid |\langle \pi(g)u_x, v_{r(g)}\rangle |>\varepsilon \, \}|<\infty.\]
\end{defn}

\begin{lem}\label{lem-c0-pre-check}
Let $\calg$ be a discrete measured groupoid on $(X, \mu)$ and $\pi$ a representation of $\calg$ on a Hilbert bundle $\sh$ over $X$.
Then $\pi$ is $c_0$ if there exists a countable family $(e^n)_{n\in \N}$ of sections of $\sh$ satisfying the following conditions (1) and (2):
\begin{enumerate}
\item[(1)] For a.e.\ $x\in X$, the subset $(e^n_x)_{n\in \N}$ is total in $\sh_x$.
\item[(2)] For any $n, m\in \N$, any $\delta>0$ and any $\varepsilon >0$, there exists a measurable subset $Y$ of $X$ such that $\mu(X\setminus Y)<\delta$ and for a.e.\ $x\in Y$, we have
\[|\{ \, g\in (\calg|_Y)_x\mid |\langle \pi(g)e^n_x, e^m_{r(g)}\rangle |>\varepsilon \, \}|<\infty.\]
\end{enumerate}
\end{lem}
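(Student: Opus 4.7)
The plan is to reduce the general estimate to hypothesis (2) by approximating arbitrary normalized sections $u, v$ on a set of large measure by finite linear combinations of the $e^n$ with \emph{uniformly bounded} coefficients, and then expanding the matrix coefficient $\langle \pi(g)u_x, v_{r(g)}\rangle$ in terms of finitely many matrix coefficients of the form $\langle \pi(g) e^i_x, e^j_{r(g)}\rangle$. The first step is to apply a measurable Gram--Schmidt procedure to $(e^n)$, skipping indices at which $e^n_x$ already lies in the closed span of $e^1_x, \ldots, e^{n-1}_x$, to produce orthonormal sections $(f^n)$ with the same total closed span as $(e^n)$ at a.e.\ $x$, together with measurable coefficients $c^n_j : X \to \C$ such that $f^n_x = \sum_{j \leq n} c^n_j(x) e^j_x$.

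Given normalized $u, v$ and $\delta, \varepsilon > 0$, fix $\eta > 0$ (to be chosen) and expand $u_x = \sum_n a_n(x) f^n_x$, $v_x = \sum_n b_n(x) f^n_x$, where $a_n, b_n$ are measurable with $|a_n(x)|, |b_n(x)| \leq 1$. Since the partial sums converge in norm a.e., Egorov's theorem produces $N \in \N$ and a measurable $A_1 \subset X$ with $\mu(X \setminus A_1) < \delta/3$ such that on $A_1$ both $u_x$ and $v_x$ are approximated within $\eta$ by their partial sums up to index $N$. Rewriting these partial sums as $\sum_{j \leq N} \alpha_j(x) e^j_x$ with $\alpha_j(x) = \sum_{n=j}^N a_n(x) c^n_j(x)$ and similarly $\beta_j(x)$ for $v$, the coefficients $\alpha_j, \beta_j$ are measurable and a.e.\ finite, so by Luzin's theorem there is $A_2 \subset X$ with $\mu(X \setminus A_2) < \delta/3$ on which $|\alpha_j(x)|, |\beta_j(x)| \leq M$ for all $j \leq N$ and some constant $M$.

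For $x \in A_1 \cap A_2$ and any $g$ with $r(g) \in A_1 \cap A_2$, unitarity of $\pi(g)$ and Cauchy--Schwarz give
\[\Bigl| \langle \pi(g) u_x, v_{r(g)}\rangle - \sum_{i, j \leq N} \overline{\beta_j(r(g))}\, \alpha_i(x)\, \langle \pi(g) e^i_x, e^j_{r(g)}\rangle \Bigr| \leq 2\eta.\]
Choose $\eta < \varepsilon/4$. Then whenever $|\langle \pi(g) u_x, v_{r(g)}\rangle| > \varepsilon$, some pair $(i, j)$ with $i, j \leq N$ satisfies $|\langle \pi(g) e^i_x, e^j_{r(g)}\rangle| > \varepsilon' := \varepsilon/(2 N^2 M^2)$. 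Applying hypothesis (2) to each of the $N^2$ pairs with tolerance $\varepsilon'$ and measure defect $\delta/(3N^2)$ yields sets $B_{i,j}$; then $Y := A_1 \cap A_2 \cap \bigcap_{i,j \leq N} B_{i,j}$ satisfies $\mu(X \setminus Y) < \delta$, and for a.e.\ $x \in Y$ the fiber $\{g \in (\calg|_Y)_x : |\langle \pi(g) u_x, v_{r(g)}\rangle| > \varepsilon\}$ is contained in the finite union over $(i, j)$ of the (finite) exceptional sets supplied by (2), hence is finite.

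The only delicate step is the measurable implementation of Gram--Schmidt together with the uniform bound on $\alpha_j, \beta_j$; once Luzin's theorem is invoked this is routine, but one must be careful to perform the orthonormalization so as to handle fibers on which consecutive $e^n_x$ happen to be linearly dependent, and to verify that the $c^n_j$ are genuinely measurable on a conull Borel set. All other ingredients --- Egorov, Cauchy--Schwarz, and the finite intersection --- are standard.
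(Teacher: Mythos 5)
Your argument is correct and follows essentially the same route as the paper's proof: a measurable Gram--Schmidt orthonormalization, truncation of the resulting orthonormal expansions of $u$ and $v$ on a set of large measure, a uniform bound on the (measurable) coefficients relating the orthonormalized sections back to the $e^j$, and a finite union of the exceptional sets supplied by hypothesis (2). The only difference is organizational --- the paper first proves the lemma under an added orthonormality hypothesis and then transfers condition (2) from $(e^n)$ to the orthonormalized family, whereas you expand $u,v$ in the orthonormal family and convert back to the $e^j$ before invoking (2) --- which does not change the substance.
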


\begin{proof}
We first prove that $\pi$ is $c_0$ if $(e^n)_{n\in \N}$ further satisfies the following conditions (a) and (b):
\begin{enumerate}
\item[{\rm (a)}] For a.e.\ $x\in X$ and any $n\in \N$, we have either $\Vert e^n_x\Vert =1$ or $e^n_x=0$.
\item[{\rm (b)}] For a.e.\ $x\in X$, the unit vectors in the set $(e^n_x)_{n\in \N}$ form an orthonormal basis of $\sh_x$.
\end{enumerate}
Pick normalized sections $u$, $v$ of $\sh$ and numbers $\delta>0$ and $\varepsilon >0$.
By conditions (a) and (b), there exist a measurable subset $Y_1$ of $X$ and $N\in \N$ such that $\mu(X\setminus Y_1)<\delta /2$ and for a.e.\ $x\in Y_1$, we have
\[\left\Vert u_x-\sum_{n=1}^N\langle u_x, e^n_x\rangle e^n_x\right\Vert <\frac{\, \varepsilon \, }{4}\quad \textrm{and}\quad \left\Vert v_x-\sum_{n=1}^N\langle v_x, e^n_x\rangle e^n_x\right\Vert <\frac{\, \varepsilon \, }{4}.\]
By condition (2), there exists a measurable subset $Y_2$ of $X$ such that $\mu(X\setminus Y_2)<\delta /2$ and for any $n, m=1, 2,\ldots, N$ and a.e.\ $x\in Y_2$, we have
\[|\{ \, g\in (\calg|_{Y_2})_x\mid |\langle \pi(g)e^n_x, e^m_{r(g)}\rangle |>\varepsilon /(2N^2)\, \}|<\infty.\]
We set $Y=Y_1\cap Y_2$.
We have $\mu(X\setminus Y)<\delta$.
For a.e.\ $g\in \calg|_Y$, we have
\[|\langle \pi(g)u_{s(g)}, v_{r(g)}\rangle|<\frac{\, \varepsilon \, }{2}+\sum_{n, m=1}^N|\langle \pi(g)e_{s(g)}^n, e_{r(g)}^m\rangle |.\]
For a.e.\ $x\in Y$, we also have
\begin{align*}
& \{ \, g\in (\calg|_Y)_x \mid |\langle \pi(g)u_x, v_{r(g)}\rangle |>\varepsilon \, \} \\
\subset \ & \bigcup_{n, m=1}^N\{ \, g\in (\calg|_Y)_x \mid |\langle \pi(g)e_x^n, e_{r(g)}^m\rangle |>\varepsilon /(2N^2)\, \}.
\end{align*}
It follows that the set in the left hand side is finite.

We proved that $\pi$ is $c_0$ if $(e^n)_{n\in \N}$ satisfies conditions (a) and (b).
Suppose that $(e^n)_{n\in \N}$ satisfies only conditions (1) and (2).
Let $(f^n)_{n\in \N}$ be the sequence of sections of $\sh$ obtained by applying the Gram-Schmidt process to the sequence $(e^n)_{n\in \N}$.
The sequence $(f^n)_{n\in \N}$ then satisfies conditions (a) and (b).

We check that $(f^n)_{n\in \N}$ satisfies condition (2).
Fix $n, m\in \N$.
Pick $\delta >0$ and $\varepsilon >0$.
There exist measurable functions $\alpha^k, \beta^l\colon X\to \C$ indexed by $k=1,\ldots, n$ and $l=1,\ldots, m$ such that for a.e.\ $x\in X$, we have $f^n_x=\sum_{k=1}^n\alpha^k_xe^k_x$ and $f^m_x=\sum_{l=1}^m\beta^l_xe^l_x$.
By condition (2) for $(e^n)_{n\in \N}$, there exist a measurable subset $Y$ of $X$ and a number $M>0$ such that $\mu(X\setminus Y)<\delta$ and for any $k=1,\ldots, n$, any $l=1,\ldots, m$ and a.e.\ $x\in Y$, we have $|\alpha^k_x|\leq M$, $|\beta^l_x|\leq M$ and
\[|\{ \, g\in (\calg|_Y)_x\mid |\langle \pi(g)e^k_x, e^l_{r(g)}\rangle|> \varepsilon /(nmM^2)\, \}|<\infty.\]
For a.e.\ $x\in Y$, we then have
\begin{align*}
& \{ \, g\in (\calg|_Y)_x\mid |\langle \pi(g)f^n_x, f^m_{r(g)}\rangle|>\varepsilon \, \} \\
\subset \, & \bigcup_{k=1}^n\bigcup_{l=1}^m\, \{ \, g\in (\calg|_Y)_x\mid |\alpha^k_x\overline{\beta^l_{r(g)}}\langle \pi(g)e^k_x, e^l_{r(g)}\rangle|>\varepsilon /(nm)\, \} \\
\subset \, & \bigcup_{k=1}^n\bigcup_{l=1}^m\, \{ \, g\in (\calg|_Y)_x\mid |\langle \pi(g)e^k_x, e^l_{r(g)}\rangle|>\varepsilon /(nmM^2)\, \}.
\end{align*}
It follows that the set in the left hand side is finite.
Condition (2) for $(f^n)_{n\in \N}$ is proved.

The sequence $(f^n)_{n\in \N}$ satisfies conditions (a), (b) and (2).
The lemma is thus reduced to the case discussed in the first paragraph of the proof.
\end{proof}

\begin{lem}\label{lem-c0-check}
Let $\calg$ be a discrete measured groupoid on $(X, \mu)$ and $\pi$ a representation of $\calg$ on a Hilbert bundle $\sh$ over $X$.
Then $\pi$ is $c_0$ if there exists a countable family $(e^n)_{n\in \N}$ of sections of $\sh$ satisfying the following conditions (I) and (II):
\begin{enumerate}
\item[(I)] For a.e.\ $x\in X$, the subset $\{ \, \pi(g)e^n_{s(g)}\in \sh_x \mid g\in \calg^x,\ n\in \N \, \}$ is total in $\sh_x$.
\item[(II)] For any $n, m\in \N$, any $\delta>0$ and any $\varepsilon >0$, there exists a measurable subset $Y$ of $X$ such that $\mu(X\setminus Y)<\delta$ and for a.e.\ $x\in Y$, we have
\[|\{ \, g\in (\calg|_Y)_x\mid |\langle \pi(g)e^n_x, e^m_{r(g)}\rangle |>\varepsilon \, \}|<\infty.\]
\end{enumerate}
\end{lem}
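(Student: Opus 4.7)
The plan is to reduce Lemma \ref{lem-c0-check} to Lemma \ref{lem-c0-pre-check} by enlarging the family $(e^n)_{n\in\N}$ to a richer countable family whose fiberwise values are themselves total in $\sh_x$ for a.e.\ $x\in X$. The mechanism is to transport each $e^n$ along a countable collection of Borel bisections that exhausts the groupoid. Specifically, I would first choose a Borel partition $\calg=\bigsqcup_{k\in\N}T_k$ into measurable bisections---Borel subsets $T_k$ on which both $r$ and $s$ are injective---which exists for any discrete measured groupoid. Writing $A_k=r(T_k)$ and letting $h_k\colon A_k\to T_k$ denote the measurable inverse of $r|_{T_k}$, I define
\[f^{k,n}_x=\begin{cases}\pi(h_k(x))e^n_{s(h_k(x))}&\textrm{if }x\in A_k,\\ 0&\textrm{otherwise.}\end{cases}\]

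Condition (1) of Lemma \ref{lem-c0-pre-check} for $(f^{k,n})$ is immediate: every $g\in\calg^x$ lies in a unique $T_k$ and equals $h_k(x)$, so the nonzero vectors $f^{k,n}_x$ are precisely those listed in condition (I), whose span is total by assumption. Verifying condition (2) of Lemma \ref{lem-c0-pre-check} for a fixed quadruple $(k_1,n_1,k_2,n_2)$ rests on the identity
\[\langle\pi(g)f^{k_1,n_1}_x,f^{k_2,n_2}_{r(g)}\rangle=\langle\pi(g')e^{n_1}_{s(h_{k_1}(x))},e^{n_2}_{s(h_{k_2}(r(g)))}\rangle,\]
with $g'=h_{k_2}(r(g))^{-1}g\,h_{k_1}(x)$, valid whenever $x\in A_{k_1}$ and $r(g)\in A_{k_2}$ (the inner product vanishes otherwise). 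Bisectionality of $T_{k_1}$ and $T_{k_2}$ makes $g\mapsto g'$ injective, and $g'$ has fixed source $s(h_{k_1}(x))$. Given $\delta,\varepsilon>0$, I would apply condition (II) to $(n_1,n_2)$ to obtain a measurable $Y'\subset X$ with $\mu(X\setminus Y')$ small on which the required cardinality is finite at a.e.\ point, and then take $Y$ to be the intersection of $r(T_{k_1}\cap s^{-1}(Y'))\cup(X\setminus A_{k_1})$ and $r(T_{k_2}\cap s^{-1}(Y'))\cup(X\setminus A_{k_2})$. This forces both $s(h_{k_1}(x))$ and $s(h_{k_2}(r(g)))$ into $Y'$ for every $g\in(\calg|_Y)_x$ contributing a nonzero term, whence the count in question is dominated by the finite count for $g'\in(\calg|_{Y'})_{s(h_{k_1}(x))}$ guaranteed by condition (II).

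The main obstacle is the quantitative measure bookkeeping at the last step: passing from smallness of $\mu(X\setminus Y')$ to smallness of $\mu(X\setminus Y)$ requires control of $\mu(r(T_k\cap s^{-1}(B)))$ in terms of $\mu(B)$ for the relevant bisections. In the p.m.p.\ regime this is automatic since any bisection preserves the base measure, so the loss is at most $2\mu(X\setminus Y')$. In the general quasi-invariant setting one partitions each $T_k$ further into bisections on which the Radon--Nikodym derivative $d\tilde\mu/d\tilde\mu^{-1}$ is bounded, chooses $Y'$ small enough relative to these bounds, and absorbs the loss into $\delta$. Since Lemma \ref{lem-c0-pre-check} only requires verifying condition (2) one quadruple at a time, no summation of errors across $(k_1,n_1,k_2,n_2)$ is needed.
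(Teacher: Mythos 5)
Your proposal is correct and follows essentially the same route as the paper: both reduce to Lemma \ref{lem-c0-pre-check} by transporting the sections $e^n$ along a countable family of bisections covering $\calg$, using the conjugation identity $\langle\pi(g)f^{k_1,n_1}_x,f^{k_2,n_2}_{r(g)}\rangle=\langle\pi(g')e^{n_1}_{\cdot},e^{n_2}_{\cdot}\rangle$ and the injectivity of $g\mapsto g'$. The paper works with everywhere-defined sections $\phi_k$ of the range map for which $s\circ\phi_k$ is an automorphism of $X$, which avoids your partial-domain bookkeeping, and the measure estimate you worry about follows directly from the $\varepsilon$--$\delta$ form of absolute continuity of the pushforward measures, with no need to refine the partition by Radon--Nikodym bounds; these are only technical variations.
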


\begin{proof}
Pick a countable family $(\phi_k)_{k\in \N}$ of sections $\phi_k\colon X\to \calg$ of the range map $r$ such that the equation $\calg=\bigcup_{k\in \N}\phi_k(X)$ holds and for any $k\in \N$, the map $s\circ \phi_k\colon X\to X$ is an automorphism.
For any $k, l\in \N$, we have the measurable isomorphism $U_k^l\colon \calg\to \calg$ defined by
\[U_k^l(g)=\phi_l(r(g))^{-1}g\phi_k(s(g))\quad  \textrm{for}\ g\in \calg.\]
For $n, k\in \N$, we define a section $e^{n, k}\colon X\to \sh$ of $\sh$ by
\[e^{n, k}_x=\pi(\phi_k(x))e^n_{s\circ \phi_k(x)}\quad \textrm{for}\ x\in X.\]
We show that the family $(e^{n, k})_{n, k}$ satisfies conditions (1) and (2) in Lemma \ref{lem-c0-pre-check}.
Condition (1) follows from condition (I).

To prove that condition (2) holds, we fix $n, m, k, l\in \N$ and pick $\delta >0$ and $\varepsilon >0$.
By condition (II), there exists a measurable subset $Z$ of $X$ such that $\mu(X\setminus Y)<\delta$, where we set
\[Y= \{ \, x\in X \mid s\circ \phi_k(x)\in Z,\ s\circ \phi_l(x)\in Z \, \};\]
and for a.e.\ $x\in Z$, we have
\begin{equation}\label{phi-finite}
|\{ \, g\in (\calg|_Z)_x\mid |\langle \pi(g)e^n_x, e^m_{r(g)}\rangle |>\varepsilon  \, \}|<\infty.
\end{equation}
For a.e.\ $g\in \calg$, putting $y=r(g)$ and $x=s(g)$, we have
\[\langle \pi(g)e^{n, k}_x, e^{m, l}_y\rangle =\langle \pi(U_k^l(g))e^n_{s\circ \phi_k(x)}, e^m_{s\circ \phi_l(y)}\rangle.\]
It follows that for a.e.\ $x\in Y$, we have
\begin{align*}
& \{ \, g\in (\calg|_Y)_x\mid  |\langle \pi(g)e^{n, k}_x, e^{m, l}_{r(g)}\rangle | >\varepsilon \, \} \\
= \, & \{ \, g\in (\calg|_Y)_x \mid  |\langle \pi(U_k^l(g))e^n_{s\circ \phi_k(x)}, e^m_{s\circ \phi_l(r(g))}\rangle|>\varepsilon \, \}
\end{align*}
For any element $g$ in the right hand side, we have $U_k^l(g)\in \calg|_Z$.
By (\ref{phi-finite}), the right hand side is therefore finite.
Condition (2) for $(e^{n, k})_{n, k}$ follows.
By Lemma \ref{lem-c0-pre-check}, $\pi$ is $c_0$.
\end{proof}

We next recall the weak containment of the trivial representation.
This is used to define amenability and property (T) of a discrete measured groupoid (\cite{ana-t}, \cite{ar-book}).

\begin{defn}
Let $\calg$ be a discrete measured groupoid on $(X, \mu)$ and $\pi$ a representation of $\calg$ on a Hilbert bundle $\sh$ over $X$.
A sequence $(v^n)_n$ of normalized sections of $\sh$ is called {\it almost invariant} under $\pi(\calg)$ if for a.e.\ $g\in \calg$, we have
\[\Vert \pi(g)v^n_{s(g)}-v^n_{r(g)}\Vert \to 0 \quad \textrm{as}\ n\to \infty.\] 
We say that $\pi$ {\it contains the trivial representation weakly}, denoted by $1\prec \pi$, if there exists a sequence of normalized sections of $\sh$ almost invariant under $\pi(\calg)$.
\end{defn}


\subsection{Induced representations}

Let $\calg$ be a discrete measured groupoid on $(X, \mu)$ with $r, s\colon \calg \to X$ the range and source maps, respectively.
Let $\cals$ be a subgroupoid of $\calg$ and $\pi$ a representation of $\cals$ on a Hilbert bundle $\sh =\{ \sh_x\}_{x\in X}$ over $X$.
In this subsection, we construct a representation $\tilde{\pi}$ of $\calg$ on another Hilbert bundle $\tilde{\mathscr{H}} =\{ \tilde{\sh}_y\}_{y\in X}$ over $X$ canonically, as a generalization of usual induced representations of groups, and discuss its $c_0$-property and weak containment of the trivial representation.

For subsets $D$, $E$ of $\calg$ and an element $y\in X$, we set
\[DE =\{ \, gh\in \calg \mid g\in D,\ h\in E,\ s(g)=r(h)\, \} \quad \textrm{and}\quad D^y=D\cap \calg^y.\]
For $g\in \calg$, we denote $\{ g\} \cals$ by $g\cals$ simply.

We fix a measurable fundamental domain $D$ for right multiplication of $\cals$ on $\calg$.
That is, we fix a measurable subset $D$ of $\calg$ such that $D\cals =\calg$ and for any $y\in X$ and any distinct $g, h\in D^y$, the sets $g\cals$ and $h\cals$ are disjoint. 
For $y\in X$, we define $\tilde{\sh}_y$ as the set of maps $\xi \colon \calg^y\to \sh$ such that
\begin{itemize}
\item for any $g\in \calg^y$, we have $\xi(g)\in \sh_{s(g)}$;
\item for any $g\in \calg^y$ and any $h\in \cals^{s(g)}$, we have $\xi(gh)=\pi(h^{-1})\xi(g)$; and
\item we have $\sum_{g\in D^y}\Vert \xi(g)\Vert^2<\infty$.
\end{itemize}
By the second condition, any map $\xi \in \tilde{\sh}_y$ is determined by its restriction to $D^y$.
The space $\tilde{\sh}_y$ is a vector space by pointwise addition and scalar multiple, and is a separable Hilbert space by inner product $\langle \xi, \eta \rangle =\sum_{g\in D^y}\langle \xi(g), \eta(g)\rangle$ for $\xi, \eta \in \tilde{\sh}_y$.
We note that the space $\tilde{\sh}_y$ does not depend on the choice of the fundamental domain $D$.

\begin{lem}\label{lem-hb}
The family $\tilde{\sh}=\{ \tilde{\sh}_y\}_{y\in X}$ admits a structure of a Hilbert bundle over $X$.
\end{lem}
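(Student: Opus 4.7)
The plan is to invoke Proposition \ref{prop-hb}: it will suffice to exhibit a countable family of fiberwise sections of $X\ast \tilde{\sh}$ satisfying conditions (b) and (c) in Definition \ref{defn-hb}, after which the proposition will canonically produce the desired standard Borel structure. The key combinatorial input is a Borel partition $D=\bigsqcup_{k\in \N}D_k$ such that the range map $r$ is injective on each $D_k$; such a partition exists by a standard Lusin--Novikov-type selection because $\calg$ is a discrete Borel groupoid and hence $r|_D$ has countable fibers. Setting $X_k=r(D_k)$ and letting $\phi_k\colon X_k\to D_k$ be the (Borel) inverse of $r|_{D_k}$, one has $D^y=\{\phi_k(y):y\in X_k\}$ for a.e.\ $y\in X$, so the $\phi_k$ collectively parametrize the cosets of $\cals$ in $\calg$.

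Fixing a fundamental sequence $(\sigma^n)_n$ for $\sh$, for each pair $(n,k)\in \N\times \N$ I would define a section $\tilde{\sigma}^{n,k}$ whose value at $y\in X_k$ is the element of $\tilde{\sh}_y$ supported on the single coset $\phi_k(y)\cals$ and given there by
\[
\tilde{\sigma}^{n,k}_y(\phi_k(y)h)=\pi(h^{-1})\sigma^n_{s(\phi_k(y))}\quad \textrm{for}\ h\in \cals^{s(\phi_k(y))},
\]
and set $\tilde{\sigma}^{n,k}_y=0$ for $y\notin X_k$. Equivariance under right $\cals$-multiplication is built into this definition, and square-summability over $D^y$ is trivial because only the single term at $g=\phi_k(y)$ contributes, so $\tilde{\sigma}^{n,k}_y$ indeed lies in $\tilde{\sh}_y$.

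Next, I would verify condition (b) of Definition \ref{defn-hb} by computing $\langle \tilde{\sigma}^{n,k}_y,\tilde{\sigma}^{m,l}_y\rangle$: when $k\neq l$ the cosets $\phi_k(y)\cals$ and $\phi_l(y)\cals$ are disjoint, so the inner product vanishes identically; when $k=l$ and $y\in X_k$ it collapses to $\langle \sigma^n_{s\circ \phi_k(y)},\sigma^m_{s\circ \phi_k(y)}\rangle$, which is Borel in $y$ because $s\circ \phi_k$ is Borel and the fiberwise inner products of $(\sigma^n)_n$ are Borel by hypothesis. For condition (c), since the projection $p$ already separates points in distinct fibers of $X\ast \tilde{\sh}$, it suffices to show that $\{\tilde{\sigma}^{n,k}_y\}_{n,k}$ is total in $\tilde{\sh}_y$ for a.e.\ $y$: if $\xi\in \tilde{\sh}_y$ is orthogonal to every $\tilde{\sigma}^{n,k}_y$, then for each $k$ with $y\in X_k$ and each $n$ one has $\langle \xi(\phi_k(y)),\sigma^n_{s\circ \phi_k(y)}\rangle =0$, forcing $\xi(\phi_k(y))=0$ by totality of $(\sigma^n)_n$ in each fiber of $\sh$; hence $\xi$ vanishes on all of $D^y$ and therefore on all of $\calg^y$ by the defining $\cals$-equivariance of $\tilde{\sh}_y$.

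The main technical point is the initial Borel cross-section $\bigsqcup_k\phi_k$ of $r|_D$, which rests essentially on $\calg$ being a discrete Borel groupoid; once the $\phi_k$ are in hand, the construction is a routine bundle-theoretic translation of the classical induced representation for groups, the fundamental sequence of $\sh$ being pulled back along $s\circ \phi_k$ and then extended to sections of $\tilde{\sh}$ using $\cals$-equivariance.
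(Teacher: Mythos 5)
Your proof is correct and takes essentially the same approach as the paper: both decompose the fundamental domain $D$ into the graphs of countably many Borel sections of the range map, define candidate fundamental sections of $\tilde{\sh}$ supported on a single point of $D^y$ and carrying the fundamental sequence of $\sh$ there (extended by $\cals$-equivariance), and then invoke Proposition \ref{prop-hb}. The only difference is that you write out the verification of conditions (b) and (c) of Definition \ref{defn-hb}, which the paper asserts without detail; your verification is correct.
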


\begin{proof}
Let $(\sigma^n)_{n\in \N}$ be a fundamental sequence for $\sh$.
Pick a countable family $(\phi_m)_{m\in M}$ of measurable sections $\phi_m\colon X_m\to \calg$ of the range map $r$, defined on a measurable subset $X_m$ of $X$, such that $D =\bigsqcup_{m\in M}\phi_m(X_m)$ and for any $m\in M$, the map $s\circ \phi_m\colon X_m\to X$ is injective.

For $n\in \N$ and $m\in M$, we define a section $\tau^{n, m}$ of $\tilde{\sh}$.
Pick $y\in X$.
We first define a map $\tau^{n, m}_y\colon D^y\to \sh$ so that $\tau^{n, m}_y(g)\in \sh_{s(g)}$ for any $g\in D^y$.
The definition depends on whether $y\in X_m$ or not.
If $y\in X_m$, then let $\tau^{n, m}_y$ send $\phi_m(y)$ to $\sigma^n_{s\circ \phi_m(y)}$, and send any other point of $D^y$ to $0$.
If $y\not\in X_m$, then let $\tau^{n, m}_y$ send any point of $D^y$ to $0$. 
Extend $\tau^{n, m}_y$ to the map from $\calg^y$ to $\sh$ equivariant under right multiplication of $\cals$.
We get the element of $\tilde{\sh}_y$, denoted by the same symbol $\tau^{n, m}_y$.

The family $(\tau^{n, m})_{n, m}$ satisfies conditions (b) and (c) in Definition \ref{defn-hb}.
By Proposition \ref{prop-hb}, we have a unique standard Borel structure on $X\ast \tilde{\sh}$ such that $X\ast \tilde{\sh}$ is a Hilbert bundle over $X$ and $(\tau^{n, m})_{n, m}$ is its fundamental sequence.
\end{proof}

A section of $\tilde{\sh}$ is naturally regarded as a map from $\calg$ to $\sh$.
More precisely, if $\xi \colon X\to \tilde{\sh}$ is a section of $\tilde{\sh}$, then we have the map $\bar{\xi}\colon \calg \to \sh$ defined by $\bar{\xi}(g)=\xi_{r(g)}(g)$ for $g\in \calg$.
This map $\bar{\xi}$ satisfies that
\begin{itemize}
\item for any $g\in \calg$, we have $\bar{\xi}(g)\in \sh_{s(g)}$;
\item for any $g\in \calg$ and any $h\in \cals^{s(g)}$, we have $\bar{\xi}(gh)=\pi(h^{-1})\bar{\xi}(g)$; and
\item for any $y\in X$, we have $\sum_{g\in D^y}\Vert \bar{\xi}(g)\Vert^2<\infty$.
\end{itemize}
Conversely, any map $\bar{\xi}\colon \calg \to \sh$ satisfying these three conditions arises from a section of $\tilde{\sh}$.
The section $\xi$ is measurable if and only if $\bar{\xi}$ is measurable.
This holds true because, with the notation in the proof of Lemma \ref{lem-hb}, for any $n$, $m$ and any $y\in X$, we have
\[\langle \xi_y, \tau_y^{n, m}\rangle =\sum_{g\in D^y}\langle \xi_y(g), \tau_y^{n, m}(g)\rangle =\begin{cases}
\langle \bar{\xi}(\phi_m(y)), \sigma^n_{s\circ \phi_m(y)}\rangle & \textrm{if}\ y\in X_m\\
0 & \textrm{otherwise}.
\end{cases}
\]
We often identify $\xi$ with this map $\bar{\xi}$, denoting it by the same symbol $\xi$.

We define a homomorphism $\tilde{\pi}\colon \calg \to {\rm Iso}(\tilde{\sh})$ so that for $g\in \calg$, the Hilbert space isomorphism $\tilde{\pi}(g)\colon \tilde{\sh}_{s(g)}\to \tilde{\sh}_{r(g)}$ is given by
\[(\tilde{\pi}(g)\xi)(h)=\xi(g^{-1}h) \quad \textrm{for}\ \xi \in \tilde{\sh}_{s(g)}\ \textrm{and}\ h\in \calg^{r(g)}.\]
We got the representation $\tilde{\pi}$ of $\calg$ on the Hilbert bundle $\tilde{\sh}$, and call it the representation of $\calg$ {\it induced} from $\pi$.

\begin{prop}\label{prop-c0}
Let $\calg$ be a discrete measured groupoid on $(X, \mu)$ and $\cals$ a subgroupoid of $\calg$.
Let $\pi$ be a $c_0$-representation of $\cals$ on a Hilbert bundle $\sh$ over $X$.
Then the representation $\tilde{\pi}$ of $\calg$ induced from $\pi$ is also $c_0$.
\end{prop}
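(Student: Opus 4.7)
The plan is to verify the two hypotheses of Lemma \ref{lem-c0-check} for $\tilde{\pi}$ by lifting a family of normalized sections of $\sh$ to sections of $\tilde{\sh}$ supported on the distinguished coset $\cals^y \subset \calg^y$. First I would choose the fundamental domain $D$ so that it contains the unit section $\{e_y : y\in X\}$; this is possible because each unit $e_y$ lies in $\cals^y$ and may be taken as the representative of that coset. I would also pick a countable family $(u^n)_{n\in \N}$ of normalized measurable sections of $\sh$ such that $\{u^n_x\}_n$ is total in $\sh_x$ for a.e.\ $x\in X$; such a family is obtained from the fundamental sequence of $\sh$ by a Gram--Schmidt procedure followed by renormalization.

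For each $n\in \N$, I would define a section $\hat u^n$ of $\tilde{\sh}$ by
\[
\hat u^n_y(h) = \begin{cases} \pi(h^{-1})u^n_y & \text{if } h\in \cals^y, \\ 0 & \text{if } h\in \calg^y\setminus \cals^y. \end{cases}
\]
The associated map $\calg \to \sh$ sends $g$ to $\pi(g^{-1})u^n_{r(g)}$ for $g\in \cals$ and to $0$ for $g\notin \cals$, which is Borel, so $\hat u^n$ is a measurable section of $\tilde{\sh}$ by the characterization following Lemma \ref{lem-hb}.

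It then remains to verify conditions (I) and (II) of Lemma \ref{lem-c0-check}. For (I), observe that for $g\in D^y$ and $n\in \N$, the vector $\tilde{\pi}(g)\hat u^n_{s(g)}\in \tilde{\sh}_y$ takes the value $u^n_{s(g)}$ at $g$ and vanishes at every other point of $D^y$; since $\tilde{\sh}_y$ is identified under evaluation at $D^y$ with the Hilbert direct sum $\bigoplus_{g\in D^y}\sh_{s(g)}$ and $\{u^n_{s(g)}\}_n$ is total in $\sh_{s(g)}$, the family $\{\tilde{\pi}(g)\hat u^n_{s(g)} : g\in \calg^y,\ n\in \N\}$ is total in $\tilde{\sh}_y$. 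For (II), a direct computation shows
\[
\langle \tilde{\pi}(g)\hat u^n_{s(g)}, \hat u^m_{r(g)}\rangle = \begin{cases} \langle \pi(g)u^n_{s(g)}, u^m_{r(g)}\rangle & \text{if } g\in \cals, \\ 0 & \text{if } g\notin \cals, \end{cases}
\]
the vanishing off $\cals$ coming from the fact that the supports of $\tilde{\pi}(g)\hat u^n_{s(g)}$ and $\hat u^m_{r(g)}$ inside $D^{r(g)}$ overlap precisely when $e_{r(g)}\in g\cals$, that is, exactly when $g\in \cals$. Applying the $c_0$-property of $\pi$ to the normalized sections $u^n$ and $u^m$ then yields the subset $Y\subset X$ demanded by condition (II) for $\tilde{\pi}$, because the relevant set of $g$ with large inner product is the same for $\tilde{\pi}$ and for $\pi$.

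I expect the main technical obstacle to be the bookkeeping of the inner-product computation on $\tilde{\sh}$ together with the measurability of $\hat u^n$ as a section of $\tilde{\sh}$; the key combinatorial observation that makes the argument collapse to the $c_0$-property of $\pi$ is that a right $\cals$-coset $g\cals$ meets the distinguished coset $\cals^{r(g)}$ only when $g$ already lies in $\cals$, which reduces the inner product on $\tilde{\sh}$ to a single term coming from $\pi$.
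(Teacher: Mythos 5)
Your proposal is correct and follows essentially the same route as the paper's proof: both lift a total family of sections of $\sh$ to sections of $\tilde{\sh}$ supported on $\cals$, observe that $\tilde{\pi}(g)$ applied to such a section is supported on the single coset $g\cals$ so that the matrix coefficients vanish off $\cals$ and reduce to those of $\pi$ on $\cals$, and then invoke Lemma \ref{lem-c0-check}. Your extra Gram--Schmidt normalization step is a reasonable (and slightly more careful) way to justify applying the $c_0$-property of $\pi$, which the paper applies directly to the fundamental sequence.
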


\begin{proof}
Let $\tilde{\sh}=\{ \tilde{\sh}_y\}_{y\in X}$ be the Hilbert bundle on which $\tilde{\pi}(\calg)$ acts.
Pick a fundamental sequence $(\sigma^n)_{n\in \N}$ for $\sh$.
For $n\in \N$, we define a section $\tilde{\sigma}^n$ of $\tilde{\sh}$ so that as a map from $\calg$ into $\sh$, we have $\tilde{\sigma}^n(g)=\pi(g^{-1})\sigma^n_{r(g)}$ for $g\in \cals$ and $\tilde{\sigma}^n(g)=0$ otherwise.

We check that the family $(\tilde{\sigma}^n)_{n\in \N}$ satisfies conditions (I) and (II) in Lemma \ref{lem-c0-check}.
If it is shown, then $\tilde{\pi}$ is $c_0$ by that lemma.
Let $D\subset \calg$ be a measurable fundamental domain for right multiplication of $\cals$ on $\calg$.
For any $y\in X$, the space $\tilde{\sh}_y$ is identified with the space of a map $\xi \colon D^y \to \sh$ such that $\xi(g)\in \sh_{s(g)}$ for any $g\in D^y$; and $\sum_{g\in D^y}\Vert \xi(g)\Vert^2<\infty$.
Fix $y\in X$.
For any $g\in D^y$, the element $\tilde{\pi}(g)\tilde{\sigma}^n_{s(g)}$ of $\tilde{\sh}_y$ is supported on the single point set $\{ g\}$ if it is regarded as a map from $D^y$.
We have $(\tilde{\pi}(g)\tilde{\sigma}^n_{s(g)})(g)=\tilde{\sigma}_{s(g)}^n(e_{s(g)})=\sigma_{s(g)}^n$, where for $x\in X$, we denote by $e_x$ the unit of $\calg$ at $x$.
For any $y\in X$, the set $\{ \, \tilde{\pi}(g)\tilde{\sigma}^n_{s(g)} \mid g\in D^y,\ n\in \N \, \}$ is thus total in $\tilde{\sh}_y$.
The family $(\tilde{\sigma}^n)_{n\in \N}$ satisfies condition (I) in Lemma \ref{lem-c0-check}.

To check condition (II), we fix $n, m\in \N$.
For any $g\in \calg \setminus \cals$, we have $\langle \tilde{\pi}(g)\tilde{\sigma}^n_{s(g)}, \tilde{\sigma}^m_{r(g)}\rangle =0$ because $\tilde{\sigma}^n$ and $\tilde{\sigma}^m$ are supported on $\cals$.
For any $g\in \cals$, denoting by $h$ the single element of $D^{r(g)}\cap \cals$, we have
\[\langle \tilde{\pi}(g)\tilde{\sigma}^n_{s(g)}, \tilde{\sigma}^m_{r(g)}\rangle=\langle \tilde{\sigma}^n(g^{-1}h), \tilde{\sigma}^m(h)\rangle =\langle \pi(g)\sigma^n_{s(g)}, \sigma^m_{r(g)}\rangle.\]
For any $\varepsilon >0$, the equation
\[\{ \, g\in \calg \mid |\langle \tilde{\pi}(g)\tilde{\sigma}^n_{s(g)}, \tilde{\sigma}^m_{r(g)}\rangle |>\varepsilon \, \} =\{ \, g\in \cals \mid |\langle \pi(g)\sigma^n_{s(g)}, \sigma^m_{r(g)}\rangle |>\varepsilon \, \}\]
therefore holds.
Condition (II) for $(\tilde{\sigma}^n)_{n\in \N}$ follows from that $\pi$ is $c_0$.
\end{proof}

\begin{prop}\label{prop-1}
Let $\calg$ be a discrete measured groupoid on $(X, \mu)$ and $\cals$ a subgroupoid of $\calg$.
Suppose that there exist an amenable discrete group $A$ and a homomorphism $\alpha \colon \calg \to A$ such that $\ker \alpha =\cals$ and either $A$ is cyclic or $\alpha$ is {\it class-surjective}, i.e., for a.e.\ $x\in X$, we have $\alpha(\calg_x)=A$.
Let $\pi$ be a representation of $\cals$ on a Hilbert bundle $\sh$ over $X$ with $1\prec \pi$.
We denote by $\tilde{\pi}$ the representation of $\calg$ induced from $\pi$.
Then $1\prec \tilde{\pi}$.
\end{prop}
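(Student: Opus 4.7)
The plan is to adapt the classical argument from the theory of induced representations of groups: if $1 \to H \to G \to A \to 1$ is exact with $A$ amenable and $1 \prec \pi$ for a representation $\pi$ of $H$, then $1 \prec \mathrm{Ind}_H^G \pi$ via averaging over Følner sets in $A$. The groupoid case requires extra care because almost invariance is a pointwise notion, so the Følner averaging must be coupled with a diagonal choice of $\pi(\cals)$-almost-invariant vectors.

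First I would fix a measurable fundamental domain $D \subset \calg$ for right multiplication of $\cals$ on $\calg$. Because $\cals = \ker\alpha$, the restriction of $\alpha$ to each fiber $D^y$ of the range map is injective with image $\alpha(\calg^y) \subset A$. Under class-surjectivity, $\alpha(\calg^y) = A$ for a.e.\ $y$, yielding a measurable bijection $A \ni a \mapsto d(y,a) \in D^y$; in the cyclic case, $\alpha(\calg^y)$ is a constant (abelian) subgroup on each ergodic component of $\calg$, and the same argument applies after replacing $A$ by this subgroup and $(F_n)$ by a Følner sequence in it. Pick a Følner sequence $(F_n)$ in $A$ and, invoking $1 \prec \pi$, a normalized sequence $(\tilde w^m)_m$ of sections of $\sh$ almost invariant under $\pi(\cals)$. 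For a subsequence $w^n = \tilde w^{m(n)}$ to be chosen, define sections $\xi^n$ of $\tilde{\sh}$ by
\[
\bar\xi^n_y(d(y,a)) = |F_n|^{-1/2}\,\mathbf{1}_{F_n}(a)\, w^n_{s(d(y,a))},\qquad a \in A,
\]
and extend to $\calg^y$ by $\cals$-equivariance; then $\|\xi^n_y\|=1$ for a.e.\ $y$.

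For $g \in \calg$ with $x=s(g)$, $y=r(g)$, $b=\alpha(g)$, I would write $g^{-1}d(y,a) = d(x,b^{-1}a)\,k^{g,a}$ with $k^{g,a}\in\cals$. Expanding $\|\tilde\pi(g)\xi^n_x - \xi^n_y\|^2$ as a sum over $D^y \cong A$ and splitting into cases by whether $a$ and $b^{-1}a$ lie in $F_n$ yields
\[
\|\tilde\pi(g)\xi^n_x - \xi^n_y\|^2 \le \frac{|F_n \triangle b F_n|}{|F_n|} + \frac{1}{|F_n|}\sum_{a\in F_n\cap bF_n}\|w^n_{r(k^{g,a})} - \pi(k^{g,a})w^n_{s(k^{g,a})}\|^2.
\]
The first summand tends to $0$ for every $g$ by the Følner property.

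The main obstacle is the second summand, an average of $O(|F_n|)$ $\pi(\cals)$-defects of $w^n$ along a $g$-dependent family of bisections of $\cals$. I would handle it by a diagonal choice of $m(n)$: enumerate countably many measurable bisections $(\phi_j)_j$ whose ranges cover $\calg$, so that for each $j$ and $a\in A$ the map $\psi_{j,a}\colon y\mapsto k^{\phi_j(y),a}$ is a measurable bisection of $\cals$. Pointwise a.e.\ convergence of the defect of $(\tilde w^m)$ along each $\psi_{j,a}$ combined with Egorov's theorem supplies indices $M(n,j,a)$ such that, for $m \ge M(n,j,a)$,
\[
\mu\bigl\{\,y:\|\tilde w^m_{r(\psi_{j,a}(y))} - \pi(\psi_{j,a}(y))\tilde w^m_{s(\psi_{j,a}(y))}\|^2 > 1/n\,\bigr\} \le 1/(n^2|F_n|).
\]
Pick $m(n)$ dominating $M(n,j,a)$ for all $j \le n$ and $a \in F_n$. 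Summing these exceptional sets over $a\in F_n$ produces, for each $j$, a bad set of measure $\le 1/n^2$, so by Borel–Cantelli almost every $y\in\mathrm{dom}(\phi_j)$ lies in it for only finitely many $n$; for such $y$ and all large $n$ every summand in the second term at $g=\phi_j(y)$ is $\le 1/n$, so the whole second term tends to $0$. Hence $\|\tilde\pi(g)\xi^n_{s(g)} - \xi^n_{r(g)}\|\to 0$ for a.e.\ $g$ along each $\phi_j$, and thus for a.e.\ $g\in\calg$, giving $1\prec\tilde\pi$.
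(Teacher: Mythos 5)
Your argument for the class-surjective case is essentially sound: the identification $D^y\cong A$, the splitting of $\Vert \tilde{\pi}(g)\xi^n_{s(g)}-\xi^n_{r(g)}\Vert^2$ into a F\o lner term and an averaged $\pi(\cals)$-defect, and the diagonal Egorov/Borel--Cantelli selection of $m(n)$ all work (modulo the routine check, via quasi-invariance of $\mu$, that the pushforward of $\mu$ under each $\psi_{j,a}$ is absolutely continuous with respect to $\tilde{\mu}|_{\cals}$, so that pointwise a.e.\ convergence of the defect along $\psi_{j,a}$ is legitimate). This differs from the paper mainly in bookkeeping: the paper integrates $|\langle \tilde{\pi}(g)\eta^l_{s(g)},\eta^l_{r(g)}\rangle-1|$ over an exhaustion of $\calg$ by finite-measure sets and chooses $n,m$ by dominated convergence, then applies Borel--Cantelli to the integrals, whereas you run the selection pointwise.

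The genuine gap is your treatment of the cyclic, non-class-surjective case. You assert that $\alpha(\calg^y)$ is ``a constant (abelian) subgroup on each ergodic component'' and that one can simply replace $A$ by it. This is false: $\alpha(\calg^y)$ is merely a subset of $A$ containing the identity, and for $g\in\calg$ one has $\alpha(\calg^{s(g)})=\alpha(g)^{-1}\alpha(\calg^{r(g)})$, so the sets are translated (not preserved) along orbits and need not be subgroups. For instance, for the full equivalence relation on a three-point space with $\alpha(i,j)=c_i-c_j$ and $c_1=0$, $c_2=1$, $c_3=5$, one gets $\alpha(\calg^1)=\{0,-1,-5\}$. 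Consequently the induced sections must be normalized fiberwise by $|F_n\cap\alpha(\calg^y)|^{-1/2}$, and controlling the cross terms requires that
\[
\frac{|(bF_n\bigtriangleup F_n)\cap S|}{|F_n\cap S|}\longrightarrow 0
\]
for \emph{arbitrary} non-empty subsets $S\subset A$ (applied to $S=\alpha(\calg^y)$), not just for $S=A$. This refined F\o lner property is exactly what the paper's Lemma \ref{lem-folner} provides for cyclic $A$ (using the intervals $F_n=[-n,n]$ in $\Z$), and it is the reason the hypothesis reads ``cyclic or class-surjective'' rather than ``amenable'': a general amenable group need not admit such a sequence. Your proposal is missing both the correct $y$-dependent normalization and this lemma, so the cyclic case does not go through as written.
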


To prove this proposition, we need an elementary fact on a F\o lner sequence.
Let $G$ be a discrete group.
Recall that a sequence $(F_n)_n$ of non-empty finite subsets of $G$ is called a {\it F\o lner sequence} of $G$ if for any $g\in G$, we have $|gF_n\bigtriangleup F_n|/|F_n|\to 0$ as $n\to \infty$.

\begin{lem}\label{lem-folner}
Let $A$ be a cyclic group.
Then there exists a F\o lner sequence $(F_n)_n$ of $A$ such that for any $n$, the set $F_n$ contains the neutral element of $A$; and for any non-empty subset $S$ of $A$ and for any $g\in A$, we have
\[\frac{\, |(gF_n\bigtriangleup F_n)\cap S|\, }{|F_n\cap S|}\to 0\quad \textrm{and}\quad \frac{\, |(gF_n\bigtriangleup F_n)\cap S|\, }{|gF_n\cap S|}\to 0\quad \textrm{as}\ n\to \infty.\]
\end{lem}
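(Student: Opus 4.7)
My plan is to split on whether $A$ is finite or isomorphic to $\Z$. If $A$ is finite, I would take $F_n=A$ for all $n$; then $gF_n=F_n=A$, so $gF_n\bigtriangleup F_n=\emptyset$ and both ratios are identically zero. The substantial case is $A=\Z$, for which I propose the standard interval F\o lner sequence $F_n=\{-n,-n+1,\ldots,n\}$, which contains the neutral element $0$ and is F\o lner in the usual sense.

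The key observation is that for any fixed $g\in\Z$ the symmetric difference $gF_n\bigtriangleup F_n$ consists of at most $2|g|$ integers --- concretely, for $g>0$ it is the union of the two intervals $\{-n,\ldots,g-n-1\}$ and $\{n+1,\ldots,g+n\}$, each of length $|g|$, lying near $\pm n$. In particular $|gF_n\bigtriangleup F_n|\leq 2|g|$ is bounded independently of $n$, while the denominators $|F_n\cap S|$ and $|gF_n\cap S|$ either grow without bound or else stabilize. Given this, I would fix a nonempty $S\subset\Z$ and a $g\in\Z$ and split according to whether $S$ is finite.

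If $S$ is finite, choose $N$ with $S\subset[-N,N]$; then for every $n\geq N+|g|$ we have $F_n\supset S$ and $gF_n\supset S$, whereas $gF_n\bigtriangleup F_n$ is disjoint from $[-N,N]$, so $(gF_n\bigtriangleup F_n)\cap S=\emptyset$, $|F_n\cap S|=|gF_n\cap S|=|S|>0$, and both ratios are eventually zero. If $S$ is infinite, then since $F_n$ and $gF_n$ each exhaust $\Z$, we have $|F_n\cap S|\to\infty$ and $|gF_n\cap S|\to\infty$, while the common numerator $|(gF_n\bigtriangleup F_n)\cap S|\leq 2|g|$ stays bounded in $n$; hence both ratios tend to zero.

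I expect no real obstacle here, as the whole mechanism is that the boundary of the interval F\o lner sequence has cardinality uniformly bounded in $n$, which obviates any quantitative estimate on how $S$ meets $F_n$. The only minor subtlety worth flagging is that one must split off the case of finite $S$: in that case the denominator does not grow, and convergence comes instead from $gF_n\bigtriangleup F_n$ eventually escaping $S$.
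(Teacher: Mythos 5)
Your proof is correct and follows essentially the same route as the paper: take $F_n=A$ in the finite case and the intervals $F_n=\{-n,\ldots,n\}$ when $A\cong\Z$, note that $|gF_n\bigtriangleup F_n|$ is bounded by $2|g|$ independently of $n$, and then split on whether $S$ is finite (the symmetric difference eventually misses $S$) or infinite (the denominators tend to infinity). No discrepancies to report.
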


\begin{proof}
If $A$ is finite, then it is enough to set $F_n=A$ for any $n$.
We assume $A=\Z$.
For $n\in \N$, let $F_n$ be the set of integers whose absolute values are at most $n$.
Pick $m\in \Z$.
For any $n\in \N$, we have $|(m+F_n)\bigtriangleup F_n|=2|m|$.
Let $S$ be a non-empty subset of $\Z$.
If $S$ is finite, then the set $((m+F_n)\bigtriangleup F_n)\cap S$ is empty for any sufficiently large $n$.
If $S$ is infinite, then $|F_n\cap S|\to \infty$ and $|(m+F_n)\cap S|\to \infty$ as $n\to \infty$.
The lemma follows.
\end{proof}

\begin{proof}[Proof of Proposition \ref{prop-1}]
Let $\tilde{\sh}=\{ \tilde{\sh}_y\}_{y\in X}$ denote the Hilbert bundle on which $\tilde{\pi}(\calg)$ acts.
We first assume that $A$ is cyclic.
By Lemma \ref{lem-folner}, we have a F\o lner sequence $(F_n)_{n\in \N}$ of $A$ such that for any $n\in \N$, the set $F_n$ contains the neutral element of $A$; and for any non-empty subset $S$ and any $g\in A$, we have
\[\frac{\, |(gF_n\bigtriangleup F_n)\cap S|\, }{|F_n\cap S|}\to 0\quad \textrm{and}\quad \frac{\, |(gF_n\bigtriangleup F_n)\cap S|\, }{|gF_n\cap S|}\to 0\quad \textrm{as}\ n\to \infty.\]
By the assumption $1\prec \pi$, there exists a sequence $(v^m)_{m\in \N}$ of normalized sections of $\sh$ such that for a.e.\ $h\in \cals$, we have $\Vert \pi(h)v^m_{s(h)}-v^m_{r(h)}\Vert \to 0$ as $m\to \infty$.

Let $D\subset \calg$ be a measurable fundamental domain for right multiplication of $\cals$ on $\calg$.
For $n, m\in \N$, we define a map $\xi^{n, m}\colon D\to \sh$ so that for $g\in D$, the element $\xi^{n, m}(g)\in \sh_{s(g)}$ is given by
\[\xi^{n, m}(g)=\begin{cases}
v^m_{s(g)}/|\alpha(\calg^{r(g)})\cap F_n|^{1/2} & \textrm{if}\ \alpha(g)\in F_n\\
0 & \textrm{otherwise}.
\end{cases}\]
We extend this map to the map $\xi^{n, m}\colon \calg \to \sh$ so that for any $g\in D$ and any $h\in \cals^{s(g)}$, the equation $\xi^{n, m}(gh)=\pi(h^{-1})\xi^{n, m}(g)$ holds.
For a.e.\ $y\in X$, we have $\sum_{g\in D^y}\Vert \xi^{n, m}(g)\Vert^2=1$.
It follows that $\xi^{n, m}$ is regarded as a normalized section of $\tilde{\sh}$.

Pick an increasing sequence $(E_l)_{l\in \N}$ of measurable subsets of $\calg$ such that $\tilde{\mu}(E_l)<\infty$ for any $l\in \N$ and $\calg=\bigcup_{l\in \N}E_l$.

Fix $l\in \N$.
By the Lebesgue convergence theorem, there exists $n\in \N$ with
\begin{align}\label{folner}
\int_{E_l}\frac{\, |(\alpha(g)F_n\bigtriangleup F_n)\cap \alpha(\calg^{r(g)})|\, }{|F_n\cap \alpha(\calg^{r(g)})|}\, d\tilde{\mu}(g) & \leq \frac{1}{\, l^2\, }\quad \textrm{and}\\
\int_{E_l}\frac{\, |(\alpha(g)F_n\bigtriangleup F_n)\cap \alpha(\calg^{r(g)})|\, }{|\alpha(g)F_n\cap \alpha(\calg^{r(g)})|}\, d\tilde{\mu}(g) & \leq \frac{1}{\, l^2\, }.\label{folnerg}
\end{align}
For any $(g, g')\in \calg^{(2)}$, there exists a unique $h\in \cals^{s(g')}$ with $gg'h\in D$.
This element $h$ is denoted by $h(g, g')$.
By the Lebesgue convergence theorem, there exists $m\in \N$ with
\begin{equation}\label{h}
\int_{E_l}\sum_{g'\in D^{r(g)}\cap \, \alpha^{-1}(F_n)}|\langle \pi(h(g^{-1}, g'))v^m_{s(h(g^{-1}, g'))}, v^m_{s(g')}\rangle -1|\, d\tilde{\mu}(g)\leq \frac{1}{\, 2l^2\, }.
\end{equation}
We denote by $\eta^l$ the normalized section $\xi^{n, m}$ of $\tilde{\sh}$ with $n$ and $m$ chosen above.

Fix $g\in E_l$ and put $y=r(g)$ and $x=s(g)$.
For any $g'\in D^y$, putting $h=h(g^{-1}, g')$, we have
\begin{align*}
(\tilde{\pi}(g)\eta^l_x)(g') & =\eta^l_x(g^{-1}g')=\pi(h)\eta^l_x(g^{-1}g'h)\\
& =\begin{cases}
\pi(h)v^m_{s(h)}/|\alpha(\calg^x)\cap F_n|^{1/2} & \textrm{if}\ \alpha(g^{-1}g')\in F_n\\
0 & \textrm{otherwise},
\end{cases}
\end{align*}
and have
\[\eta^l_y(g')=\begin{cases}
v_{s(g')}^m/|\alpha(\calg^y)\cap F_n|^{1/2} & \textrm{if}\ \alpha(g')\in F_n\\
0 & \textrm{otherwise}.
\end{cases}\]
We set $A_1=\alpha(\calg^y)\cap \alpha(g)F_n=\alpha(g)(\alpha(\calg^x)\cap F_n)$ and $A_2=\alpha(\calg^y)\cap F_n$.
We also set
\[D_0=\{ \, g'\in D^y\mid \alpha(g')\in \alpha(g)F_n\cap F_n\, \}.\]
Setting $h(g')=h(g^{-1}, g')$ for $g'\in D^y$, we have
\begin{align*}
\langle \tilde{\pi}(g)\eta^l_x, \eta^l_y\rangle =\sum_{g'\in D^y}\langle (\tilde{\pi}(g)\eta^l_x)(g'), \eta^l_y(g')\rangle = \, \frac{1}{\, |A_1|^{1/2}|A_2|^{1/2}\, }\sum_{g'\in D_0}\langle \pi(h(g'))v^m_{s(h(g'))}, v^m_{s(g')}\rangle,
\end{align*}
and also have
\begin{align}\label{ineq-eta}
|\langle \tilde{\pi}(g)\eta^l_x, & \eta^l_y\rangle -1| \leq \left| 1- \frac{|D_0|}{\, |A_1|^{1/2}|A_2|^{1/2}\, }\right| + \sum_{g'\in D_0}\frac{\, |\langle \pi(h(g'))v^m_{s(h(g'))}, v^m_{s(g')}\rangle-1|\, }{|A_1|^{1/2}|A_2|^{1/2}}\\
& \leq \frac{1}{\, 4\, }\left( \frac{\, |A_1\bigtriangleup A_2|\, }{|A_1|}+\frac{\, |A_1\bigtriangleup A_2|\, }{|A_2|}\right) + \sum_{g'\in D_0}|\langle \pi(h(g'))v^m_{s(h(g'))}, v^m_{s(g')}\rangle-1|,\nonumber
\end{align}
where the last inequality holds because we have $|D_0|=|A_1\cap A_2|$ and have
\begin{align*}
0 & \, \leq 1-\frac{|A_1\cap A_2|}{\, |A_1|^{1/2}|A_2|^{1/2}\, } = 1 - \frac{\, |A_1|+|A_2|-|A_1\bigtriangleup A_2|\, }{2|A_1|^{1/2}|A_2|^{1/2}}\\
& \, \leq \frac{|A_1\bigtriangleup A_2|}{\, 2|A_1|^{1/2}|A_2|^{1/2}\, } \leq \frac{1}{\, 4\, }\left( \frac{\, |A_1\bigtriangleup A_2|\, }{|A_1|}+\frac{\, |A_1\bigtriangleup A_2|\, }{|A_2|}\right)
\end{align*}
by the inequality of arithmetic and geometric means.
Take integration of inequality (\ref{ineq-eta}) with respect to $g\in E_l$.
By inequalities (\ref{folner}), (\ref{folnerg}) and (\ref{h}), we obtain
\begin{equation}\label{ineq-int}
\int_{E_l}|\langle \tilde{\pi}(g)\eta^l_{s(g)}, \eta^l_{r(g)}\rangle -1|\, d\tilde{\mu}(g)\leq \frac{1}{\, l^2\, }.
\end{equation}

We therefore obtained the sequence $(\eta^l)_{l\in \N}$ of normalized sections of $\tilde{\sh}$ such that for any $k\in \N$, we have
\[\int_{E_k}|\langle \tilde{\pi}(g)\eta^l_{s(g)}, \eta^l_{r(g)}\rangle -1|\, d\tilde{\mu}(g)\leq \frac{1}{\, l^2\, }\]
for any $l\in \N$ with $l\geq k$.
It follows that for a.e.\ $g\in E_k$, we have $\langle \tilde{\pi}(g)\eta^l_{s(g)}, \eta^l_{r(g)}\rangle \to 1$ as $l\to \infty$ because the sum $\sum_{l=1}^\infty l^{-2}$ is convergent.
This pointwise convergence thus holds for a.e.\ $g\in \calg$.

We proved the proposition when $A$ is cyclic.
The proof of the case where $A$ is amenable and $\alpha$ is class-surjective is almost the same.
In fact, although we cannot use Lemma \ref{lem-folner}, for any F\o lner sequence $(F_n)_{n\in \N}$ of $A$, we can find $n\in \N$ satisfying inequalities (\ref{folner}) and (\ref{folnerg}) because $\alpha$ is class-surjective, i.e., we have $\alpha(\calg^{r(g)})=A$ for a.e.\ $g\in \calg$.
\end{proof}


\subsection{The Haagerup property and its generalization}

The Haagerup property (HAP) of groups was initially discovered by Haagerup \cite{h}.
On the basis of this work, the HAP was introduced for p.m.p.\ discrete measured equivalence relations by Jolissaint \cite{jol-h}, and was introduced for discrete measured groupoids by Anantharaman-Delaroche \cite{ana-h}.
For our purpose, we introduce a property of discrete measured groupoids similar to the HAPs in \cite{jol-h} and \cite{ana-h}, and call it the HAP in the present paper.
Comparison among these HAPs is discussed in Appendix \ref{sec-app}.

Let $\calg$ be a discrete measured groupoid on $(X, \mu)$.
A measurable function $\phi \colon \calg \to \C$ is called {\it positive definite} if for a.e.\ $x\in X$, any $g_1,\ldots, g_n\in \calg^x$ and any $\lambda_1,\ldots, \lambda_n\in \C$, we have $\sum_{i, j=1}^n\overline{\lambda}_i\lambda_j\phi(g_i^{-1}g_j)\geq 0$.
A positive definite function $\phi$ on $\calg$ is called {\it normalized} if $\phi(e_x)=1$ for a.e.\ $x\in X$, where $e_x$ is the unit of $\calg$ at $x$.
A measurable function $\phi$ on $\calg$ is called $c_0$ (or a {\it $c_0$-function}) if for any $\delta >0$ and any $\varepsilon >0$, there exists a measurable subset $Y$ of $X$ such that $\mu(X\setminus Y)<\delta$ and for a.e.\ $x\in Y$, we have
\[|\{ \, g\in (\calg|_Y)_x \mid |\phi(g)|>\varepsilon \, \}|<\infty.\]

Let $\pi$ be a representation of $\calg$ on a Hilbert bundle $\sh$ over $X$ and $v$ a section of $\sh$.
The function $g\mapsto \langle \pi(g)v_{s(g)}, v_{r(g)}\rangle$ on $\calg$ is then positive definite.
If $v$ is normalized, then the function is normalized.
If $\pi$ is $c_0$, then the function is $c_0$.

\begin{rem}\label{rem-pdf-basic}
Let $\phi \colon \calg \to \C$ be a positive definite function.
Then
\begin{enumerate}
\item[(i)] for a.e.\ $x\in X$, we have $\phi(e_x)\geq 0$; and
\item[(ii)] for a.e.\ $g\in \calg$, we have $\phi(g^{-1})=\overline{\phi(g)}$ and $|\phi(g)|^2\leq \phi(e_{r(g)})\phi(e_{s(g)})$.
\end{enumerate}
In fact, assertion (i) follows by definition.
Assertion (ii) holds because for a.e.\ $g\in \calg$, the matrix
\[\left(\begin{matrix}
\phi(e_{r(g)}) & \phi(g)\\
\phi(g^{-1}) & \phi(e_{s(g)})
\end{matrix}\right)\]
is non-negative.
\end{rem}

The following is a basic property of positive definite functions:

\begin{lem}[\ci{Lemma 13}{ana-h}]\label{lem-rest-pdf}
Let $\calg$ be a discrete measured groupoid on $(X, \mu)$ and $\cals$ a subgroupoid of $\calg$.
Let $\phi$ be a positive definite function on $\cals$.
Then the function $\psi \colon \calg \to \C$ defined by $\psi =\phi$ on $\cals$ and $\psi =0$ on $\calg \setminus \cals$ is positive definite on $\calg$.
\end{lem}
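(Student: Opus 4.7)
The plan is to verify positive definiteness of $\psi$ directly from the definition by partitioning the relevant tuples into right $\cals$-cosets. Fix $x \in X$ (in the complement of the null set where positive definiteness of $\phi$ fails), elements $g_1, \ldots, g_n \in \calg^x$, and scalars $\lambda_1, \ldots, \lambda_n \in \C$. The key observation is that $g_i^{-1} g_j \in \cals$ if and only if $g_i$ and $g_j$ lie in the same right coset $g_i \cals$ of $\calg^x$; this defines an equivalence relation on $\{1, \ldots, n\}$. Partition the indices into the resulting classes $I_1, \ldots, I_r$. Since $\psi$ vanishes off $\cals$, cross terms between distinct classes contribute zero, so
\[\sum_{i, j = 1}^n \overline{\lambda_i} \lambda_j \psi(g_i^{-1} g_j) = \sum_{k=1}^r \sum_{i, j \in I_k} \overline{\lambda_i} \lambda_j \phi(g_i^{-1} g_j).\]

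Within each class $I_k$, I would pick a representative index $i_k \in I_k$. For every $i \in I_k$ there is then a unique $h_i \in \cals$ with $r(h_i) = s(g_{i_k})$ and $g_i = g_{i_k} h_i$ (with $h_{i_k}$ equal to the unit at $s(g_{i_k})$). A direct computation in the groupoid gives $g_i^{-1} g_j = h_i^{-1} h_j$ for $i, j \in I_k$. Since all $h_i$ belong to $\cals^{s(g_{i_k})}$ and $\phi$ is positive definite on $\cals$,
\[\sum_{i, j \in I_k} \overline{\lambda_i} \lambda_j \phi(g_i^{-1} g_j) = \sum_{i, j \in I_k} \overline{\lambda_i} \lambda_j \phi(h_i^{-1} h_j) \geq 0.\]
Summing over $k$ gives the desired inequality, so $\psi$ is positive definite on $\calg$.

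Measurability of $\psi$ is automatic because $\cals$ is a measurable subset of $\calg$ and $\phi$ is measurable on $\cals$. I do not expect any genuine obstacle here: the argument is simply the groupoid analogue of the classical fact that extending a positive definite function on a subgroup by zero yields a positive definite function on the ambient group. Neither the measurable structure nor the possible infiniteness of $\calg^x$ causes trouble, since the definition of positive definiteness only involves finite tuples at a fixed base point.
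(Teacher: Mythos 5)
Your argument is correct, and it is worth noting that the paper itself gives no proof of this lemma at all: it simply cites Anantharaman-Delaroche \cite[Lemma 13]{ana-h}. Your coset-decomposition argument is exactly the standard proof of that cited fact, so there is nothing to compare against within the paper. All the individual steps check out: the relation ``$g_i^{-1}g_j\in\cals$'' is an equivalence relation on the index set because the paper's convention is that a subgroupoid of $\calg$ has unit space all of $X$ (so the units $g_i^{-1}g_i$ lie in $\cals$), cross terms between distinct classes vanish since $\psi=0$ off $\cals$, and within a class the identity $g_i^{-1}g_j=h_i^{-1}h_j$ with $h_i=g_{i_k}^{-1}g_i\in\cals^{s(g_{i_k})}$ reduces the block to a positive definite expression for $\phi$. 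The only point you pass over too quickly is the choice of the conull set: positive definiteness of $\phi$ is an almost-everywhere statement in the base point, and the block for the class $I_k$ is evaluated at the base point $s(g_{i_k})$, not at $x$. So you should take $x$ outside the \emph{saturation} $\calg N$ of the null set $N$ where positive definiteness of $\phi$ fails; since $\mu$ is quasi-invariant and $\calg$ is discrete, $\calg N$ is still null, so this costs nothing, but it is the one place where the measured (rather than purely algebraic) structure enters.
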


\begin{defn}\label{defn-hap}
We say that a discrete measured groupoid $\calg$ on $(X, \mu)$ has the {\it Haagerup property (HAP)} if there exists a sequence $(\phi_n)_n$ of normalized positive definite $c_0$-functions on $\calg$ such that for a.e.\ $g\in \calg$, we have $\phi_n(g)\to 1$ as $n\to \infty$.
\end{defn}

\begin{rem}
The HAP in Definition \ref{defn-hap} is apparently quite similar, but different from the HAPs of Jolissaint \cite{jol-h} and Anantharaman-Delaroche \cite{ana-h} (see Appendix \ref{sec-app}).
The difference is the $c_0$-property of positive definite functions. 
Our definition motivates from Proposition \ref{prop-hap-ext}, whose proof is based on Lemmas \ref{lem-c0-pre-check} and \ref{lem-c0-check}.
These two lemmas provide a sufficient condition for a representation to be $c_0$.
It does not seem obvious whether the same criterion is available for the $c_0$-property in the definitions of their HAPs (see Definitions \ref{defn-hap-jol} and \ref{defn-hap-ad}).
Nevertheless, in Corollary \ref{cor-hap}, we show that all of Jolissaint's HAP, Anantharaman-Delaroche's HAP and our HAP are equivalent.
\end{rem}

\begin{rem}\label{rem-pdf}
A measurable function $\phi \colon \calg \to \C$ is $c_0$ if and only if for any $\delta >0$, there exists a measurable subset $Y$ of $X$ such that $\mu(X\setminus Y)<\delta$ and for any $\varepsilon >0$, we have
\[\tilde{\mu}(\{ \, g\in \calg|_Y \mid |\phi(g)|>\varepsilon \, \})<\infty,\]
that is, the restriction of $\phi$ to $\calg|_Y$ satisfies the $c_0$-property in Definitions \ref{defn-hap-jol} and \ref{defn-hap-ad} of the HAPs of Jolissaint and Anantharaman-Delaroche.
\end{rem}

To a normalized positive definite function $\phi$ on $\calg$, the GNS representation $\pi^\phi$ of $\calg$ is associated as follows:
For $x\in X$, we have inner product $\langle \cdot, \cdot \rangle$ on the vector space whose basis is $\calg^x$, defined by $\langle g, h\rangle =\phi(h^{-1}g)$ for $g, h\in \calg^x$.
Its completion is denoted by $\sh^\phi_x$.
The family $\sh^\phi =\{ \sh^\phi_x\}_{x\in X}$ admits a structure of a Hilbert bundle over $X$.
In fact, if $(\sigma_n)_n$ is a sequence of measurable sections of the range map of $\calg$ with $\calg =\bigcup_n\sigma_n(X)$, then it is a fundamental sequence of $\sh^\phi$.
We have the homomorphism $\pi^\phi \colon \calg \to {\rm Iso}(\sh^\phi)$ defined by $\pi^\phi(g)h=gh$ for $g\in \calg$ and $h\in \calg^{s(g)}$.
The representation $\pi^\phi$ is called the {\it GNS representation} of $\calg$ associated to $\phi$.
Let $v^\phi$ be the section of $\sh^\phi$ defined so that for $x\in X$, $v^\phi_x$ is the unit of $\calg$ at $x$.
For any $g\in \calg$, we then have $\phi(g)=\langle \pi^\phi(g)v^\phi_{s(g)}, v^\phi_{r(g)}\rangle$.
By Lemma \ref{lem-c0-check}, if $\phi$ is $c_0$, then $\pi^\phi$ is $c_0$.

The following lemma is deduced from the relationship, observed above, between positive definite functions on $\calg$ and matrix coefficients of a representation of $\calg$.

\begin{lem}\label{lem-hap-rep}
Let $\calg$ be a discrete measured groupoid on $(X, \mu)$.
Suppose that $\calg$ has the HAP, that is, there exists a sequence $(\phi_n)_{n\in \N}$ of normalized positive definite $c_0$-functions on $\calg$ such that for a.e.\ $g\in \calg$, we have $\phi_n(g)\to 1$ as $n\to \infty$.
Let $\pi$ denote the direct sum of the GNS representations $\pi^{\phi_n}$ with $n\in \N$.
Then $\pi$ is $c_0$ and $1\prec \pi$.

Conversely, if there exists a $c_0$-representation $\tau$ of $\calg$ on a Hilbert bundle over $X$ with $1\prec \tau$, then $\calg$ has the HAP.
\end{lem}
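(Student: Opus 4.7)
The plan is to pass between positive definite $c_0$-functions and matrix coefficients of $c_0$-representations via the GNS construction described just before the lemma; this exchange is standard on both sides, so the substantive task is to align the $c_0$-hypotheses carefully.

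For the forward direction, I would form the direct sum $\pi=\bigoplus_{n}\pi^{\phi_n}$ on $\sh=\bigoplus_{n}\sh^{\phi_n}$ and, in each summand, use the canonical normalized GNS section $v^{\phi_n}$. Let $\xi^n$ denote the section of $\sh$ equal to $v^{\phi_n}$ in the $n$-th coordinate and $0$ elsewhere; it is normalized, and by orthogonality of the summands one has
\[\langle \pi(g)\xi^n_{s(g)},\xi^m_{r(g)}\rangle=\delta_{nm}\,\phi_n(g)\quad \text{for a.e.\ }g\in \calg.\]
Taking $n=m$ and using $\phi_n(g)\to 1$ a.e., the identity $\|\pi(g)\xi^n_{s(g)}-\xi^n_{r(g)}\|^2=2-2\operatorname{Re}\phi_n(g)$ shows that $(\xi^n)$ is almost invariant, giving $1\prec \pi$. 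To show $\pi$ is $c_0$, I would apply Lemma \ref{lem-c0-check} to the family $(\xi^n)_{n\in\N}$. Condition (I) holds because $\{\pi^{\phi_n}(g)v^{\phi_n}_{s(g)}:g\in \calg^x\}$ is the defining generating set $\calg^x$ of $\sh^{\phi_n}_x$, so that $\{\pi(g)\xi^n_{s(g)}:g\in \calg^x,\,n\in \N\}$ is total in $\sh_x$; Condition (II) follows directly from the display above, since the off-diagonal matrix coefficients vanish and each $\phi_n$ is $c_0$ by hypothesis.

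For the converse, given a $c_0$-representation $\tau$ on some Hilbert bundle with a sequence of normalized sections $(v^n)_{n\in\N}$ almost invariant under $\tau(\calg)$, I would set $\phi_n(g)=\langle \tau(g)v^n_{s(g)},v^n_{r(g)}\rangle$. Each $\phi_n$ is normalized positive definite; the $c_0$-property transfers from $\tau$ by applying the definition of $c_0$-representation to the pair $(v^n,v^n)$; and $\phi_n(g)\to 1$ a.e.\ follows from Cauchy--Schwarz via $|\phi_n(g)-1|\leq \|\tau(g)v^n_{s(g)}-v^n_{r(g)}\|$. Then $(\phi_n)$ witnesses the HAP.

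The one delicate step is verifying that the direct sum $\pi$ is $c_0$; a priori a countable direct sum of $c_0$-representations need not be $c_0$, and this is where the specific structure of our situation matters. The fortunate circumstance is that the concrete sections $\xi^n$ chosen to invoke Lemma \ref{lem-c0-check} have vanishing off-diagonal matrix coefficients, so Condition (II) of that lemma collapses onto the $c_0$-property of the scalar functions $\phi_n$ that we already assumed, and no extra work is required.
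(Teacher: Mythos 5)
Your proposal is correct and follows essentially the route the paper intends: the paper leaves the lemma as a consequence of the GNS correspondence sketched just before it, noting that $\phi(g)=\langle \pi^\phi(g)v^\phi_{s(g)},v^\phi_{r(g)}\rangle$ and that Lemma \ref{lem-c0-check} makes $\pi^\phi$ a $c_0$-representation when $\phi$ is $c_0$, and your verification of conditions (I) and (II) for the sections $\xi^n$ (with vanishing off-diagonal coefficients handling the direct sum) together with the standard estimates $\Vert\pi(g)\xi^n_{s(g)}-\xi^n_{r(g)}\Vert^2=2-2\operatorname{Re}\phi_n(g)$ and $|\phi_n(g)-1|\leq\Vert\tau(g)v^n_{s(g)}-v^n_{r(g)}\Vert$ is exactly the intended filling-in of both directions.
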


\begin{prop}\label{prop-hap-ext}
Let $\calg$ be a discrete measured groupoid on $(X, \mu)$ and $\cals$ a subgroupoid of $\calg$.
Suppose that there exist an amenable discrete group $A$ and a homomorphism $\alpha \colon \calg \to A$ such that $\ker \alpha =\cals$ and either $A$ is abelian or $\alpha$ is class-surjective.
If $\cals$ has the HAP, then so does $\calg$.
\end{prop}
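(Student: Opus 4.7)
My plan is to reduce to the cyclic-quotient case of Proposition \ref{prop-1}, using Lemma \ref{lem-hap-rep} to pass between positive definite functions and representations. I would first handle the class-surjective case directly: Lemma \ref{lem-hap-rep} supplies a $c_0$-representation $\pi$ of $\cals$ with $1\prec \pi$; inducing $\pi$ from $\cals$ to $\calg$, Proposition \ref{prop-c0} preserves the $c_0$-property and Proposition \ref{prop-1} preserves $1\prec $, so the converse direction of Lemma \ref{lem-hap-rep} yields HAP of $\calg$.

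The substantive case is when $A$ is abelian (hence countable). I would choose a filtration $\{e\}=A_0\le A_1\le A_2\le \cdots $ with $A=\bigcup_n A_n$ and each quotient $A_{n+1}/A_n$ cyclic; such a filtration exists because a countable abelian group is an ascending union of finitely generated abelian subgroups, and each of these splits as a finite direct sum of cyclic groups. Setting $\cals_n=\alpha^{-1}(A_n)$ gives $\cals_0=\cals$, an increasing chain of subgroupoids, and $\bigcup_n \cals_n=\calg$. Composing $\alpha|_{\cals_{n+1}}$ with $A_{n+1}\to A_{n+1}/A_n$ produces a homomorphism $\cals_{n+1}\to A_{n+1}/A_n$ with kernel $\cals_n$ and cyclic target, so induction on $n$ (using the argument of the previous paragraph in its cyclic incarnation) yields HAP of each $\cals_n$.

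The final task is to deduce HAP of $\calg$ from HAP of the individual $\cals_n$. For each $n$, fix a sequence $(\phi_n^{(k)})_{k\in \N}$ of normalized positive definite $c_0$-functions on $\cals_n$ with $\phi_n^{(k)}\to 1$ a.e.\ on $\cals_n$, and extend each by $0$ off $\cals_n$; Lemma \ref{lem-rest-pdf}, together with $e_x\in \cals_n$, shows the extensions remain normalized positive definite $c_0$-functions on $\calg$. Taking an exhaustion $\calg=\bigcup_m E_m$ with $\tilde\mu(E_m)<\infty$, I would pick $k_n$ using convergence in measure on the finite-measure set $E_n\cap \cals_n$ so that
\[
\tilde\mu\bigl(\{\, g\in E_n\cap \cals_n\mid |\phi_n^{(k_n)}(g)-1|>1/n\, \}\bigr)<1/n,
\]
and then for $m$ fixed and $n\ge m$ one estimates
\[
\tilde\mu\bigl(\{\, g\in E_m\mid |\phi_n^{(k_n)}(g)-1|>1/n\, \}\bigr)\le 1/n+\tilde\mu(E_m\setminus \cals_n),
\]
which tends to $0$ as $n\to\infty$ because $\cals_n\nearrow \calg$ and $\tilde\mu(E_m)<\infty$. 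A standard diagonal subsequence extraction then produces the sequence of normalized positive definite $c_0$-functions on $\calg$ converging to $1$ a.e.

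The main obstacle is this final passage to the union: one must control the approximation error inside $\cals_n$ and the escape of mass in $E_m\setminus \cals_n$ simultaneously, and must verify that extending positive definite $c_0$-functions by zero preserves the $c_0$ property (which is immediate from the support condition, but is precisely where our formulation of the HAP, rather than the plain version without $c_0$, matters). The remaining steps are a routine assembly of Propositions \ref{prop-c0}, \ref{prop-1} and Lemmas \ref{lem-hap-rep}, \ref{lem-rest-pdf}.
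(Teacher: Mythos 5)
Your proposal is correct and follows essentially the same route as the paper: the class-surjective (and cyclic) case via Lemma \ref{lem-hap-rep} together with Propositions \ref{prop-c0} and \ref{prop-1}, and the abelian case by exhausting $A$ through finitely generated subgroups with cyclic successive quotients and then passing to the union via extension by zero (Lemma \ref{lem-rest-pdf}). Your detailed diagonal/convergence-in-measure treatment of the final union step is exactly what the paper's terse appeal to Lemma \ref{lem-rest-pdf} leaves implicit.
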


\begin{proof}
Since $\cals$ has the HAP, by Lemma \ref{lem-hap-rep}, there exists a $c_0$-representation $\pi$ of $\cals$ on a Hilbert bundle over $X$ with $1\prec \pi$.
Let $\tilde{\pi}$ denote the representation of $\calg$ induced from $\pi$.
If either $A$ is cyclic or $\alpha$ is class-surjective, then by Propositions \ref{prop-c0} and \ref{prop-1}, $\tilde{\pi}$ is $c_0$, and we have $1\prec \tilde{\pi}$.
It follows from Lemma \ref{lem-hap-rep} that $\calg$ has the HAP.
If $A$ is not necessarily cyclic and is abelian, then by inductive argument, for any finitely generated subgroup $B$ of $A$, the subgroupoid $\alpha^{-1}(B)$ of $\calg$ has the HAP.
By Lemma \ref{lem-rest-pdf}, this implies that $\calg$ has the HAP.
\end{proof}

The following generalizes \cite[Corollary 2.8]{jol-h}:

\begin{prop}\label{prop-hap-rest}
Let $\calg$ be a discrete measured groupoid on a standard probability space $(X, \mu)$.
Let $A$ be a measurable subset of $X$ with $\calg A=X$.
Then $\calg|_A$ has the HAP if and only if so does $\calg$.
\end{prop}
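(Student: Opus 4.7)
The plan is to transfer normalized positive definite $c_0$-functions between $\calg$ and $\calg|_A$ in both directions. The implication $\calg\Rightarrow\calg|_A$ is straightforward: given a sequence $(\phi_n)_n$ witnessing the HAP for $\calg$, set $\psi_n=\phi_n|_{\calg|_A}$. Positive definiteness and normalization restrict trivially, and for any $\delta,\varepsilon>0$, if $Y_0\subset X$ witnesses the $c_0$-property of $\phi_n$, then $Y=Y_0\cap A$ satisfies $\mu(A\setminus Y)<\delta$ and $((\calg|_A)|_Y)_x\subset(\calg|_{Y_0})_x$ for $x\in Y$, so the required finiteness is inherited. Pointwise convergence $\psi_n\to 1$ a.e. on $\calg|_A$ follows because the set of bad points in $\calg$ restricts to a null set in $\calg|_A$.

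For the converse, I reuse the groupoid homomorphism $j\colon\calg\to\calg|_A$ from the proof of Lemma \ref{lem-rest}: pick a measurable map $f\colon X\to\calg$ with $s(f(x))=x$, $r(f(x))\in A$, and $f(x)=e_x$ for $x\in A$, and set $j(g)=f(r(g))gf(s(g))^{-1}$; write $\rho=r\circ f\colon X\to A$ for brevity. Given $(\phi_n)_n$ witnessing the HAP for $\calg|_A$, consider the pullbacks $\tilde\phi_n=\phi_n\circ j$. Because $j$ is a groupoid homomorphism and any family $(g_i)\subset\calg^x$ is sent by $j$ into $(\calg|_A)^{\rho(x)}$, positive definiteness of $\tilde\phi_n$ is inherited from $\phi_n$; normalization is $\tilde\phi_n(e_x)=\phi_n(e_{\rho(x)})=1$. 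To deduce $\tilde\phi_n\to 1$ a.e. on $\calg$, I first verify that $\rho_*\mu$ is absolutely continuous with respect to $\mu|_A$. Partition $X\setminus A=\bigsqcup_{k\geq 1}X_k$ so that $\rho$ is injective on each $X_k$; this is possible by Lusin-Novikov since each fiber of $\rho$ is parametrized by a subset of $\calg^y$ for some $y\in A$, hence is countable. For $N\subset A$ with $\mu|_A(N)=0$, the set $E=f((\rho|_{X_k})^{-1}(N))\subset\calg$ satisfies $\tilde\mu^{-1}(E)=\mu(N)=0$, so quasi-invariance gives $\tilde\mu(E)=\mu((\rho|_{X_k})^{-1}(N))=0$; summing over $k$ and including the identity on $A$ yields absolute continuity. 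Consequently, the null set in $\calg|_A$ where $\phi_n\not\to 1$ pulls back under $j$ to a null set in $\calg$.

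The main obstacle is showing each $\tilde\phi_n$ is $c_0$: the map $g\mapsto j(g)$ need not be fiberwise finite-to-one, since two elements $g,g'\in\calg_x$ whose ranges lie in the same fiber of $\rho$ can collapse. I will remedy this by truncating on both ends. For $\delta',\varepsilon>0$, choose $N$ with $\mu(\bigcup_{k>N}X_k)<\delta'/2$, use the $c_0$-property of $\phi_n$ on $\calg|_A$ to pick $Y_A\subset A$ with $\mu|_A(A\setminus Y_A)$ so small that $\mu(\rho^{-1}(A\setminus Y_A))<\delta'/2$ (possible by the absolute continuity just proved), and put $Y'=(A\cup X_1\cup\cdots\cup X_N)\cap\rho^{-1}(Y_A)$. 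Then $\mu(X\setminus Y')<\delta'$, and for every $x\in Y'$ the map $g\mapsto j(g)$ from $(\calg|_{Y'})_x$ into $((\calg|_A)|_{Y_A})_{\rho(x)}$ is at most $(N+1)$-to-one: once the piece among $A,X_1,\ldots,X_N$ containing $r(g)$ is specified, injectivity of $\rho$ on that piece recovers $r(g)$ from $r(j(g))=\rho(r(g))$, and then $g=f(r(g))^{-1}j(g)f(x)$. Hence
\[
|\{g\in(\calg|_{Y'})_x\mid|\tilde\phi_n(g)|>\varepsilon\}|\leq(N+1)\cdot|\{h\in((\calg|_A)|_{Y_A})_{\rho(x)}\mid|\phi_n(h)|>\varepsilon\}|<\infty
\]
for a.e.\ $x\in Y'$, completing the argument.
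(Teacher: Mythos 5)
Your proposal is correct and follows essentially the same route as the paper: the paper also pulls back $\phi_n$ along the retraction $g\mapsto f(r(g))\,g\,f(s(g))^{-1}$ built from countably many sections landing in $A$, and proves the $c_0$-property by discarding all but finitely many of the injectivity pieces of $X\setminus A$ together with a large subset of $A$, so that the map into $\calg|_A$ becomes boundedly finite-to-one on fibers. The only difference is cosmetic: you package the sections into the single homomorphism $j$ of Lemma \ref{lem-rest} and spell out the absolute continuity $\rho_*\mu\ll\mu|_A$ that the paper leaves implicit.
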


\begin{proof}
The ``if" part follows by definition.
We prove the ``only if" part.
Suppose that $\calg|_A$ has the HAP.
There exists a sequence $(\phi_n)_{n\in \N}$ of normalized positive definite $c_0$-functions on $\calg|_A$ such that for a.e.\ $g\in \calg|_A$, we have $\phi_n(g)\to 1$ as $n\to \infty$.

Put $Y_0=A$.
We define a map $f_0\colon Y_0\to \calg$ by $f_0(x)=e_x$ for $x\in Y_0$.
By the assumption $\calg A=X$, there exists a family $(f_k)_{k\in \N}$ of countably many measurable sections $f_k\colon Y_k\to \calg$ of the source map $s$, defined on a measurable subset $Y_k$ of $X\setminus Y_0$, such that $\bigsqcup_{k\in \N}Y_k=X\setminus Y_0$ and for any $k\in \N$, the map $r\circ f_k$ is an injection from $Y_k$ into $Y_0$.
For $n\in \N$, we define a function $\psi_n\colon \calg \to \C$ so that for a.e.\ $g\in \calg$, choosing $k, l\in \{ 0\} \cup \N$ with $r(g)\in Y_k$ and $s(g)\in Y_l$, we have
\[\psi_n(g)=\phi_n(f_k(r(g))gf_l(s(g))^{-1}).\]
The function $\psi_n$ is positive definite and normalized.
For a.e.\ $g\in \calg$, we have $\psi_n(g)\to 1$ as $n\to \infty$.

We fix $n\in \N$ and keep the notation in the last paragraph.
We claim that $\psi_n$ is $c_0$.
To prove it, we pick $\delta >0$ and $\varepsilon >0$.
We have to find a measurable subset $Z$ of $X$ such that $\mu(X\setminus Z)<\delta$ and for a.e.\ $x\in Z$, we have
\[|\{ \, g\in (\calg|_Z)_x \mid |\psi_n(g)|>\varepsilon \, \}|<\infty.\]
There exists $K\in \N$ with $\mu(X\setminus (\bigcup_{k=0}^KY_k))<\delta/2$.
Since $\phi_n$ is a $c_0$-function on $\calg|_{Y_0}$, there exists a measurable subset $W$ of $Y_0$ such that for any integer $k$ with $0\leq k\leq K$, putting $Z_k=(r\circ f_k)^{-1}(W)$, we have $\mu(Y_k\setminus Z_k)<\delta/(2(K+1))$; and for a.e.\ $y\in W$, we have
\[|\{ \, h\in (\calg|_W)_y\mid |\phi_n(h)|>\varepsilon \, \}|<\infty.\]
We set $Z=\bigcup_{k=0}^KZ_k$.
We have $\mu(X\setminus Z)<\delta$.

Fix $x\in Z$.
There exists a unique integer $k$ with $0\leq k\leq K$ and $x\in Z_k$.
We have
\[\{ \, g\in (\calg|_Z)_x\mid |\psi_n(g)|>\varepsilon \, \} =\bigsqcup_{l=0}^K G_l,\]
where for an integer $l$ with $0\leq l\leq K$, we set
\[G_l=\{ \, g\in (\calg|_Z)_x\mid r(g)\in Z_l,\ |\psi_n(g)|>\varepsilon \, \}.\]
The map sending an element $g$ of $G_l$ to the element $f_l(r(g))gf_k(x)^{-1}$ is an injection from $G_l$ into the set
\[\{ \, h\in (\calg|_W)_{r\circ f_k(x)}\mid |\phi_n(h)|>\varepsilon \, \}.\]
The last set is finite for a.e.\ $x\in Z_k$.
The claim follows.

We proved that the sequence $(\psi_n)_n$ satisfies the condition in Definition \ref{defn-hap}.
It follows that $\calg$ has the HAP.
\end{proof}

We now introduce the generalized Haagerup property for a discrete measured groupoid, following \cite[Definition 4.2.1]{ccjjv}.
For an abelian discrete group $C$, we denote by $\widehat{C}$ the group of characters on $C$.
Let $1_C$ denote the trivial character on $C$.
We mean by a compact neighborhood of $1_C$ in $\widehat{C}$ the compact subset of $\widehat{C}$ containing $1_C$ in its interior.

\begin{defn}\label{defn-ghap}
Let $\calg$ be a discrete measured groupoid on $(X, \mu)$ and $(C, \iota)$ a central subgroupoid of $\calg$.
We say that the pair $(\calg, (C, \iota))$ has the {\it generalized Haagerup property (gHAP)} if there exist a sequence $(V_n)_{n\in \N}$ of compact neighborhoods of $1_C$ in $\widehat{C}$ and a family $\{ \, \phi^n_\chi \mid n\in \N,\ \chi \in V_n\, \}$ of normalized positive definite functions on $\calg$ satisfying the following conditions (1)--(4):
\begin{enumerate}
\item[(1)] For a.e.\ $g\in \calg$ and any $\varepsilon >0$, there exists $N\in \N$ such that for any $n\geq N$ and any $\chi \in V_n$, we have $|\phi^n_\chi(g)-1|<\varepsilon$.
\item[(2)] For any $n\in \N$, any $\chi \in V_n$, a.e.\ $g\in \calg$ and any $c\in C$, we have $\phi_\chi^n(gc)=\phi_\chi^n(g)\chi(c)$.
\item[(3)] For any $n\in \N$ and a.e.\ $g\in \calg$, the function $\chi \mapsto \phi_\chi^n(g)$ on $V_n$ is continuous.
\item[(4)] For any $n\in \N$ and any $\chi \in V_n$, the function on $\calg/C$ induced by $|\phi_\chi^n|$ is $c_0$.
\end{enumerate}
\end{defn}

Let us refer to condition (1) as the condition that the family $\{ \phi^n_\chi \}_{n, \chi}$ \textit{approaches} $1$ on $\calg$ as $n\to \infty$.
For any $n\in \N$, taking a countable dense subset of $V_n$ and using condition (3), we see that the function $g\mapsto \sup_{\chi \in V_n}|\phi_\chi^n(g)-1|$ on $\calg$ is measurable.
Condition (1) is then equivalent to that this function converges to 0 pointwise almost everywhere as $n\to \infty$.

If $X$ consists of a single point and $\calg$ is a discrete group, then the gHAP in Definition \ref{defn-ghap} is equivalent to that in \cite[Definition 4.2.1]{ccjjv}.

The following is similar to Proposition \ref{prop-hap-ext}.

\begin{prop}\label{prop-ghap-ext}
Let $\calg$ be a discrete measured groupoid on $(X, \mu)$ and $\cals$ a subgroupoid of $\calg$.
Let $(C, \iota)$ be a central subgroupoid of $\calg$ such that the image of $\iota$ is contained in $\cals$.
Suppose that there exist an amenable discrete group $A$ and a homomorphism $\alpha \colon \calg \to A$ such that $\ker \alpha =\cals$ and either $A$ is abelian or $\alpha$ is class-surjective.
If the pair $(\cals, (C, \iota))$ has the gHAP, then so does the pair $(\calg, (C, \iota))$.
\end{prop}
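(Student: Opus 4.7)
The plan is to parallel the proofs of Propositions \ref{prop-c0}, \ref{prop-1} and \ref{prop-hap-ext}, lifting the entire character-parametrized family $\{\phi_\chi^n\}$ through the GNS, inducing, and F\o lner-averaging constructions simultaneously. As in the HAP case, I would first reduce to $A$ either cyclic or class-surjective: when $A$ is abelian but not finitely generated, I would apply the argument of Proposition \ref{prop-hap-ext} (using Lemma \ref{lem-rest-pdf} to extend from $\alpha^{-1}(B)$ by zero) to the intermediate subgroupoids $\alpha^{-1}(B)$ for finitely generated $B\le A$, noting that $C\subset\cals\subset\alpha^{-1}(B)$, so the central-subgroupoid structure and the full character-equivariance of the family transfer verbatim; a further elementary induction handles the passage from finitely generated abelian to cyclic $A$.

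For each $n$ and $\chi\in V_n$ I would take $\pi_\chi^n$ to be the GNS representation of $\cals$ associated to $\phi_\chi^n$, with normalized cyclic section $v_\chi^n$. A direct computation of $\Vert \pi_\chi^n(c)k-\chi(c)k\Vert^2$ using condition (2) of the original family together with Remark \ref{rem-pdf-basic} (and centrality, which gives $\phi_\chi^n(ck)=\chi(c)\phi_\chi^n(k)$) shows $\pi_\chi^n(c)=\chi(c)\cdot\mathrm{Id}$. I would then induce to a representation $\tilde{\pi}_\chi^n$ of $\calg$ on $\tilde{\sh}_\chi^n$; since $c\in C$ is central in $\calg$, the formula $(\tilde{\pi}_\chi^n(c)\xi)(g)=\xi(c^{-1}g)=\xi(gc^{-1})=\pi_\chi^n(c)\xi(g)$ yields $\tilde{\pi}_\chi^n(c)=\chi(c)\cdot\mathrm{Id}$ on the induced bundle.

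Next, following the F\o lner averaging of Proposition \ref{prop-1}, I would fix a F\o lner sequence $(F_m)_m$ for $A$ (via Lemma \ref{lem-folner} in the cyclic case) and a measurable fundamental domain $D\subset\calg$ for right multiplication of $\cals$, and define the normalized section $\eta_\chi^{n,m}$ of $\tilde{\sh}_\chi^n$ by the same formula as there, but with $v_\chi^n$ in place of $v^m$. Setting $\Phi_\chi^{n,m}(g):=\langle\tilde{\pi}_\chi^n(g)\eta_\chi^{n,m},\eta_\chi^{n,m}\rangle$, the four conditions for $(\calg,(C,\iota))$ would be checked as follows. Condition (2) is immediate from $\tilde{\pi}_\chi^n(c)=\chi(c)\cdot\mathrm{Id}$. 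For (3) and (4) one uses the explicit formula
\[
\Phi_\chi^{n,m}(g)=\frac{1}{|A_1|^{1/2}|A_2|^{1/2}}\sum_{g'\in D_0}\phi_\chi^n(h(g^{-1},g')),
\]
in which $A_1,A_2,D_0$ and the elements $h(g^{-1},g')\in\cals$ depend only on $g$, $\alpha$, $F_m$, $D$ (and not on $\chi$ or $n$); (3) then follows from condition (3) of the original family plus a Fubini-type null-set argument, while (4) follows by propagating the ``$c_0$ mod $C$'' property of $\phi_\chi^n$ through the inducing step (as in Proposition \ref{prop-c0}) and the finite F\o lner sum. Finally, for (1), Proposition \ref{prop-1}'s estimate
\[
|\Phi_\chi^{n,m}(g)-1|\le\tfrac{1}{4}\Bigl(\tfrac{|A_1\bigtriangleup A_2|}{|A_1|}+\tfrac{|A_1\bigtriangleup A_2|}{|A_2|}\Bigr)+\sum_{g'\in D_0}|\phi_\chi^n(h(g^{-1},g'))-1|
\]
holds, and a diagonal choice of $(n_k,m_k)$ with shrunken neighborhoods $W_k\subset V_{n_k}$ yields the required approach to $1$.

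The hard part will be condition (4): showing that the induced and averaged matrix coefficients descend to genuine $c_0$-functions on $\calg/C$, rather than merely on some coarser quotient. This needs the ``mod $C$'' refinement of the $c_0$-property to be carried through both the inducing step (a $C$-relative version of Proposition \ref{prop-c0}, in which $D$ must be chosen compatibly with $C$ so that $D/C$ is a fundamental domain for $\cals/C$ in $\calg/C$) and the F\o lner averaging. A secondary issue is uniformity across $\chi\in V_n$ in the final approximation: the F\o lner factors are $\chi$-independent, and condition (1) of the original family combined with the continuity in (3) and compactness of $V_n$ yields uniform convergence of $\phi_\chi^n$ to $1$, but to extract the desired family one must replace $V_n$ by a compact neighborhood $W_k\subset V_{n_k}$ chosen along the diagonal subsequence.
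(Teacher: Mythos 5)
Your proposal is correct and follows essentially the same route as the paper: GNS representations $\pi^n_\chi$ of $\cals$, induction to $\calg$, F\o lner averaging over a fundamental domain exactly as in Proposition \ref{prop-1}, verification of conditions (2)--(4) from the explicit finite-sum formula for the matrix coefficients, reduction to cyclic quotients with a diagonal argument over finitely generated subgroups, and the same acknowledgement that condition (4) requires a ``mod $C$'' variant of the $c_0$-propagation in Proposition \ref{prop-c0} and Lemma \ref{lem-c0-check}. The only cosmetic difference is that you derive condition (2) from $\tilde{\pi}^n_\chi(c)=\chi(c)\cdot\mathrm{Id}$, whereas the paper reads it off the identity $h((gc)^{-1},g')=h(g^{-1},g')c$ in the coefficient formula; both are valid.
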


\begin{proof}
By assumption, there exist a sequence $(V_n)_{n\in \N}$ and a family $\{ \, \phi^n_\chi \mid n\in \N,\ \chi \in V_n\, \}$ satisfying conditions (1)--(4) in Definition \ref{defn-ghap} for $\cals$ in place of $\calg$.
We first assume that $A$ is abelian.
We show the following assertion:
For any cyclic subgroup $B$ of $A$, there exist a subsequence $(V_{n_l})_l$ of $(V_n)_n$ and a family $\{ \, \psi^l_\chi \mid l\in \N,\ \chi \in V_{n_l}\, \}$ of normalized positive definite functions on $\calg$ such that the family $\{ \psi^l_\chi \}_{l, \chi}$ approaches $1$ on $\alpha^{-1}(B)$ as $l\to \infty$ and satisfies conditions (2)--(4) for $\calg$.
If the assertion is shown, then by inductive argument, the same assertion holds true for any finitely generated subgroup of $A$ in place of $B$.
This implies the proposition by the diagonal argument.

Let $B$ be a cyclic subgroup of $A$.
We put $\cal{E}=\alpha^{-1}(B)$.
To lighten symbols, for $m\in \N$, we set $Z_m=\{ m\} \times V_m$ and set $Z=\bigcup_{m\in \N}Z_m$.
For each $z=(m, \chi)\in Z$, let $\pi^z$ denote the GNS representation of $\cals$ associated to $\phi_\chi^m$ and $\sh^z$ denote the Hilbert bundle over $X$ on which $\pi^z(\cals)$ acts.
We have the normalized section $v^z$ of $\sh^z$ with $\phi^m_\chi(g)=\langle \pi^z(g)v^z_{s(g)}, v^z_{r(g)}\rangle$ for a.e.\ $g\in \cals$.
Let $\tilde{\pi}^z$ denote the representation of $\calg$ induced from $\pi^z$ and $\tilde{\sh}^z$ denote the Hilbert bundle over $X$ on which $\tilde{\pi}^z(\calg)$ acts.

Recall the construction of an almost invariant sequence in the proof of Proposition \ref{prop-1}.
Let $D\subset \calg$ be a measurable fundamental domain for right multiplication of $\cals$ on $\calg$.
By Lemma \ref{lem-folner}, there exists a F\o lner sequence $(F_n)_{n\in \N}$ of $B$ such that for any $n\in \N$, the set $F_n$ contains the neutral element of $B$; and for any non-empty subset $S$ of $B$ and any $g\in B$, we have
\[\frac{\, |(gF_n\bigtriangleup F_n)\cap S|\, }{|F_n\cap S|}\to 0\quad \textrm{and}\quad \frac{\, |(gF_n\bigtriangleup F_n)\cap S|\, }{|gF_n\cap S|}\to 0\quad \textrm{as}\ n\to \infty.\]
For $n\in \N$ and $z\in Z$, we define a map $\xi^{n, z}\colon D\to \sh^z$ so that for $g\in D$, the element $\xi^{n, z}(g)$ of $\sh_{s(g)}^z$ is given by
\[\xi^{n, z}(g)=\begin{cases}
v^z_{s(g)}/|\alpha(\calg^{r(g)})\cap F_n|^{1/2} & \textrm{if}\ \alpha(g)\in F_n\\
0 & \textrm{otherwise}.
\end{cases}\]
We extend this map to the map $\xi^{n, z}\colon \calg \to \sh^z$ so that for any $g\in D$ and any $h\in \cals^{s(g)}$, the equation $\xi^{n, z}(gh)=\pi^z(h^{-1})\xi^{n, z}(g)$ holds.
The map $\xi^{n, z}$ is then regarded as a normalized section of $\tilde{\sh}^z$.

Pick an increasing sequence $(E_l)_{l\in \N}$ of measurable subsets of $\cal{E}$ such that $\tilde{\mu}(E_l)<\infty$ for any $l\in \N$ and $\cal{E}=\bigcup_{l\in \N}E_l$.

Fix $l\in \N$.
By the Lebesgue convergence theorem, there exists $n\in \N$ with
\begin{align}\label{folner2nd}
\int_{E_l}\frac{\, |(\alpha(g)F_n\bigtriangleup F_n)\cap \alpha(\calg^{r(g)})|\, }{|F_n\cap \alpha(\calg^{r(g)})|}\, d\tilde{\mu}(g) & \leq \frac{1}{\, l^2\, }\quad \textrm{and}\\
\int_{E_l}\frac{\, |(\alpha(g)F_n\bigtriangleup F_n)\cap \alpha(\calg^{r(g)})|\, }{|\alpha(g)F_n\cap \alpha(\calg^{r(g)})|}\, d\tilde{\mu}(g) & \leq \frac{1}{\, l^2\, }.\label{folnerg2nd}
\end{align}
For any $(g, g')\in \calg^{(2)}$, there exists a unique $h\in \cals^{s(g')}$ with $gg'h\in D$.
This element $h$ is denoted by $h(g, g')$.
For any $m\in \N$, the function $g\mapsto \sup_{\chi \in V_m}|\phi_\chi^m(g)-1|$ on $\cals$ is measurable.
This function converges to 0 pointwise almost everywhere as $m\to \infty$ because the family $\{ \phi^m_\chi \}_{m, \chi}$ approaches $1$ on $\cals$ as $m\to \infty$.
There thus exists $m\in \N$ such that
\begin{equation*}
\int_{E_l}\sum_{g'\in D^{r(g)}\cap \, \alpha^{-1}(F_n)}\left( \sup_{\chi \in V_m}|\phi_\chi^m(h(g^{-1}, g')) -1| \right) d\tilde{\mu}(g)\leq \frac{1}{\, 2l^2\, }.
\end{equation*}
We set $U_l=V_m$ and $W_l= \{ l\} \times U_l$.
For $w=(l, \chi)\in W_l$, we set $z(w)=(m, \chi)\in Z_m$ and denote by $\eta^w$ the normalized section $\xi^{n, z(w)}$ of $\tilde{\sh}^{z(w)}$.
Following the computation in the proof of Proposition \ref{prop-1} to deduce inequality (\ref{ineq-int}), we have
\begin{equation}\label{psi-l}
\int_{E_l}\left( \sup_{w\in W_l}|\langle \tilde{\pi}^{z(w)}(g)\eta^w_{s(g)}, \eta^w_{r(g)}\rangle -1|\right) d\tilde{\mu}(g)\leq \frac{1}{\, l^2\, }.
\end{equation}

We set $W=\bigcup_{l\in \N}W_l$.
For $w=(l, \chi)\in W$, we define a function $\psi_\chi^l \colon \calg \to \C$ by 
\[\psi_\chi^l(g)=\langle \tilde{\pi}^{z(w)}(g)\eta^w_{s(g)}, \eta^w_{r(g)}\rangle \quad \textrm{for}\ g\in \calg.\]
By inequality (\ref{psi-l}), the family $\{ \psi_\chi^l\}_{l, \chi}$ approaches $1$ on $\cal{E}$ as $l\to \infty$.
We check conditions (2)--(4) in Definition \ref{defn-ghap} for this family and the sequence $(U_l)_l$.

Fix $l\in \N$ and $g\in \calg$.
Put $y=r(g)$ and $x=s(g)$.
For any $\chi \in U_l$, putting $w=(l, \chi)$, $z=z(w)=(m, \chi)$ and $h(g')=h(g^{-1}, g')$ for $g'\in D^y$, we have
\begin{align}\label{psi-phi}
\psi_\chi^l(g) & =\langle \tilde{\pi}^z(g)\eta^w_x, \eta^w_y\rangle =\sum_{g'\in D^y}\langle \eta_x^w(g^{-1}g'), \eta^w_y(g')\rangle \\
& =\sum_{g'\in D^y}\langle \pi^z(h(g'))\eta_x^w(g^{-1}g'h(g')), \eta^w_y(g')\rangle  \nonumber \\
& =|\alpha(\calg^x)\cap F_n|^{-1/2}|\alpha(\calg^y)\cap F_n|^{-1/2}\sum_{g'\in D_0}\phi^m_\chi(h(g'))\nonumber
\end{align}
where $n$ is the number chosen in inequalities (\ref{folner2nd}) and (\ref{folnerg2nd}) for $l$ and
\[D_0=\{ \, g'\in D^y\mid \alpha(g')\in \alpha(g)F_n\cap F_n\, \}.\]
For any $c\in C$ and any $g'\in D^y$, we have $h((gc)^{-1}, g')=h(g^{-1}, g')c$ by the definition of the symbol $h(\cdot, \cdot)$.
Equation (\ref{psi-phi}) implies that for a.e.\ $g\in \calg$, we have $\psi_\chi^l(gc)=\chi(c)\psi_\chi^l(g)$.
Equation (\ref{psi-phi}) also implies that for a.e.\ $g\in \calg$, the function $\chi \mapsto \psi_\chi^l(g)$ on $U_l$ is continuous.

We have checked conditions (2) and (3) for the families $(U_l)_l$ and $\{ \psi_\chi^l\}_{l, \chi}$.
Condition (4) for them is checked by following the proof of Proposition \ref{prop-c0}.
Since $\phi_\chi^n$ is not necessarily $c_0$ as a function on $\cals$, we have to modify the proof and have to show a lemma corresponding to Lemma \ref{lem-c0-check}.
This process is established almost verbatim.
We thus omit it.

As in the proof of Proposition \ref{prop-1}, the proof of the case where $\alpha$ is class-surjective is obtained similarly.
\end{proof}

Along the proof of Proposition \ref{prop-hap-rest}, we can show the following:

\begin{prop}\label{prop-ghap-rest}
Let $\calg$ be a discrete measured groupoid on a standard probability space $(X, \mu)$, and $(C, \iota)$ a central subgroupoid of $\calg$.
Let $A$ be a measurable subset of $X$ with $\calg A=X$.
We use the same symbol $\iota$ to denote the restriction of $\iota$ to $(C\ltimes (X, \mu))|_A$
Then the pair $(\calg|_A, (C, \iota))$ has the gHAP if and only if so does the pair $(\calg, (C, \iota))$.
\end{prop}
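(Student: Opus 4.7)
The plan is to adapt the proof of Proposition \ref{prop-hap-rest} by tracking the $C$-equivariance required by Definition \ref{defn-ghap}. The ``if'' direction is immediate: if $\{\phi^n_\chi\}$ witnesses the gHAP of $(\calg, (C,\iota))$, then its restriction to $\calg|_A$ satisfies conditions (1)--(4) for $(\calg|_A, (C,\iota))$, since $(\calg|_A)/C$ is canonically identified with $(\calg/C)|_A$.

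For the ``only if'' direction, I would adopt the setup in the proof of Proposition \ref{prop-hap-rest}: fix a countable family $(f_k)_{k\geq 0}$ of measurable sections $f_k\colon Y_k \to \calg$ of $s$, with $Y_0 = A$, $f_0(x) = e_x$, $\bigsqcup_k Y_k = X$, and $r\circ f_k$ injective into $A$. Given $\{\phi^n_\chi\}$ witnessing the gHAP of $(\calg|_A, (C,\iota))$, define
\[\psi^n_\chi(g) = \phi^n_\chi\bigl(f_k(r(g))\, g\, f_l(s(g))^{-1}\bigr), \qquad r(g)\in Y_k,\ s(g)\in Y_l.\]
Positive definiteness and normalization of $\psi^n_\chi$, as well as conditions (1) and (3) of Definition \ref{defn-ghap}, are verified as in the proof of Proposition \ref{prop-hap-rest}: for each $g\in \calg$ the reduction $f_k(r(g)) g f_l(s(g))^{-1}$ is a fixed element of $\calg|_A$, so $\sup_{\chi\in V_n}|\psi^n_\chi(g)-1|\to 0$ by condition (1) for $\phi^n_\chi$, and positive definiteness follows by rewriting $\psi^n_\chi(g_i^{-1}g_j) = \phi^n_\chi(h_i^{-1}h_j)$ where $h_i = f_{k_0}(x)\, g_i\, f_{l_i}(s(g_i))^{-1}$ has common range $r(f_{k_0}(x))\in A$, with $x$ the common range of the $g_i$ and $k_0$ chosen so that $x\in Y_{k_0}$.

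The use of centrality is confined to condition (2). For $c\in C$ at $s(g)$, the defining identity $g'\iota(c,s(g'))=\iota(c,r(g'))g'$ applied to $g' = f_l(s(g))^{-1}$ yields $c\, f_l(s(g))^{-1} = f_l(s(g))^{-1}\,\iota\bigl(c, r(f_l(s(g)))\bigr)$, so
\[f_k(r(g))\, gc\, f_l(s(g))^{-1} \;=\; \bigl(f_k(r(g))\, g\, f_l(s(g))^{-1}\bigr)\cdot \iota\bigl(c, r(f_l(s(g)))\bigr),\]
with the second factor an element of $C$ at a point of $A$. Condition (2) for $\phi^n_\chi$ then gives $\psi^n_\chi(gc)=\psi^n_\chi(g)\chi(c)$. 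Condition (4), the $c_0$-property of $|\psi^n_\chi|$ on $\calg/C$, follows by repeating the finite-cover-and-count argument of Proposition \ref{prop-hap-rest} one level lower, on the quotient groupoids $(\calg/C)|_Z$ and $(\calg/C)|_A\cong (\calg|_A)/C$. The main potential obstacle I foresee lies precisely here: one has to verify that the injection on source fibers used in Proposition \ref{prop-hap-rest} remains an injection after quotienting by $C$. This reduces to the equivariance just established, together with the fact that $C$ preserves source and range, so the entire reduction $g\mapsto f_k(r(g)) g f_l(s(g))^{-1}$ descends cleanly to a Borel map between the quotient groupoids and the proof of the HAP case transfers almost verbatim.
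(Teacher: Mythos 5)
Your proposal is correct and follows exactly the route the paper intends: the paper gives no separate argument for Proposition \ref{prop-ghap-rest} beyond the remark that it goes ``along the proof of Proposition \ref{prop-hap-rest}'', and your reduction $g\mapsto f_k(r(g))\,g\,f_l(s(g))^{-1}$ is the same one used there. You correctly identify the only genuinely new point, namely condition (2) of Definition \ref{defn-ghap}, and your use of the centrality identity to push $\iota(c,s(g))$ past $f_l(s(g))^{-1}$ to a $C$-element based at a point of $A$ is exactly what is needed; the descent of the counting argument to $\calg/C$ for condition (4) is likewise handled correctly.
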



\subsection{Comparison with the gHAP of groups}

When a discrete measured groupoid $\calg$ is associated with a p.m.p.\ action $G\c (X, \mu)$, it is natural to ask the relationship between the HAPs of $\calg$ and $G$.
Jolissaint \cite[Remark in p.172]{jol-h} shows that under the assumption that the action is free, $\calg$ has his HAP if and only if $G$ has the HAP, through \cite[Proposition 3.1]{popa}.
We establish this equivalence for our HAP and gHAP in the following:

\begin{prop}\label{prop-group-oid}
Let $G$ be a discrete group having a central subgroup $C$.
Suppose that $G/C$ has a p.m.p.\ action on $(X, \mu)$, and let $G$ act on $(X, \mu)$ through the quotient map from $G$ onto $G/C$.
We set $\calg =G\ltimes (X, \mu)$ and $\cal{C}=C\ltimes (X, \mu)$, and define $\iota$ as the identity on $\cal{C}$.
Then the pair $(\calg, (C, \iota))$ has the gHAP if and only if the pair $(G, C)$ has the gHAP.
\end{prop}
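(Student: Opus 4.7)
The plan is to verify the two implications by pulling back and averaging. For the ``if'' direction, I would define $\Phi^n_\chi \colon \calg \to \C$ by $\Phi^n_\chi(\gamma, x) = \phi^n_\chi(\gamma)$, i.e., pull back through the groupoid homomorphism $\calg \to G$, $(\gamma, x) \mapsto \gamma$. Positive definiteness and normalization are inherited immediately. Conditions (1) and (3) of Definition \ref{defn-ghap} are inherited pointwise from the group statements; condition (2) uses that $C$ acts trivially on $X$, so $(\gamma, x) \cdot \iota(c, x) = (\gamma c, x)$; and condition (4) is immediate because, for any $\varepsilon > 0$, the set $F_\varepsilon = \{\bar\gamma \in G/C : |\phi^n_\chi(\bar\gamma)| > \varepsilon\}$ is finite, so for every $x \in X$ the set $\{\bar g \in (\calg/C)_x : |\overline{\Phi^n_\chi}(\bar g)| > \varepsilon\}$ has cardinality at most $|F_\varepsilon|$, and we may take $Y = X$.

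For the ``only if'' direction, I would average over $X$: set $\phi^n_\chi(\gamma) = \int_X \Phi^n_\chi(\gamma, x)\, d\mu(x)$. Normalization follows from $\Phi^n_\chi(e, x) = 1$. Positive definiteness uses a change of variables together with the p.m.p.\ hypothesis: for $\gamma_1, \ldots, \gamma_k \in G$ and $\lambda_i \in \C$, substituting $x = \gamma_j^{-1} x_0$ in the $j$-th term gives
\[ \sum_{i,j} \overline{\lambda_i}\lambda_j \phi^n_\chi(\gamma_i^{-1}\gamma_j) = \int_X \sum_{i,j} \overline{\lambda_i}\lambda_j \Phi^n_\chi(\gamma_i^{-1}\gamma_j, \gamma_j^{-1} x_0)\, d\mu(x_0), \]
and the integrand is non-negative for a.e.\ $x_0$ by applying positive definiteness of $\Phi^n_\chi$ to the elements $g_i = (\gamma_i, \gamma_i^{-1} x_0) \in \calg^{x_0}$, whose products satisfy $g_i^{-1} g_j = (\gamma_i^{-1}\gamma_j, \gamma_j^{-1} x_0)$. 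Condition (1) follows from dominated convergence applied to $\sup_{\chi \in V_n} |\Phi^n_\chi(\gamma, \cdot) - 1|$ (which is dominated by $2$ by Remark \ref{rem-pdf-basic} and measurable by condition (3) via a countable dense subset of $V_n$); condition (2) follows from $\phi^n_\chi(\gamma c) = \int \Phi^n_\chi((\gamma, x) \cdot \iota(c, x))\, d\mu(x) = \chi(c)\phi^n_\chi(\gamma)$; and condition (3) follows from dominated convergence applied to continuity in $\chi$.

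The main obstacle is condition (4): deducing that $|\phi^n_\chi|$ vanishes at infinity on $G/C$ from the fact that $|\overline{\Phi^n_\chi}|$ is $c_0$ on $\calg/C = (G/C) \ltimes X$ in the groupoid sense. I would argue by contradiction. Since $|\phi^n_\chi(\bar\gamma)| \leq \mu(A_{\bar\gamma, \varepsilon}) + \varepsilon$, where $A_{\bar\gamma, \varepsilon} = \{x \in X : |\overline{\Phi^n_\chi}(\bar\gamma, x)| > \varepsilon\}$, it suffices to show $\mu(A_{\bar\gamma, \varepsilon}) \to 0$ as $\bar\gamma \to \infty$ in $G/C$. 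If not, there are $\delta_0 > 0$ and an infinite set $S \subset G/C$ with $\mu(A_{\bar\gamma, \varepsilon}) \geq \delta_0$ for every $\bar\gamma \in S$. Choose $Y \subset X$ with $\mu(X \setminus Y) < \delta_0/4$ furnished by the groupoid $c_0$ property for $|\overline{\Phi^n_\chi}|$; then by p.m.p.\ invariance $B_{\bar\gamma} := A_{\bar\gamma, \varepsilon} \cap Y \cap \bar\gamma^{-1} Y$ has measure at least $\delta_0/2$ for each $\bar\gamma \in S$, and
\[ \int_Y \#\{\, \bar\gamma \in S : x \in B_{\bar\gamma}\, \}\, d\mu(x) = \sum_{\bar\gamma \in S} \mu(B_{\bar\gamma}) = \infty. \]
Hence the counting function is infinite on a positive-measure subset of $Y$, and for each such $x$ the set $\{\bar g \in (\calg/C|_Y)_x : |\overline{\Phi^n_\chi}(\bar g)| > \varepsilon\}$ contains the infinite family $\{(\bar\gamma, x) : \bar\gamma \in S,\ x \in B_{\bar\gamma}\}$, contradicting the groupoid $c_0$ property. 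This completes the proof plan.
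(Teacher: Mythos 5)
Your proposal follows the paper's strategy almost exactly: the ``if'' direction by pulling back along the projection $\calg\to G$, and the ``only if'' direction by averaging over $X$, with the same change-of-variables computation $\bar{\phi}(\gamma^{-1}\delta)=\int_X\phi\bigl((\gamma,\gamma^{-1}x)^{-1}(\delta,\delta^{-1}x)\bigr)\,d\mu(x)$ for positive definiteness and the same dominated-convergence treatment of conditions (1)--(3). The only place you diverge is condition (4), where the paper gives a direct estimate, namely it extracts from the $c_0$-property a set $Y$ with $\mu(X\setminus Y)<\varepsilon/3$ and a finite $F\subset G/C$ outside of which $|\phi(\gamma,x)|\le\varepsilon/3$ on $\calg/C|_Y$, and then bounds $|\bar{\phi}(\gamma)|\le\int_{Y\cap\gamma^{-1}Y}|\phi(\gamma,x)|\,d\mu(x)+2\mu(X\setminus Y)<\varepsilon$; you instead argue by contradiction.

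Your contradiction argument has one invalid inference: from
\[\int_Y \#\{\,\bar\gamma\in S : x\in B_{\bar\gamma}\,\}\,d\mu(x)=\sum_{\bar\gamma\in S}\mu(B_{\bar\gamma})=\infty\]
you conclude that the counting function $N(x)=\#\{\,\bar\gamma\in S: x\in B_{\bar\gamma}\,\}$ is infinite on a positive-measure set. That does not follow: a nonnegative integer-valued measurable function can be finite almost everywhere and still have infinite integral (take $N=2^k$ on a set of measure $2^{-k}/k$), so the displayed identity is consistent with $N<\infty$ a.e.\ and yields no contradiction with the groupoid $c_0$-property, which only asserts per-$x$ finiteness. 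The conclusion you want is nevertheless true and the repair is short: enumerate $S=\{\bar\gamma_1,\bar\gamma_2,\dots\}$ and observe that $\mu\bigl(\bigcap_{k}\bigcup_{j\ge k}B_{\bar\gamma_j}\bigr)=\lim_k\mu\bigl(\bigcup_{j\ge k}B_{\bar\gamma_j}\bigr)\ge\delta_0/2>0$ by continuity from above, and every point of this $\limsup$ set lies in infinitely many $B_{\bar\gamma}$, giving infinitely many elements $(\bar\gamma,x)$ of $(\calg/C|_Y)_x$ with $|\overline{\Phi^n_\chi}|>\varepsilon$ and hence the desired contradiction. With that substitution (or by switching to the paper's direct estimate) your proof is complete.
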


Assuming that $C$ is trivial, we see that $\calg$ has the HAP if and only if $G$ has the HAP.

\begin{proof}[Proof of Proposition \ref{prop-group-oid}]
For a function $\psi \colon G\to \C$, we define a function $\tilde{\psi}\colon \calg \to \C$ by $\tilde{\psi}(\gamma, x)=\psi(\gamma)$ for $\gamma \in G$ and $x\in X$.
If $\psi$ is positive definite, then so is $\tilde{\psi}$.
The ``if" part of the proposition follows from this construction.

We prove the ``only if" part of the proposition.
For a measurable function $\phi \colon \calg \to \C$ with $|\phi(g)|\leq 1$ for a.e.\ $g\in \calg$, we define a function $\bar{\phi}\colon G\to \C$ by $\bar{\phi}(\gamma)=\int_X\phi(\gamma, x)\, d\mu(x)$.
For any $\gamma, \delta \in G$, we have
\[\bar{\phi}(\gamma^{-1}\delta)=\int_X\phi(\gamma^{-1}\delta, \delta^{-1}x)\, d\mu(x)=\int_X\phi((\gamma, \gamma^{-1}x)^{-1}(\delta, \delta^{-1}x))\, d\mu(x),\]
where the first equation holds because the action $G\c (X, \mu)$ is p.m.p.
It follows that if $\phi$ is positive definite and normalized, then so is $\bar{\phi}$.

Suppose that we have a sequence $(V_n)_{n\in \N}$ of compact neighborhoods of $1_C$ in $\widehat{C}$ and a family $\{ \, \phi^n_\chi \mid n\in \N,\ \chi \in V_n\, \}$ of normalized positive definite functions on $\calg$ satisfying conditions (1)--(4) in Definition \ref{defn-ghap}.
We check those conditions (1)--(4) for the pair $(G, C)$ and the families $(V_n)_n$ and $\{ \bar{\phi}^n_\chi \}_{n, \chi}$.
Conditions (2) and (3) hold by the definition of $\bar{\phi}^n_\chi$.

As mentioned right after Definition \ref{defn-ghap}, for any $n\in \N$, the function on $\calg$ defined by $g\mapsto \sup_{\chi \in V_n}|\phi_\chi^n(g)-1|$ is measurable.
Condition (1) for $\{ \phi_\chi^n\}_{n, \chi}$ implies that this function converges to 0 pointwise almost everywhere as $n\to \infty$.
The Lebesgue convergence theorem implies condition (1) for $\{ \bar{\phi}^n_\chi \}_{n, \chi}$.

To check condition (4), we fix $n\in \N$ and $\chi \in V_n$, and put $\phi=\phi^n_\chi$.
Pick $\varepsilon >0$.
We have to show that the set $\{ \, \gamma \in G/C\mid |\bar{\phi}(\gamma)|>\varepsilon \, \}$ is finite.
Since $|\phi|$ is $c_0$ on $\calg /C$, there exist a measurable subset $Y$ of $X$ and a finite subset $F$ of $G/C$ such that $\mu(X\setminus Y)<\varepsilon /3$; and for a.e.\ $x\in Y$ and any $\gamma \in (G/C)\setminus F$ with $\gamma x\in Y$, we have $|\phi(\gamma, x)|\leq \varepsilon /3$.
It follows that for any $\gamma \in (G/C)\setminus F$, we have
\[|\bar{\phi}(\gamma)|\leq \int_{Y\, \cap \, \gamma^{-1}Y}|\phi(\gamma, x)|\, d\mu(x)+\mu(X\setminus Y)+\mu(X\setminus \gamma^{-1}Y)<\varepsilon,\]
where in the last inequality, we use the assumption that the action $G/C\c (X, \mu)$ is p.m.p.
Condition (4) follows.
The pair $(G, C)$ thus has the gHAP.
\end{proof}


\section{The Haagerup property of groups in class $\scc$}\label{sec-c}

Recall that $\scc$ denotes the smallest subclass of the class of discrete groups that satisfies the following conditions (1)--(4):
\begin{enumerate}
\item[(1)] Any treeable group belongs to $\scc$.
\item[(2)] The direct product of two groups in $\scc$ belongs to $\scc$.
\item[(3)] For a discrete group $G$ and its finite index subgroup $H$, we have $G\in \scc$ if and only if $H\in \scc$.
\item[(4)] Any central extension of a group in $\scc$ belongs to $\scc$.
\end{enumerate}
To show that any group in $\scc$ has the HAP, we introduce subclasses of $\scc$.
We define $\sd$ as the smallest subclass of $\scc$ satisfying conditions (1), (2) and (4).
We inductively define subclasses of $\sd$, $\sd_n$ and $\sd_n'$ for $n\in \N$, as follows:
Let $\sd_1$ be the class of direct products of finitely many treeable groups, and $\sd_1'$ the class of central extensions of a group in $\sd_1$.
Let $\sd_n$ be the class of direct products of finitely many groups in $\sd_{n-1}'$.
Let $\sd_n'$ be the class of central extensions of a group in $\sd_n$.
We have the inclusion $\sd_1\subset \sd_1'\subset \sd_2\subset \sd_2'\subset \cdots$.
The union $\bigcup_n\sd_n$ satisfies conditions (1), (2) and (4), and is therefore equal to $\sd$.

\begin{lem}\label{lem-d}
Pick $n\in \N$ and a group $G$ in $\sd_n'$.
Let $H$ be a finite index subgroup of $G$.
Then there exists a finite index subgroup $H_1$ of $H$ with $H_1\in \sd_n'$.
\end{lem}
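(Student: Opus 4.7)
The plan is a double induction on $n$, proving the statement of the lemma (call it $B(n)$) in parallel with the analogous statement $A(n)$ asserting that any finite-index subgroup of a group in $\sd_n$ contains a finite-index subgroup lying in $\sd_n$. The induction proceeds through the cycle $A(1)\Rightarrow B(1)\Rightarrow A(2)\Rightarrow B(2)\Rightarrow\cdots$, reflecting the chain of inclusions $\sd_1\subset \sd_1'\subset \sd_2\subset \sd_2'\subset\cdots$.

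For the base case $A(1)$, I write $G=T_1\times\cdots\times T_k$ with each $T_i$ treeable, identify $T_i$ with its natural factor in $G$, and set $H_i=H\cap T_i$. The map $T_i\to G/H$ sending $t\mapsto tH$ has kernel $H_i$ and induces an injection of coset spaces $T_i/H_i\hookrightarrow G/H$, so $[T_i:H_i]\leq [G:H]<\infty$. Invoking the fact that the class of treeable groups is closed under finite-index subgroups, each $H_i$ is treeable, and $H_1\times\cdots\times H_k$ is then a subgroup of $H$ whose index in $G$ equals $\prod_i[T_i:H_i]<\infty$; it belongs to $\sd_1$.

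The implication $A(n)\Rightarrow B(n)$ uses the defining central extension of $G\in\sd_n'$: let $C\subset G$ be the central subgroup with $G/C\in\sd_n$, and let $\pi\colon G\to G/C$ be the quotient. The image $\pi(H)$ has finite index in $G/C$, so by $A(n)$ it contains a finite-index subgroup $L\in\sd_n$. Set $H_1:=H\cap\pi^{-1}(L)$. Then $[H:H_1]=[\pi(H):L]<\infty$, and $\pi(H_1)=L$ because $L\subset\pi(H)$; moreover $H_1\cap C$ is central in $H_1$ with quotient $H_1/(H_1\cap C)\cong L\in\sd_n$, so $H_1\in\sd_n'$. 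The implication $B(n-1)\Rightarrow A(n)$ mirrors the base case: for $G=G_1\times\cdots\times G_k$ with each $G_i\in\sd_{n-1}'$, one sets $H_i=H\cap G_i$ (finite index in $G_i$ by the same coset-space injection), applies $B(n-1)$ to obtain a finite-index subgroup $H_i'\leq H_i$ with $H_i'\in\sd_{n-1}'$, and takes the direct product.

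The main obstacle I anticipate is the base-case appeal to treeability being preserved under finite-index subgroups. This fact is not proved in the excerpt, so it will need either to be established in a preparatory lemma of this section (for instance by restricting the treeable equivalence relation associated to a free ergodic p.m.p.\ action of $T_i$ to an ergodic component of the restricted $H_i$-action, and arguing that the resulting finite-index subrelation of a treeable relation remains treeable) or to be cited from the standard literature on treeable equivalence relations.
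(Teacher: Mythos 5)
Your proof is correct and takes essentially the same route as the paper's: an induction on $n$ whose central-extension step pulls back a finite-index $\sd_n$-subgroup of $G/C$ via $H_1=H\cap\pi^{-1}(L)$, and whose direct-product step intersects $H$ with each factor and applies the inductive hypothesis --- the paper simply merges your two statements $A(n)$ and $B(n)$ into a single inductive loop rather than alternating them. The base-case fact you flag, that finite-index subgroups of treeable groups are treeable, is exactly what the paper invokes, citing Th\'eor\`eme IV.4 of \cite{gab-cost}.
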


\begin{proof}
We prove the lemma by induction on $n$.
The case of $n=1$ follows from that any finite index subgroup of a treeable group is treeable (\cite[Th\'eor\`eme IV.4]{gab-cost}).
Suppose that $G$ is in $\sd_n'$ and $H$ is a finite index subgroup of $G$.
There exists a central subgroup $C$ of $G$ with $G/C\in \sd_n$.
Set $\Gamma =G/C$.
We have $\Gamma_1,\ldots, \Gamma_m\in \sd_{n-1}'$ with $\Gamma =\Gamma_1\times \cdots \times \Gamma_m$.
Let $q\colon G\to \Gamma$ be the quotient map.
By the hypothesis of the induction, for any $i=1,\ldots, m$, there exists a finite index subgroup $\Lambda_i$ of $\Gamma_i$ such that $\Lambda_i\in \sd_{n-1}'$, and setting $\Lambda =\Lambda_1\times \cdots \times \Lambda_m$, we have $\Lambda <q(H)$.
We set $H_1=q^{-1}(\Lambda)\cap H$.
The group $H_1$ is of finite index in $H$ and is a central extension of $\Lambda$.
Since $\Lambda \in \sd_n$, we have $H_1\in \sd_n'$.
The induction is completed.
\end{proof}

Let $\se$ denote the class of discrete groups having a finite index subgroup belonging to $\sd$.
This class $\se$ satisfies conditions (1)--(4).
In fact, conditions (1), (2) and (4) are easily checked, and condition (3) follows from Lemma \ref{lem-d}.
The inclusion $\scc \subset \se$ therefore holds.
We have shown the following:

\begin{lem}\label{lem-c-d}
Any group in $\scc$ has a finite index subgroup belonging to $\sd$.
\end{lem}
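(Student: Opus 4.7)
My plan is to exploit the minimality of $\scc$: I will verify that the class $\se$, consisting of all discrete groups with a finite index subgroup in $\sd$, already satisfies all four closure conditions (1)--(4) used to define $\scc$. Once this is established, $\scc \subset \se$, which is exactly the content of the lemma.

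Three of the four closure conditions are straightforward. Any treeable group lies in $\sd_1 \subset \sd$ and so trivially in $\se$, giving condition (1). For condition (2), if $G_1, G_2 \in \se$ with finite index subgroups $H_i \in \sd$, then $H_1 \times H_2$ is finite index in $G_1 \times G_2$ and lies in $\sd$ by closure of $\sd$ under direct products. For condition (4), if $G$ is a central extension of $\Gamma \in \se$ via a quotient map $q \colon G \to \Gamma$, and $\Lambda \le \Gamma$ is a finite index subgroup in $\sd$, then $q^{-1}(\Lambda)$ is finite index in $G$ and is itself a central extension of $\Lambda$, hence in $\sd$ by closure of $\sd$ under central extensions.

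The main obstacle is condition (3), that $\se$ is closed under commensurability, and this is where Lemma \ref{lem-d} enters in an essential way. One direction is easy: if $H \le G$ has finite index and $H \in \se$, then any finite index subgroup of $H$ in $\sd$ is also finite index in $G$, so $G \in \se$. The nontrivial direction asks that, given $G \in \se$ and a finite index subgroup $H \le G$, the group $H$ also belongs to $\se$. Here I take a witness $G_1 \le G$ that is finite index and lies in $\sd$; then $G_1 \cap H$ has finite index in $G_1$, and applying Lemma \ref{lem-d} at the level $\sd_n'$ containing $G_1$ produces a finite index subgroup $H_1 \le G_1 \cap H$ lying in $\sd_n' \subset \sd$. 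Since $H_1$ is then finite index in $H$, we conclude $H \in \se$, finishing the verification of (3).

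With $\se$ verified to be closed under operations (1)--(4), the minimality of $\scc$ forces $\scc \subset \se$, which is precisely the lemma. The only real content is hidden inside Lemma \ref{lem-d}, whose tower induction on the filtration $\sd_1 \subset \sd_1' \subset \sd_2 \subset \cdots$ is what allows us to refine finite index subgroups of groups in $\sd$ back into $\sd$; without it, $\sd$ is not a priori closed under passage to finite index subgroups, and the commensurability step would fail.
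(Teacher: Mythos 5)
Your proposal is correct and follows essentially the same route as the paper: the paper also introduces the class $\se$ of groups with a finite index subgroup in $\sd$, checks that it satisfies conditions (1)--(4) (using Lemma \ref{lem-d} precisely for condition (3)), and concludes $\scc\subset \se$ by minimality. You have merely written out the details that the paper leaves as "easily checked," and your handling of the nontrivial direction of (3) via $G_1\cap H$ and Lemma \ref{lem-d} matches the intended argument.
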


By the inclusion $\sd \subset \scc$ and condition (3), we have $\se \subset \scc$ and therefore $\scc =\se$.
We do not however use this equality in the sequel.

For any group in $\scc$, we construct its certain p.m.p.\ action and show that the associated groupoid has the HAP.
We prepare the following terminology:

\begin{defn}
Let $\calg$ be a discrete measured groupoid on a standard probability space $(X, \mu)$ and $\cals$ a subgroupoid of $\calg$.
\begin{enumerate}
\item[(i)] We say that $\cals$ is {\it co-abelian} in $\calg$ if there exist an abelian discrete group $A$ and a homomorphism $\alpha \colon \calg \to A$ with $\ker \alpha =\cals$. 
\item[(ii)] We say that $\cals$ is {\it weakly co-abelian} in $\calg$ if there exists an increasing sequence of subgroupoids of $\calg$ of finite length, $\cals =\cals_0<\cals_1<\cdots <\cals_L=\calg$, such that for any $l=0, 1,\ldots, L-1$, the subgroupoid $\cals_l$ is co-abelian in $\cals_{l+1}$.
\end{enumerate}
\end{defn}

\begin{thm}\label{thm-action}
Let $G$ be a discrete group in $\scc$.
Then there exists an ergodic p.m.p.\ action $G\c (X, \mu)$ satisfying the following:
We set $\calg =G\ltimes (X, \mu)$, and define $\rho \colon \calg \to G$ as the projection.
There exist
\begin{itemize}
\item a measurable subset $Y$ of $X$ with $\calg Y=X$;
\item a weakly co-abelian subgroupoid $\calr$ of $\calg|_Y$;
\item a treeable equivalence relation $\calr_i$ on $(X_i, \mu_i)$ indexed by $i=1,\ldots, n$;
\item an isomorphism $f\colon \calr_1\times \cdots \times \calr_n\to \calr$; and
\item a homomorphism $\rho_i\colon \calr_i \to G$ indexed by $i=1,\ldots, n$
\end{itemize}
such that for any $i=1,\ldots, n$, the following diagram commutes:
\[\xymatrix{
\ar[d]_{p_i} \cali_1\times \cdots \times \cali_{i-1}\times \calr_i\times \cali_{i+1}\times \cdots \times \cali_n \ar[r]^{\hspace{7.8em}f} & \calr \ar[r]^{\rho} & G\\
\calr_i\ar[urr]_{\rho_i} & &\\  
}\]
where $\cali_j$ is the trivial subrelation of $\calr_j$ for $j=1,\ldots, n$, and $p_i$ is the projection.
\end{thm}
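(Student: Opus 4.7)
The plan is to prove Theorem \ref{thm-action} by structural induction on the construction of $G$ in $\scc$, first reducing to $\sd$ via Lemma \ref{lem-c-d} and then inducting through $\sd = \bigcup_{n\geq 1}\sd_n$ along the chain $\sd_1 \subset \sd_1' \subset \sd_2 \subset \sd_2' \subset \cdots$.

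\emph{Reduction to $\sd$.} By Lemma \ref{lem-c-d}, $G$ contains a finite-index subgroup $H \in \sd$. If $H \c (X_H,\mu_H)$ satisfies the conclusion with data $(Y_H,\calr_H,\calr_1,\ldots,\calr_n,f,\rho_i^H)$, then the coinduced action $G \c X_G := G \times_H X_H$ inherits the conclusion: one copy of $X_H$ sits in $X_G$ as a measurable subset, $\calg_G|_{X_H} = \calg_H|_{X_H}$ canonically, and $\calg_G X_H = X_G$ because $G$ permutes the cosets of $H$ transitively while $\calg_H Y_H = X_H$. Hence one reuses $Y_H$, $\calr_H$, $\calr_i$, $f$, with $\rho_i$ composed with $H \hookrightarrow G$.

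\emph{Base case and direct products.} For $G = T_1 \times \cdots \times T_n \in \sd_1$, let each $T_i$ act freely, ergodically, p.m.p.\ on $(X_i,\mu_i)$ with treeable equivalence relation and let $G$ act coordinatewise on $X = \prod X_i$. Then $\calg = G \ltimes X \cong \prod(T_i \ltimes X_i)$ is already a product of treeable relations, so $Y = X$ and $\calr = \calg$ work with a length-zero (hence trivially weakly co-abelian) chain, and $\rho_i$ is the inclusion $T_i \hookrightarrow G$. For $G = G_1 \times G_2$ with each $G_j$ handled, take the product action, set $\calr = \calr^{(1)} \times \calr^{(2)}$ inside $\calg_1 \times \calg_2$, and concatenate the two weakly co-abelian chains by enlarging one factor at a time; each step remains co-abelian by the product structure.

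\emph{Central extension step.} Let $G$ be a central extension of $\Gamma \in \sd_n$ by $C$, and let $\Gamma \c (Z,\xi)$ carry data $(Y_0, \calr_\Gamma, \calr_1,\ldots,\calr_n)$ with chain $\calr_\Gamma = \cals_0 < \cdots < \cals_L = (\Gamma \ltimes Z)|_{Y_0}$. Let $G$ act on $Z$ through $q \colon G \to \Gamma$; then $\calg = G \ltimes Z$ has $C$ as a central subgroupoid with $\calg/C = \Gamma \ltimes Z$. Put $Y = Y_0$. The preimages $\tilde\cals_l := \theta^{-1}(\cals_l)$ form a chain $\tilde\calr_\Gamma = \tilde\cals_0 < \cdots < \tilde\cals_L = \calg|_{Y_0}$ whose steps are co-abelian with the same abelian quotients as in the original chain. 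To produce $\calr \subset \tilde\calr_\Gamma$ isomorphic to $\prod \calr_i$, apply Theorem \ref{thm-section} to each pullback $\tilde\calr_i := \theta^{-1}(\calr_i)$: since $\calr_i$ is a treeable equivalence relation, $\tilde\calr_i \cong C \times \calr_i$, yielding a splitting subgroupoid $\calh_i \cong \calr_i$ of $\tilde\calr_i$.

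\emph{Main obstacle.} The subgroupoids $\calh_i$ only commute modulo $C$ in $\tilde\calr_\Gamma$: the commutator bicharacters $\calr_i \times \calr_j \to C$ represent a part of the central $2$-cocycle of $\tilde\calr_\Gamma \to \calr_\Gamma$ that is independent of the choice of splittings, so the naive product $\calh_1 \cdots \calh_n$ is not in general a subgroupoid. I would resolve this by inserting one additional co-abelian layer $\calr \subset \calr \cdot C_0$ in the chain, where $C_0 \subset C$ is the subgroup generated by the commutator defects, and defining $\calr$ by iteratively splitting $C_0$-factors off the nested subgroupoids $\langle \calh_1,\ldots,\calh_k\rangle$ via further applications of Theorem \ref{thm-section}, so that the resulting $\calr$ is isomorphic to $\prod \calr_i$ and the commutator defects land entirely in the extra abelian step. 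The homomorphisms $\rho_i$ are then the composites of $\rho_i^\Gamma$ with the splittings, and the commuting triangles follow because $\calh_i$ sits inside $q^{-1}$ of the $i$-th factor of $\Gamma$ by construction.
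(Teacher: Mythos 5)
You correctly identify the inductive structure (reduce to $\sd$ via Lemma \ref{lem-c-d}, handle treeable groups and products directly, and isolate the central extension step), and you correctly identify the main obstacle: the lifts $\calh_i$ of the factors only commute modulo $C$, the defect being recorded by bicharacters $\calr_i\times\calr_j\to C$. But your proposed resolution of that obstacle does not work. You want to keep $\calr\cong\calr_1\times\cdots\times\calr_n$ with the \emph{original} $\calr_i$ and absorb the commutator defects into one extra co-abelian layer $\calr\subset\calr\cdot C_0$, producing $\calr$ by ``further applications of Theorem \ref{thm-section}'' to the nested subgroupoids $\langle\calh_1,\ldots,\calh_k\rangle$. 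Theorem \ref{thm-section} (equivalently Corollary \ref{cor-treeable}) requires the quotient to be a \emph{treeable} equivalence relation, and a product of two or more treeable relations is in general not treeable, so the theorem does not apply to $\langle\calh_1,\ldots,\calh_k\rangle\cdot C_0\to\calr_1\times\cdots\times\calr_k$. Worse, the obstruction is not merely technical: since the kernel $C_0$ is central, the commutator of lifts $\tilde g_i\tilde g_j\tilde g_i^{-1}\tilde g_j^{-1}\in C_0$ is independent of the choice of lifts, whereas in a genuine direct product lifts of elements from different factors commute. Hence whenever some bicharacter $\omega^{ij}$ is nontrivial (as happens already for the Heisenberg group over $\Z\times\Z$), \emph{no} subgroupoid of $\langle\calh_1,\ldots,\calh_n\rangle\cdot C_0$ maps isomorphically onto $\calr_1\times\cdots\times\calr_n$, and the object you are trying to construct does not exist.

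The paper resolves this differently, and this is the one idea your proposal is missing: instead of keeping the $\calr_i$, it \emph{shrinks} them. Setting $\omega^{ij}\colon\calr_i\to H^1(\calr_j,C)$ and $\cals_i=\bigcap_{j>i}\ker\omega^{ij}$ (with $\cals_n=\calr_n$), each $\cals_i$ is a co-abelian subrelation of $\calr_i$ (the image $\omega^{ij}(\calr_i)$ is countable abelian), hence still treeable by Gaboriau's \cite[Th\'eor\`eme IV.4]{gab-cost}; on $\cals_1\times\cdots\times\cals_n$ the commutator defects vanish, so the lifts do generate a subgroupoid $\calh$ isomorphic to that product. The statement of Theorem \ref{thm-action} is deliberately flexible enough to permit this replacement: the treeable relations appearing in the conclusion for $G$ are the $\cals_i$, not the $\calr_i$ coming from $\Gamma$, and the passage from $\calr_i$ to $\cals_i$ is accounted for by one more co-abelian step in the chain witnessing weak co-abelianity (namely $\cal{K}=\calh\cdot\cal{C}|_Y$ co-abelian in $\theta^{-1}(\calr)$ because $\prod_i\cals_i$ is co-abelian in $\prod_i\calr_i$). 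The rest of your outline (the reduction via induced actions for finite index, the product case, pulling back the weakly co-abelian chain through $\theta$, and lifting each single factor using vanishing of $H^2$ of a treeable relation) matches the paper's argument.
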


\begin{proof}
Let $\scb$ denote the class of discrete groups $G$ having an ergodic p.m.p.\ action $G\c (X, \mu)$ satisfying the condition in the theorem.
To prove $\scc \subset \scb$, it is enough to show the following assertions (i)--(iii):
\begin{enumerate}
\item[(i)] Any treeable group belongs to $\scb$.
\item[(ii)] The direct product of two groups in $\scb$ belongs to $\scb$.
\item[(iii)] Any central extension of a group in $\scb$ belongs to $\scb$.
\item[(iv)] Any group having a finite index subgroup in $\scb$ belongs to $\scb$.
\end{enumerate}
In fact, assertions (i)--(iii) imply $\sd \subset \scb$, and assertion (iv) and Lemma \ref{lem-c-d} imply $\scc \subset \scb$.

Assertions (i) and (ii) hold by the definition of $\scb$.

We prove assertion (iv).
Let $G$ be a discrete group and $H$ a finite index subgroup of $G$ with $H\in \scb$.
There exists an ergodic p.m.p.\ action $H\c (X, \mu)$ satisfying the condition in the theorem.
We can check that the action of $G$ induced from the action $H\c (X, \mu)$ satisfies the condition in the theorem.
Assertion (iv) follows.

We prove assertion (iii).
Let $1\to C\to G\to \Gamma \to 1$ be an exact sequence of discrete groups such that $C$ is central in $G$ and $\Gamma \in \scb$.
We have to show $G\in \scb$.
Since $\Gamma$ belongs to $\scb$, there exists an ergodic p.m.p.\ action $\Gamma \c (X, \mu)$ satisfying the following:
We set $\calq =\Gamma \ltimes (X, \mu)$, and define $\rho \colon \calq \to \Gamma$ as the projection.
There exist
\begin{itemize}
\item a measurable subset $Y$ of $X$ with $\calg Y=X$;
\item a weakly co-abelian subgroupoid $\calr$ of $\calq|_Y$;
\item a treeable equivalence relation $\calr_i$ on $(X_i, \mu_i)$ indexed by $i=1,\ldots, n$;
\item an isomorphism $f\colon \calr_1\times \cdots \times \calr_n\to \calr$; and
\item a homomorphism $\rho_i\colon \calr_i \to \Gamma$ indexed by $i=1,\ldots, n$
\end{itemize}
such that for any $i=1,\ldots, n$, the following diagram commutes:
\begin{equation}\label{diam}
\xymatrix{
\ar[d]_{p_i} \cali_1\times \cdots \times \cali_{i-1}\times \calr_i\times \cali_{i+1}\times \cdots \times \cali_n \ar[r]^{\hspace{7.8em}f} & \calr \ar[r]^{\rho} & \Gamma\\
\calr_i\ar[urr]_{\rho_i} & &\\  
}
\end{equation}
where $\cali_j$ is the trivial subrelation of $\calr_j$ for $j=1,\ldots, n$, and $p_i$ is the projection.
Let $G$ act on $(X, \mu)$ through the quotient map from $G$ onto $\Gamma$.
We set $\calg =G\ltimes (X, \mu)$, and denote by $\theta \colon \calg \to \calq$ the quotient map.
We get the exact sequence of discrete measured groupoids,
\[1\to \cal{C}\to \calg \stackrel{\theta}{\to} \calq \to 1,\]
where $\cal{C}$ is the subgroupoid of $\calg$ associated with the trivial action of $C$ on $(X, \mu)$.
Let $\tau \colon \calg \to G$ be the projection.

\medskip

\noindent {\bf An outline of the rest of the proof.}
We will show that $\calg$ satisfies the desired condition in the theorem.
For $i=1,\ldots, n$, we set
\[\overline{\calr}_i=\cali_1\times \cdots \times \cali_{i-1}\times \calr_i\times \cali_{i+1}\times \cdots \times \cali_n.\]
The subgroupoid $f(\overline{\calr}_i)$ of $\calq |_Y$ is liftable to a subgroupoid of $\calg|_Y$ because $\calr_i$ is treeable.
The subgroupoid $\calr =f(\calr_1\times \cdots \times \calr_n)$ is however not necessarily liftable.
We will show that there exists a co-abelian subrelation $\cals_i$ of $\calr_i$ for $i=1,\ldots, n$ such that $f(\cals_1\times \cdots \times \cals_n)$ is liftable to a subgroupoid of $\calg|_Y$, denoted by $\calh$.
Let $\cal{K}$ denote the subgroupoid of $\calg|_Y$ generated by $\cal{C}|_Y$ and $\calh$.
It follows that $\calh$ is co-abelian in $\cal{K}$, that $\cal{K}$ is co-abelian in $\theta^{-1}(\calr)$, that $\theta^{-1}(\calr)$ is weakly co-abelian in $\calg|_Y$ by assumption, and therefore that $\calh$ is weakly co-abelian in $\calg|_Y$.
We also have that $\calh$ is isomorphic to $\cals_1\times \cdots \times \cals_n$.
The relation $\cals_i$ is a subrelation of the treeable relation $\calr_i$, and is thus treeable thanks to Gaboriau \cite[Th\'eor\`eme IV.4]{gab-cost}.

\medskip

We now follow this outline.
Pick a section $u\colon \Gamma \to G$ of the quotient map from $G$ onto $\Gamma$.
Let $\sigma \colon \Gamma \times \Gamma \to C$ be the 2-cocycle associated with $u$.
It is defined by
\[\sigma(\gamma, \delta)u(\gamma \delta)=u(\gamma)u(\delta)\quad \textrm{for}\ \gamma, \delta \in \Gamma.\]
For $i=1,\ldots, n$, we define the 2-cocycle $\sigma_i\colon \calr_i^{(2)} \to C$ by
\[\sigma_i(g, h)=\sigma(\rho_i(g), \rho_i(h))\quad  \textrm{for}\ (g, h)\in \calr_i^{(2)}.\]
By Corollary \ref{cor-treeable}, there exists a measurable map $\varphi_i\colon \calr_i\to C$ such that
\[\sigma_i(g, h)=\varphi_i(gh)\varphi_i(g)^{-1}\varphi_i(h)^{-1}\quad \textrm{for\ a.e.}\ (g, h)\in \calr_i^{(2)}.\]
We then have the homomorphism $\tau_i \colon \calr_i\to G$ defined by
\[\tau_i(g)=\varphi_i(g)u(\rho_i(g))\quad \textrm{for}\ g\in \calr_i.\]
For $j=1,\ldots, n$ and $x_j\in X_j$, we set $e_{x_j}=(x_j, x_j)\in \cali_j$.
Let $f_0\colon X_1\times \cdots \times X_n\to Y$ be the isomorphism induced by $f$.
We define a map $F_i\colon \overline{\calr}_i\to \calg|_Y$ by
\begin{align}\label{fi}
F_i(e_{x_1},\ldots, e_{x_{i-1}}, g, e_{x_{i+1}},\ldots, e_{x_n})=(\tau_i(g), f_0(x_1,\ldots, x_n))
\end{align}
for $x_j\in X_j$ for $j=1,\ldots, n$ and $g\in \calr_i$ with its source $x_i$.
By commutativity of diagram (\ref{diam}), the range of the right hand side in equation (\ref{fi}) is $f_0(x_1,\ldots, x_{i-1}, y_i, x_{i+1},\ldots, x_n)$, where $y_i$ is the range of $g$.
The map $F_i$ is thus a homomorphism.
Let $\calg_i$ denote the image of $F_i$.
We have $\theta \circ F_i=f$ on $\overline{\calr}_i$, and also have the following commutative diagram:
\begin{equation}\label{comm-fi}
\xymatrix{
\ar[d]_{p_i} \overline{\calr}_i \ar[r]^{F_i} & \calg_i \ar[r]^{\tau} & G\\
\calr_i\ar[urr]_{\tau_i} & &\\  
}
\end{equation}
The groupoid $\calg_i$ is a lift of $f(\overline{\calr}_i)$, i.e., the restriction of $\theta$ to $\calg_i$ is an isomorphism onto $f(\overline{\calr}_i)$.
The map $\theta$ is however not necessarily injective on the subgroupoid of $\calg|_Y$ generated by $\calg_1,\ldots, \calg_n$.
We will find a co-abelian subgroupoid $\calh_i$ of $\calg_i$ for $i=1,\ldots, n$ such that $\theta$ is injective on the subgroupoid of $\calg|_Y$ generated by them.

Fix integers $i$, $j$ with $1\leq i<j\leq n$.
We define a homomorphism $\omega^{ij}\colon \calr_i\to H^1(\calr_j, C)$ by
\[\omega^{ij}_{g_i}(g_j)=\tau_j(g_j)^{-1}\tau_i(g_i)^{-1}\tau_j(g_j)\tau_i(g_i)\quad \textrm{for}\ g_i\in \calr_i\ \textrm{and}\ g_j\in \calr_j.\]
The element $\omega^{ij}_{g_i}(g_j)$ in fact belongs to $C$ because the product of elements of $\calg$,
\begin{align}\label{comm}
(\tau_j(g_j)^{-1}, \ast)(\tau_i(g_i)^{-1}, \ast)(\tau_j(g_j), \ast)(\tau_i(g_i), x),
\end{align}
belongs to $\cal{C}=\ker \theta$, where $x$ is any element of $Y$ such that the coordinates of $f_0^{-1}(x)$ in $X_i$ and $X_j$ are the sources of $g_i$ and $g_j$, respectively, and the elements of $Y$ put in ``$\ast$" is uniquely determined so that the product is well defined.

For a.e.\ $g_i\in \calr_i$, the map $\omega^{ij}_{g_i}\colon \calr_j\to C$ is a homomorphism.
In fact, for a.e.\ $(g_j, h_j)\in \calr_j^{(2)}$, we have
 \begin{align*}
 \omega^{ij}_{g_i}(g_jh_j) & =\tau_j(g_jh_j)^{-1}\tau_i(g_i)^{-1}\tau_j(g_jh_j)\tau_i(g_i)\\
& =\tau_j(h_j)^{-1}(\tau_j(g_j)^{-1}\tau_i(g_i)^{-1}\tau_j(g_j)\tau_i(g_i))\tau_i(g_i)^{-1}\tau_j(h_j)\tau_i(g_i)\\
& = \omega^{ij}_{g_i}(g_j)\omega^{ij}_{g_i}(h_j).
\end{align*}
The map $\omega^{ij}\colon \calr_i\to H^1(\calr_j, C)$ is a homomorphism.
In fact, for a.e.\ $(g_i, h_i)\in \calr_i^{(2)}$ and a.e.\ $g_j\in \calr_j$, we have
\begin{align*}
\omega^{ij}_{g_ih_i}(g_j) & =\tau_j(g_j)^{-1}\tau_i(g_ih_i)^{-1}\tau_j(g_j)\tau_i(g_ih_i)\\
& = \tau_j(g_j)^{-1}\tau_i(h_i)^{-1}\tau_j(g_j)(\tau_j(g_j)^{-1}\tau_i(g_i)^{-1}\tau_j(g_j)\tau_i(g_i))\tau_i(h_i)\\
& = \omega^{ij}_{g_i}(g_j)\omega^{ij}_{h_i}(g_j).
\end{align*}

We obtained a homomorphism $\omega^{ij}\colon \calr_i\to H^1(\calr_j, C)$.
The image of this homomorphism depends only on $\tau_i(g_i)\in G$.
More precisely, for a.e.\ $g_i, h_i\in \calr_i$ with $\tau_i(g_i)=\tau_i(h_i)$, we have $\omega^{ij}_{g_i}=\omega^{ij}_{h_i}$.
It follows that the image $\omega^{ij}(\calr_i)$ is countable.

We are ready to show that the action $G\c (X, \mu)$ is desirable, i.e., there exist a weakly co-abelian subgroupoid of $\calg|_Y$, a treeable equivalence relation, etc.\ in the theorem.
We set $\cals_n=\calr_n$.
For $i=1,\ldots, n-1$, we set $\cals_i=\bigcap_{j=i+1}^n\ker \omega^{ij}$.
The subrelation $\cals_i$ is co-abelian in $\calr_i$ because $\omega^{ij}(\calr_i)$ is countable and abelian.
By \cite[Th\'eor\`eme IV.4]{gab-cost}, the relation $\cals_i$ is treeable.
For $i=1,\ldots, n$, we set
\[\calh_i=F_i(\cali_1\times \cdots \times \cali_{i-1}\times \cals_i\times \cali_{i+1}\times \cdots \times \cali_n).\]
Let $\calh$ be the subgroupoid of $\calg|_Y$ generated by $\calh_1,\ldots, \calh_n$.
We have the homomorphism $F\colon \cals_1\times \cdots \times \cals_n\to \calh$ that extends the isomorphisms $F_1,\ldots, F_n$ because the product in (\ref{comm}) is the unit of $\calg$ at $x$ for a.e.\ $g_i\in \cals_i$ and $g_j\in \cals_j$.
The map $F$ is surjective by the definition of $\calh$, and is injective because $\theta \circ F=f$ on $\cals_1\times \cdots \times \cals_n$.
It follows that $F$ is an isomorphism.
For any $i=1,\ldots, n$, commutativity of diagram (\ref{comm-fi}) implies commutativity of the following diagram:
\[\xymatrix{
\ar[d]_{p_i} \cali_1\times \cdots \times \cali_{i-1}\times \cals_i\times \cali_{i+1}\times \cdots \times \cali_n \ar[r]^{\hspace{7.8em} F} & \calh \ar[r]^{\tau} & G\\
\cals_i\ar[urr]_{\tau_i} & &\\  
}\]

The remaining task is to prove that $\calh$ is weakly co-abelian in $\calg|_Y$.
We denote by $\cal{K}$ the subgroupoid of $\calg|_Y$ generated by $\calh$ and $\cal{C}|_Y$.
We have the isomorphism from $C\times \calh$ onto $\cal{K}$ sending $(c, h)$ to $ch$ for $c\in C$ and $h\in \calh$.
The subgroupoid $\calh$ is therefore co-abelian in $\cal{K}$ with $C$ the quotient.
The subgroupoid $\cal{K}$ is co-abelian in $\theta^{-1}(\calr)$ because $\cals_1\times \cdots \times \cals_n$ is co-abelian in $\calr_1\times \cdots \times \calr_n$.
The subgroupoid $\theta^{-1}(\calr)$ is weakly co-abelian in $\calg|_Y$ because $\calr$ is weakly co-abelian in $\calq|_Y$.
It follows that $\calh$ is weakly co-abelian in $\calg|_Y$.

We proved that $G$ belongs to $\scb$.
Assertion (iii) follows.
\end{proof}

\begin{proof}[Proof of Theorem \ref{thm-h}]
Let $G$ be a group in $\scc$.
We show that $G$ has the HAP.
By Theorem \ref{thm-action}, there exists an ergodic p.m.p.\ action $G\c (X, \mu)$ such that, setting $\calg =G\ltimes (X, \mu)$, we have a measurable subset $Y$ of $X$ with $\calg Y=X$ and a weakly co-abelian subgroupoid $\calr$ of $\calg|_Y$ isomorphic to the direct product of finitely many treeable equivalence relations.
By \cite[Theorem 3]{ana-h} or \cite[Proposition 6]{ueda}, any treeable equivalence relation has the HAP.
It follows that $\calr$ has the HAP, and so do $\calg|_Y$ and $\calg$ by Propositions \ref{prop-hap-ext} and \ref{prop-hap-rest}.
The group $G$ therefore has the HAP by Proposition \ref{prop-group-oid}.

We next suppose that $G$ is a discrete group and $C$ is a central subgroup of $G$ such that $G/C$ belongs to $\scc$.
We show that the pair $(G, C)$ has the gHAP.
In the end of the proof of Theorem \ref{thm-action}, letting $\Gamma$ be $G/C$, we obtained an ergodic p.m.p.\ action $G\c (X, \mu)$ such that, setting $\calg =G\ltimes (X, \mu)$ and $\cal{C}=C\ltimes (X, \mu)$, we have the following three conditions:
\begin{itemize}
\item The group $C$ acts on $(X, \mu)$ trivially.
\item There exist a measurable subset $Y$ of $X$ with $\calg Y=X$, a weakly co-abelian subgroupoid $\cal{K}$ of $\calg|_Y$, and a subgroupoid $\calh$ of $\cal{K}$ such that $\cal{K}$ is generated by $\cal{C}|_Y$ and $\calh$; and the map from $C\times \calh$ into $\cal{K}$ sending $(c, h)$ to $ch$ for $c\in C$ and $h\in \calh$ is an isomorphism.
\item The groupoid $\calh$ is isomorphic to the direct product of finitely many treeable equivalence relations, and therefore has the HAP.
\end{itemize}
Let $\iota \colon \cal{C}\to \calg$ denote the inclusion.
These three conditions imply that the pair $(\cal{K}, (C, \iota))$ has the gHAP.
Applying Propositions \ref{prop-ghap-ext}, \ref{prop-ghap-rest} and \ref{prop-group-oid} in this order, we see that the pairs $(\calg|_Y, (C, \iota))$, $(\calg, (C, \iota))$ and $(G, C)$ have the gHAP.
\end{proof}


\appendix

\section{Comparison with the HAPs of Jolissaint and Anantharaman-Delaroche}\label{sec-app}

The HAP of a p.m.p.\ discrete measured equivalence relation was introduced by Jolissaint \cite{jol-h}, who proved that it has his HAP if and only if the associated von Neumann algebra has property H relative to the Cartan subalgebra, in the sense of Popa \cite{popa}.
The HAP of a discrete measured groupoid was introduced by Anantharaman-Delaroche \cite{ana-h}, who proved that it has her HAP if and only if the associated von Neumann algebra has the Haagerup property relative to the Cartan subalgebra, in the sense of Boca \cite{boca}.
In this appendix, we compare their HAPs and our HAP in Definition \ref{defn-hap}.
We refer to \cite[Proposition 2.2]{popa} for comparison between the above properties of Popa and Boca.
We also refer to \cite{cs}, \cite{cost}, \cite{jol-h-vn} and \cite{ot} and references therein for historical background on the HAP of von Neumann algebras and recent approaches to introduce the HAP for arbitrary von Neumann algebras.

Jolissaint's HAP (\cite[Definition 1.2]{jol-h}) can naturally be extended for a discrete measured groupoid as follows:

\begin{defn}\label{defn-hap-jol}
Let $\calg$ be a discrete measured groupoid on a standard probability space $(X, \mu)$.
We say that $\calg$ has the {\it Jolissaint Haagerup-property (JHAP)} if there exists a sequence $(\phi_n)_n$ of positive definite functions on $\calg$ such that
\begin{itemize}
\item for any $n$ and a.e.\ $x\in X$, we have $\phi_n(e_x)\leq 1$, where $e_x$ is the unit of $\calg$ at $x$;
\item for any $n$ and any $\varepsilon >0$, we have $\tilde{\mu}(\{ \, g\in \calg \mid |\phi_n(g)|>\varepsilon \, \})<\infty$; and
\item for a.e.\ $g\in \calg$, we have $\phi_n(g)\to 1$ as $n\to \infty$.
\end{itemize}
\end{defn}

The following is due to Anantharaman-Delaroche (\cite[Definition 8]{ana-h}):

\begin{defn}\label{defn-hap-ad}
Let $\calg$ be a discrete measured groupoid on a standard probability space $(X, \mu)$.
We say that $\calg$ has the {\it Anantharaman-Delaroche Haagerup-property (ADHAP)} if there exists a sequence $(\phi_n)_n$ of positive definite functions on $\calg$ such that
\begin{itemize}
\item for any $n$ and a.e.\ $x\in X$, we have $\phi_n(e_x)=1$;
\item for any $n$ and any $\varepsilon >0$, we have $\tilde{\mu}(\{ \, g\in \calg \mid |\phi_n(g)|>\varepsilon \, \})<\infty$; and
\item for a.e.\ $g\in \calg$, we have $\phi_n(g)\to 1$ as $n\to \infty$.
\end{itemize}
\end{defn}

The difference between the JHAP and the ADHAP is only the normalized condition for positive definite functions.
The difference between the ADHAP and our HAP in Definition \ref{defn-hap} is only the $c_0$-property of positive definite functions.
It is obvious from definition that the ADHAP implies the JHAP, and that the ADHAP implies our HAP.

\begin{rem}
It does not seem obvious from the above definitions that the JHAP and the ADHAP depend only on the class of the measure $\mu$.
Through his characterization of the JHAP in terms of von Neumann algebras, Jolissaint observed that the JHAP for p.m.p.\ discrete measured equivalence relations depends only on the class of $\mu$ (\cite[Corollary 2.7]{jol-h}).
Similarly, through her characterization of the ADHAP in terms of von Neumann algebras, Anantharaman-Delaroche observed that the ADHAP depends only on the class of $\mu$ (see the paragraph right after \cite[Definition 8]{ana-h}).

In contrast, it follows from definition that our HAP in Definition \ref{defn-hap} depends only on the class of $\mu$.
Applying Corollary \ref{cor-hap} below, we can also deduce that the JHAP and the ADHAP for discrete measured groupoids depends only on the class of $\mu$.
\end{rem}

\begin{lem}\label{lem-ad-hap}
Let $\calg$ be a discrete measured groupoid on a standard probability space $(X, \mu)$.
Then $\calg$ has the ADHAP if and only if $\calg$ has the HAP in Definition \ref{defn-hap}.\end{lem}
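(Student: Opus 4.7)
The plan is to exploit that one direction is essentially immediate and focus the work on the other. For ADHAP $\Rightarrow$ HAP, any ADHAP sequence $(\phi_n)$ already consists of normalized positive definite functions, and the ADHAP integrability condition says precisely that $\tilde{\mu}(\{g\in\calg : |\phi_n(g)|>\varepsilon\})<\infty$; by Remark \ref{rem-pdf} (taking $Y=X$) this gives the $c_0$-property in our sense, so $(\phi_n)$ witnesses the HAP.

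For the nontrivial direction HAP $\Rightarrow$ ADHAP, start from a HAP sequence $(\phi_n)$ and construct an ADHAP sequence $(\phi_n')$. Using Remark \ref{rem-pdf}, for each $n$ pick a measurable $Y_n\subset X$ with $\mu(X\setminus Y_n)<2^{-n}$ such that $\tilde{\mu}(\{g\in\calg|_{Y_n}:|\phi_n(g)|>\varepsilon\})<\infty$ for every $\varepsilon>0$. The idea is to modify $\phi_n$ into a positive definite function on $\calg$ that agrees with $\phi_n$ on $\calg|_{Y_n}$, equals $1$ on every unit of $\calg$, and vanishes elsewhere; this turns the partial integrability into a global one at the cost of modifying behaviour over a set of $\mu$-measure at most $2^{-n}$.

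To obtain such a modification in a way that plugs into Lemma \ref{lem-rest-pdf}, set $\cals_n = \calg|_{Y_n} \sqcup \{\, e_x : x\in X\setminus Y_n\,\}$ and check by a short case analysis on composable pairs that $\cals_n$ is a measurable subgroupoid of $\calg$ with unit space $X$ (the only nontrivial point being that no element of $\cals_n$ can connect $Y_n$ to its complement). Define $\tilde{\phi}_n$ on $\cals_n$ to be $\phi_n$ on $\calg|_{Y_n}$ and $1$ on the isolated units. Positive definiteness of $\tilde{\phi}_n$ on $\cals_n$ then splits according to whether the base point $x$ lies in $Y_n$ or not: in the first case $\cals_n^x=(\calg|_{Y_n})^x$ and the claim reduces to positive definiteness of $\phi_n$ on $\calg$, and in the second case $\cals_n^x=\{e_x\}$ with $\tilde{\phi}_n(e_x)=1\ge 0$. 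Lemma \ref{lem-rest-pdf} then extends $\tilde{\phi}_n$ by zero to a positive definite function $\phi_n'$ on $\calg$.

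Finally, verify the ADHAP clauses for $(\phi_n')$. Normalization holds because $\tilde{\phi}_n(e_x)=1$ on both pieces. For the integrability clause, $\{g\in\calg:|\phi_n'(g)|>\varepsilon\}\subset\{g\in\calg|_{Y_n}:|\phi_n(g)|>\varepsilon\}\cup\{\,e_x:x\in X\setminus Y_n\,\}$, where the first set has finite $\tilde{\mu}$-measure by the choice of $Y_n$ and the second has $\tilde{\mu}$-measure $\mu(X\setminus Y_n)<2^{-n}$. For pointwise convergence, $\sum_n\mu(X\setminus Y_n)<\infty$ and Borel--Cantelli give a $\mu$-conull set of $x$ that eventually lie in $Y_n$; using $\tilde{\mu}\sim\tilde{\mu}^{-1}$ and the identity $r^{-1}(A)^{-1}=s^{-1}(A)$, we conclude that for $\tilde{\mu}$-a.e.\ $g$, both $r(g)$ and $s(g)$ eventually lie in $Y_n$, so $\phi_n'(g)=\phi_n(g)$ for all large $n$ and hence $\phi_n'(g)\to 1$. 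The main obstacle is the construction of $\cals_n$ and the verification that $\tilde{\phi}_n$ is positive definite on it; once the right disjoint decomposition is identified both are short case checks, after which Lemma \ref{lem-rest-pdf} and elementary measure theory finish the argument.
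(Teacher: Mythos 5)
Your proposal is correct and follows essentially the same route as the paper: both directions are handled identically, with the nontrivial implication proved by restricting $\phi_n$ to $\calg|_{Y_n}$ for a good subset $Y_n$ of comeasure summable in $n$, setting the function to $1$ on the units over $X\setminus Y_n$ and to $0$ elsewhere, invoking Lemma \ref{lem-rest-pdf} for positive definiteness, and finishing with Borel--Cantelli. The only cosmetic difference is that you obtain the global finiteness of $\tilde{\mu}(\{g\in\calg|_{Y_n} : |\phi_n(g)|>\varepsilon\})$ directly from Remark \ref{rem-pdf}, whereas the paper rederives it via a uniform fiberwise bound $M_{n,\varepsilon}$; you are also more explicit than the paper about the subgroupoid $\cals_n$ implicit in the application of Lemma \ref{lem-rest-pdf}.
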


\begin{proof}
We have already mentioned that the ``only if" part follows by definition.
We prove the ``if" part.
Suppose that $\calg$ has the HAP in Definition \ref{defn-hap}.
There exists a sequence $(\phi_n)_{n\in \N}$ of normalized positive definite $c_0$-functions on $\calg$ such that for a.e.\ $g\in \calg$, we have $\phi_n(g)\to 1$ as $n\to \infty$.

Fix $n\in \N$.
There exists a measurable subset $X_n$ of $X$ such that $\mu(X\setminus X_n)<1/n^2$; and for any $\varepsilon >0$, there exists a number $M_{n, \varepsilon}\in \N$ such that
\[|\{ \, g\in (\calg|_{X_n})_x\mid |\phi_n(g)|>\varepsilon \, \} |\leq M_{n, \varepsilon}\quad \textrm{for\ a.e.}\ x\in X_n.\]
We define a function $\psi_n\colon \calg \to \C$ by $\psi_n(g)=\phi_n(g)$ if $g\in \calg|_{X_n}$; $\psi_n(g)=1$ if $g=e_x$ for some $x\in X\setminus X_n$; and $\psi_n(g)=0$ otherwise.
The function $\psi_n$ is positive definite on $\calg$ by Lemma \ref{lem-rest-pdf}, and is normalized.
For a.e.\ $g\in \calg$, we have $\psi_n(g)\to 1$ as $n\to \infty$ because $\psi_n=\phi_n$ on $\calg|_{X_n}$ and $\sum_{n=1}^\infty \mu(X\setminus X_n)<\infty$.

We fix $n\in \N$ again.
Pick a number $\varepsilon$ with $0<\varepsilon <1$.
We have
\[\tilde{\mu}(\{ \, g\in \calg \mid |\psi_n(g)|>\varepsilon \, \} )=\mu(X\setminus X_n)+\tilde{\mu}(\{ \, g\in \calg|_{X_n}\mid |\psi_n(g)|>\varepsilon \, \} )\]
and
\[\tilde{\mu}(\{ \, g\in \calg|_{X_n} \mid |\psi_n(g)|>\varepsilon \, \} ) =\int_{X_n}|\{ \, g\in (\calg|_{X_n})_x \mid |\psi_n(g)|>\varepsilon \, \}|\, d\mu(x) \leq M_{n, \varepsilon}\mu(X_n).\]
We thus have $\tilde{\mu}(\{ \, g\in \calg \mid |\psi_n(g)|>\varepsilon \, \} )<\infty$.

We proved that the sequence $(\psi_n)_n$ satisfies the condition in Definition \ref{defn-hap-ad}.
It follows that $\calg$ has the ADHAP.
\end{proof}

\begin{lem}\label{lem-jol-hap}
Let $\calg$ be a discrete measured groupoid on a standard probability space $(X, \mu)$.
Then $\calg$ has the JHAP if and only if $\calg$ has the HAP in Definition \ref{defn-hap}.
\end{lem}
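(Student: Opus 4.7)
The plan is to prove the two directions separately, reducing to Lemma \ref{lem-ad-hap}.

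For the direction ``HAP implies JHAP'', I would simply invoke Lemma \ref{lem-ad-hap} to pass from our HAP to the ADHAP, and then observe that the ADHAP trivially implies the JHAP: a sequence $(\phi_n)_n$ satisfying $\phi_n(e_x)=1$ a.e.\ automatically satisfies $\phi_n(e_x)\leq 1$ a.e., while the other two conditions of Definition \ref{defn-hap-jol} are identical to those of Definition \ref{defn-hap-ad}.

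The substantive direction is ``JHAP implies HAP'', for which I would normalize a JHAP-sequence as follows. Given a sequence $(\phi_n)_n$ of positive definite functions witnessing the JHAP, define
\[\psi_n(g)=\phi_n(g)+\chi_{\Delta}(g)\bigl(1-\phi_n(e_{r(g)})\bigr),\quad g\in \calg,\]
where $\Delta \subset \calg$ is the unit space and $\chi_\Delta$ its characteristic function. By Remark \ref{rem-pdf-basic} (i) combined with the JHAP assumption $\phi_n(e_x)\leq 1$, we have $\phi_n(e_x)\in [0,1]$ a.e., so the correction term is non-negative. The key point is that $\psi_n$ is again positive definite: for a.e.\ $x\in X$ and any distinct $g_1,\ldots,g_k\in \calg^x$ and any $\lambda_1,\ldots,\lambda_k\in \C$, the sum $\sum_{i,j}\bar{\lambda}_i\lambda_j\psi_n(g_i^{-1}g_j)$ splits as $\sum_{i,j}\bar{\lambda}_i\lambda_j\phi_n(g_i^{-1}g_j)+(1-\phi_n(e_x))\sum_i|\lambda_i|^2$, since $g_i^{-1}g_j$ is a unit if and only if $i=j$, in which case its range equals $x$. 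Both summands are non-negative. By construction $\psi_n(e_x)=1$ a.e., and for a.e.\ non-unit $g\in \calg$ we have $\psi_n(g)=\phi_n(g)\to 1$ as $n\to \infty$, while $\psi_n(e_x)=1\to 1$ trivially, so pointwise convergence to $1$ a.e.\ holds. Finally, for any $\varepsilon \in (0,1)$,
\[\{\, g\in \calg \mid |\psi_n(g)|>\varepsilon \, \} \subset \Delta \cup \{\, g\in \calg \mid |\phi_n(g)|>\varepsilon \, \},\]
and $\tilde{\mu}(\Delta)=\mu(X)=1$ while the second set has finite $\tilde{\mu}$-measure by the JHAP. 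Hence $\tilde{\mu}(\{|\psi_n|>\varepsilon\})<\infty$, so $(\psi_n)_n$ witnesses the ADHAP. Applying Lemma \ref{lem-ad-hap} yields the HAP in the sense of Definition \ref{defn-hap}.

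The only non-routine point is the verification of positive definiteness of $\psi_n$, which I expect to be the main (though mild) obstacle: the construction must avoid adding a term that couples different points. Using the unit-space indicator is the natural way to do this, since the product $g_i^{-1}g_j$ is a unit exactly when $g_i=g_j$ in a fixed fiber $\calg^x$, reducing the correction to a scalar multiple of $\sum_i|\lambda_i|^2$ that is controlled by the JHAP normalization $\phi_n(e_x)\leq 1$.
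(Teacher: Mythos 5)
Your proof is correct, but the substantive direction (JHAP implies HAP) is handled differently from the paper. The paper renormalizes multiplicatively: it sets $X_n=\{x : \phi_n(e_x)>0\}$ and defines $\psi_n(g)=\phi_n(g)/(\phi_n(e_{r(g)})^{1/2}\phi_n(e_{s(g)})^{1/2})$ on $\calg|_{X_n}$, extended by $1$ on the remaining units and $0$ elsewhere; since dividing by small diagonal values can inflate $|\psi_n|$, the paper must then re-verify the $c_0$-property by restricting to sets where $\phi_n(e_x)>c$, and it goes directly to Definition \ref{defn-hap} rather than through the ADHAP. Your additive correction $\psi_n=\phi_n+\chi_\Delta\cdot(1-\phi_n(e_{r(\cdot)}))$ sidesteps both issues: there is no division by possibly vanishing diagonal values, and $|\psi_n|=|\phi_n|$ off the unit space, so the finiteness of $\tilde{\mu}(\{|\psi_n|>\varepsilon\})$ is immediate from $\tilde{\mu}(\Delta)=1$ and the JHAP bound. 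Note that your correction term is exactly an instance of Lemma \ref{lem-rest-pdf} applied to the trivial subgroupoid $\Delta$, on which positive definiteness of $y\mapsto 1-\phi_n(e_y)$ is just pointwise non-negativity, guaranteed by $\phi_n(e_y)\le 1$ a.e.; citing that lemma would shorten your verification. Your route through the ADHAP and Lemma \ref{lem-ad-hap} is also legitimate, since the direction you need (ADHAP implies the HAP of Definition \ref{defn-hap}) is the easy one.

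One small inaccuracy to fix: for distinct $g_1,\ldots,g_k\in\calg^x$, the product $g_i^{-1}g_i$ is the unit $e_{s(g_i)}$, whose range is $s(g_i)$, not $x$. Hence the diagonal contribution is $\sum_i|\lambda_i|^2\bigl(1-\phi_n(e_{s(g_i)})\bigr)$ rather than $(1-\phi_n(e_x))\sum_i|\lambda_i|^2$. This does not affect your conclusion, since each summand is separately non-negative (for a.e.\ $x$, every $g\in\calg^x$ has $s(g)$ outside the null set where $\phi_n(e_\cdot)>1$, because the saturation of a null set is null), but the statement as written is not correct as an identity.
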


\begin{proof}
Suppose that $\calg$ has the JHAP.
There exists a sequence $(\phi_n)_{n\in \N}$ of positive definite functions on $\calg$ satisfying the condition in Definition \ref{defn-hap-jol}.
Fix $n\in \N$.
We set
\[X_n=\{ \, x\in X\mid \phi_n(e_x)>0\, \}\]
and define a function $\psi_n\colon \calg \to \C$ by
\[\psi_n(g)=\begin{cases}
\displaystyle \frac{\phi_n(g)}{\, \phi_n(e_{r(g)})^{1/2}\phi_n(e_{s(g)})^{1/2}\, } & \textrm{if}\ g\in \calg|_{X_n}\\
1 & \textrm{if}\ g=e_x \ \textrm{for\ some}\ x\in X\setminus X_n\\
0 & \textrm{otherwise}.
\end{cases}
\]
The function $\psi_n$ is positive definite by Lemma \ref{lem-rest-pdf}, and is normalized by definition.

We claim that $\psi_n$ is $c_0$.
To prove it, pick $\delta >0$ and $\varepsilon >0$.
There exist a measurable subset $Y$ of $X_n$ and $c>0$ such that $\mu(X_n\setminus Y)<\delta$ and for a.e.\ $x\in Y$, we have $\phi_n(e_x)>c$.
We set $Z=Y\cup (X\setminus X_n)$.
We have $\mu(X\setminus Z)<\delta$.
For a.e.\ $g\in \calg|_Z$ with $|\psi_n(g)|>\varepsilon$, we have either $g=e_x$ for some $x\in X\setminus X_n$ or $g\in \calg|_Y$ and
\[|\phi_n(g)|=\phi_n(e_{r(g)})^{1/2}\phi_n(e_{s(g)})^{1/2}|\psi_n(g)|>c\varepsilon.\]
We thus have
\[\tilde{\mu}(\{ \, g\in \calg|_Z\mid |\psi_n(g)|>\varepsilon \, \})\leq \mu(X\setminus X_n)+\tilde{\mu}(\{ \, g\in \calg|_Y\mid |\phi_n(g)|>c\varepsilon \, \})<\infty.\]
The claim follows.

For a.e.\ $g\in \calg$, we have $\psi_n(g)\to 1$ as $n\to \infty$ by the definition of $\psi_n$.
It follows that $\calg$ has the HAP in Definition \ref{defn-hap}.

We proved that the ``only if" part of the lemma.
The ``if" part follows because the HAP in Definition \ref{defn-hap} implies the ADHAP by Lemma \ref{lem-ad-hap}, and the ADHAP implies the JHAP by definition.
\end{proof}

Combining Lemmas \ref{lem-ad-hap} and \ref{lem-jol-hap}, we obtain the following:

\begin{cor}\label{cor-hap}
For any discrete measured groupoid, all of the JHAP, the ADHAP and the HAP in Definition \ref{defn-hap} are equivalent.
\end{cor}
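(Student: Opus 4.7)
The plan is to deduce the corollary directly from Lemmas \ref{lem-ad-hap} and \ref{lem-jol-hap}. These two lemmas together assert that the HAP in Definition \ref{defn-hap} is equivalent to each of the JHAP and the ADHAP; chaining the two equivalences then yields that all three properties are mutually equivalent, which is precisely the content of the corollary.

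Since the corollary is a one-step combination of the two preceding lemmas, there is essentially no obstacle at this stage. The substantive content lies in the lemmas themselves. In Lemma \ref{lem-ad-hap}, the passage from our HAP to the ADHAP proceeds by restricting each normalized positive definite $c_0$-function $\phi_n$ to a measurable subset $X_n\subset X$ with $\mu(X\setminus X_n)<1/n^2$ on which the $c_0$-property becomes uniform in the sense of a global bound $M_{n,\varepsilon}$, and then extending by $0$ to all of $\calg$ via Lemma \ref{lem-rest-pdf} to obtain an integrable positive definite function. In Lemma \ref{lem-jol-hap}, the passage from the JHAP to our HAP is effected by renormalizing each $\phi_n$ through division by $\phi_n(e_{r(g)})^{1/2}\phi_n(e_{s(g)})^{1/2}$ (valid where this quantity is positive) and extending appropriately; the $c_0$-property of the resulting function then follows from the integrability of $\phi_n$ combined with Remark \ref{rem-pdf}.

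Writing out the proof of the corollary itself therefore amounts to a single sentence citing both lemmas: each lemma supplies one equivalence with the HAP of Definition \ref{defn-hap} as the common pivot, and the third equivalence JHAP $\Leftrightarrow$ ADHAP is obtained by composition. No additional verification is required, as each lemma already states a genuine ``if and only if".
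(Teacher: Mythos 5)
Your proposal is correct and follows exactly the paper's route: the corollary is obtained by combining Lemma \ref{lem-ad-hap} and Lemma \ref{lem-jol-hap}, each of which establishes an equivalence with the HAP of Definition \ref{defn-hap}, and the remaining equivalence follows by transitivity. Your summary of the internal mechanisms of the two lemmas is also accurate, though not needed for the corollary itself.
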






\begin{thebibliography}{99}

\bibitem{ad}S. Adams, Trees and amenable equivalence relations, {\it Ergodic Theory Dynam. Systems} {\bf 10} (1990), 1--14.

\bibitem{al}J. Alonso, Measure free factors of free groups, \textit{Groups Geom. Dyn.} \textbf{8} (2014), 1--21.

\bibitem{ana-t}C. Anantharaman-Delaroche, Cohomology of property T groupoids and applications, {\it Ergodic Theory Dynam. Systems} {\bf 25} (2005), 977--1013.

\bibitem{ana-h}C. Anantharaman-Delaroche, The Haagerup property for discrete measured groupoids, in {\it Operator algebra and dynamics}, 1--30, Springer Proceedings in Mathematics \& Statistics, 58, Springer-Verlag, Berlin, Heidelberg, 2013.

\bibitem{ar-book}C. Anantharaman-Delaroche and J. Renault, {\it Amenable groupoids}, Monogr. Enseign. Math., 36, Enseignement Math., Geneva, 2000.

\bibitem{arv}W. Arveson, {\it An invitation to $C^*$-algebras}, Grad. Texts in Math., 39, Springer-Verlag, New York-Heidelberg, 1976.

\bibitem{boca}F. Boca, On the method of constructing irreducible finite index subfactors of Popa, {\it Pacific J. Math.} {\bf 161} (1993), 201--231. 

\bibitem{btw}M. R. Bridson, M. Tweedale, and H. Wilton, Limit groups, positive-genus towers and measure-equivalence, \textit{Ergodic Theory Dynam. Systems} \textbf{27} (2007), 703--712.

\bibitem{cs}M. Caspers and A. Skalski, The Haagerup property for arbitrary von Neumann algebras, preprint, to appear in \textit{Int. Math. Res. Not. IMRN}, arXiv:1312.1491.

\bibitem{cost}M. Caspers, R. Okayasu, A. Skalski, and R. Tomatsu, Generalisations of the Haagerup approximation property to arbitrary von Neumann algebras, \textit{C. R. Acad. Sci. Paris} \textbf{352} (2014), 507--510.

\bibitem{ccjjv}P.-A. Cherix, M. Cowling, P. Jolissaint, P. Julg, and A. Valette, {\it Groups with the Haagerup property. Gromov's a-T-menability}, Progr. Math., 197, Birkh\"auser Verlag, Basel, 2001.

\bibitem{fm1}J. Feldman and C. C. Moore, Ergodic equivalence relations, cohomology, and von Neumann algebras. I, {\it Trans. Amer. Math. Soc.} {\bf 234} (1977), 289--324.

\bibitem{fmw}D. Fisher, D. W. Morris, and K. Whyte, Nonergodic actions, cocycles and superrigidity, {\it New York J. Math.} {\bf 10} (2004), 249--269.

\bibitem{gab-cost}D. Gaboriau, Co\^ut des relations d'\'equivalence et des groupes, {\it Invent. Math.} {\bf 139} (2000), 41--98.

\bibitem{gab-ex}D. Gaboriau, Examples of groups that are measure equivalent to the free group, \textit{Ergodic Theory Dynam. Systems} \textbf{25} (2005), 1809--1827.

\bibitem{h}U. Haagerup, An example of a nonnuclear $C^*$-algebra, which has the metric approximation property, {\it Invent. Math.} {\bf 50} (1979), 279--293. 

\bibitem{hj}G. Hjorth, A lemma for cost attained, {\it Ann. Pure Appl. Logic} {\bf 143} (2006), 87--102.

\bibitem{jol-h-vn}P. Jolissaint, Haagerup approximation property for finite von Neumann algebras, {\it J. Operator Theory} {\bf 48} (2002), 549--571.

\bibitem{jol-h}P. Jolissaint, The Haagerup property for measure-preserving standard equivalence relations, {\it Ergodic Theory Dynam. Systems} {\bf 25} (2005), 161--174.

\bibitem{kec-set}A. S. Kechris, {\it Classical descriptive set theory}, Grad. Texts in Math., 156, Springer-Verlag, New York, 1995.

\bibitem{kida-srt}Y. Kida, Stable actions of central extensions and relative property (T), preprint, to appear in \textit{Israel J. Math.}, arXiv:1309.3739.

\bibitem{ot}R. Okayasu and R. Tomatsu, Haagerup approximation property for arbitrary von Neumann algebras, preprint, arXiv:1312.1033.

\bibitem{popa}S. Popa, On a class of type ${\rm II}_1$ factors with Betti numbers invariants, {\it Ann. of Math. (2)} {\bf 163} (2006), 809--899.

\bibitem{ramsay}A. Ramsay, Virtual groups and group actions, {\it Adv. Math.} {\bf 6} (1971), 253--322.

\bibitem{series}C. Series, An application of groupoid cohomology, {\it Pacific J. Math.} {\bf 92} (1981), 415--432. 

\bibitem{ueda}Y. Ueda, Notes on treeability and costs for discrete groupoids in operator algebra framework, in {\it Operator Algebras: The Abel Symposium 2004}, 259--279, Abel Symp., 1, Springer, Berlin, 2006.

\bibitem{west1}J. J. Westman, Cohomology for ergodic groupoids, {\it Trans. Amer. Math. Soc.} {\bf 146} (1969), 465--471.

\bibitem{west2}J. J. Westman, Cohomology for the ergodic actions of countable groups, {\it Proc. Amer. Math. Soc.} {\bf 30} (1971), 318--320.

\bibitem{wil}D. P. Williams, {\it Crossed products of $C^*$-algebras}, Math. Surveys Monogr., 134, Amer. Math. Soc., Providence, RI, 2007.

\end{thebibliography}
\end{document}